\def\Prim{\operatorname{Prim}}
\def\Glimm{\operatorname{Glimm}}
\def\Min{\operatorname{Min}}
\def\Primal{\operatorname{Primal}}
\def\Orc{\operatorname{Orc}}
\def\Sub{\operatorname{Sub}}
\def\C{\mathbb{C}}
\def\T{\mathbb{T}}
\def\Z{\mathbb{Z}}
\def\R{\mathbb{R}}
\def\NN{\mathbb{N}}
\newtheorem{thm}{Theorem}[section]
\newtheorem{cor}[thm]{Corollary}
\newtheorem{prop}[thm]{Proposition}
\newtheorem{lemma}[thm]{Lemma}
\theoremstyle{definition}
\numberwithin{equation}{section}
\title[Norms of inner derivations for multiplier algebras]{Norms of inner derivations for multiplier algebras of $C^*$-algebras
and group $C^*$-algebras, II.}
\thanks{The authors are grateful to the London Mathematical Society
for grant number 4919, which partially supported a research visit by
E. Kaniuth to the University of Aberdeen, and they are also grateful to the referee for a number of helpful comments.}
\begin{document}

\maketitle

 \centerline{Robert J. Archbold$^a$, Eberhard Kaniuth$^b$
and Douglas W. B. Somerset$^c$}

\bigskip

$^a$Institute of Mathematics, University of Aberdeen, King's
College, Aberdeen AB24 3UE, U.K. Email: r.archbold@abdn.ac.uk

\noindent Corresponding author. Tel. +44 1224 272756. Fax +44 1224
272607.

\medskip

$^b$Institute of Mathematics, University of Paderborn, 33095
Paderborn, Germany. \newline \noindent Email:
kaniuth@math.uni-paderborn.de

\medskip

$^c$Institute of Mathematics, University of Aberdeen, King's
College, Aberdeen AB24 3UE, U.K. Email:
somerset@quidinish.fsnet.co.uk

\bigskip

\noindent {\bf Abstract.} The derivation constant $K(A)\geq
\frac{1}{2}$ has been extensively studied for \emph{unital}
non-commutative $C^*$-algebras. In this paper, we investigate
properties of $K(M(A))$ where $M(A)$ is the multiplier algebra of a
non-unital $C^*$-algebra $A$. A number of general results are
obtained which are then applied to the group $C^*$-algebras
$A=C^*(G_N)$ where $G_N$ is the motion group $\R^N\rtimes SO(N)$.
Utilising the rich topological structure of the unitary dual
$\widehat{G_N}$, it is shown that, for $N\geq3$,
$$K(M(C^*(G_N)))= \frac{1}{2}\left\lceil \frac{N}{2}\right\rceil.$$


\bigskip

\thanks{}

\noindent{\bf Keywords.} $C^*$-algebra, multiplier algebra,
derivation, motion group, unitary dual, graph structure.

\bigskip

\noindent {\bf 2010 Mathematics Subject Classification}: 46L05,
46L57 (primary); 22C05, 22D15, 22D25, 54H15 (secondary).

\bigskip

\section{Introduction.}



\bigskip

 For a C$^*$-algebra $A$, an elementary application of the
triangle inequality shows that
$$\Vert D(a,A)\Vert\leq 2 d(a, Z(A))$$ for all $a\in A$, where $D(a,A)$ is the inner
derivation generated by $a$ and $d(a, Z(A))$ is the distance from
$a$ to $Z(A)$, the centre of $A$.  This leads naturally to the
definition of $K(A)$  as the smallest number in $[0,\infty]$ such
that
$$K(A)\Vert D(a,A)\Vert\ge d(a, Z(A))$$ for all $a\in A$ \cite{RJA, KLR}. If the
elements $a$ are restricted to be self-adjoint then the
corresponding constant is denoted by $K_s(A)$. If $A=B(H)$ (or, more
generally, a non-commutative von Neumann algebra on a Hilbert space
$H\neq\C$) then $K(A)=\frac{1}{2}$ \cite{Stam, Zs}.
For unital non-commutative $C^*$-algebras,
$K_s(A)=\frac{1}{2}\Orc(A)$ \cite{Som}, where the \emph{connecting
order} $\Orc(A)\in\NN\cup\{\infty\}$ is determined by a graph
structure in the primitive ideal space ${\rm Prim}(A)$ (see Section
2),  and for the constant $K(A)$ it has been shown that the only
possible positive values less than or equal to
$\frac{1}{2}+\frac{1}{\sqrt 3}$ are:
$$\frac{1}{2}, \quad\frac{1}{\sqrt
3}, \quad1, \quad \frac{3+8\sqrt 2}{14}, \quad \frac{4}{\sqrt 15},
\quad \frac{1}{2}+\frac{1}{\sqrt 3}$$ \cite{Some, Id, ASom}. These
results use the fine structure of the topology on $\Prim(A)$
together with spectral constructions and the constrained
optimization of the bounding radii of planar sets.

\medskip

If $A$ is a non-unital $C^*$-algebra then, as discussed in
\cite{AKSI}, the multiplier algebra $M(A)$ is the natural
unitization to consider in the context of inner derivations.
For example, it is well-known that if $A$ is a primitive $C^*$-algebra then so is $M(A)$ (cf. \cite[Example 5.5]{AKSI}) and so $K(M(A))=\frac{1}{2}$ \cite[Theorem 5]{Stam}. In particular, $K(M(A))=\frac{1}{2}$ for every simple $C^*$-algebra $A$.

\medskip

In general, in order to apply to $M(A)$ the results for unital algebras, there is a
prima facie requirement for more detailed information on $\Prim(M(A))$.
However, this space is usually much larger and more complicated than
the dense open subset $\Prim(A)$. This is illustrated by the
complexity of the Stone-\v{C}ech compactification $\beta\NN$ of the
natural numbers $\NN$ and also by the results in \cite{ASidealsX},
which apply to the motion group $C^*$-algebras considered in this
paper (see the remarks after Theorem~\ref{Orc(G_N)}). However, when
$A$ is $\sigma$-unital, the normality of the complete regularization
of $\Prim(A)$ enables ideal structure in $M(A)$ to be linked to
ideal structure in $A$ without having full knowledge of
$\Prim(M(A))$ (Proposition~\ref{Prop-Glimm-normal}). It follows from
this that, in several cases of interest, the value of $K(M(A))$ is
determined by the ideal structure in $A$ itself and hence by the
topological properties of the $T_0$-space $\Prim(A)$
\cite[3.1]{Dix}.
 This allows the possibility of computing
$K(M(A))$ for $A=C^*(G)$ in cases where $G$ is a locally compact
group whose unitary dual $\widehat{G}$ is well-understood as a
topological space.
In \cite{AKSI}, we obtained two general $C^*$-theoretic results for
$K(M(A))$ which enabled us to show that
$$K(M(C^*(G)))=K_s(M(C^*(G)))=1$$ for a number of well-known locally
compact groups $G$, including $SL(2,\R)$, $SL(2,\C)$ and the
classical motion group of the plane $G_2 = \R^2\rtimes SO(2)$.

\medskip

In this paper, we focus on $C^*(G_N)$ where $G_N$ is the
 motion group $\R^N\rtimes
SO(N)$ ($N\geq 3$). Since $G_N$ is a Type I group, ${\rm
Prim}(C^*(G_N))$ is homeomorphic to the unitary dual
$\widehat{G_N}$, which is known to have a rich topological structure
\cite{Bag, KST, EL}. Indeed, the $C^*$-algebra  $C^*(G_N)$ has
recently been identified, via the Fourier transform, with an
explicit algebra of operator fields over $\widehat{G_N}$ \cite{AEL}.
We use the topological structure of $\widehat{G_N}$ in showing that,
for $N\geq3$,
$$K(M(C^*(G_N)))=
K_s(M(C^*(G_N)))= \frac{1}{2}\left\lceil \frac{N}{2}\right\rceil.$$
Somewhat surprisingly, this formula is not valid for the case $N=2$,
despite the fact that $C^*(G_2)$ is the most well-behaved of the
motion group $C^*$-algebras (by \cite{KST} it is the only one which
is quasi-standard in the sense of \cite{ASo}).
 In contrast to the results above, note that it follows from
\cite[Proposition 2.1]{AKSI} that $K(C^*(G_N))=K_s(C^*(G_N))=1$ for
all $N\geq2$. Thus $K(M(A))$ gives much more information than $K(A)$
for the algebras $A=C^*(G_N)$.

\medskip

   In the course of determining the values for $K(M(C^*(G_N)))$,
we obtain in Sections 4, 5 and 7 several new results for general
$C^*$-algebras $A$, sometimes under the assumption that $A$ is
$\sigma$-unital and that the complete regularization map on
$\Prim(A)$ is closed. For example, in
Theorem~\ref{Orc(M(A))geqOrc(A)} we give sufficient conditions
(which are satisfied in the case $A=C^*(G_N)$) for the inequality
$$\Orc(A)\leq \Orc(M(A)).$$
 Section 5 gives some general upper bounds
for $K(M(A))$, including a new result for unital $C^*$-algebras
(Theorem~\ref{unital(n+1/2)}). The inequality $\Orc(A)\leq
\Orc(M(A))$ from Section 4 then combines with the results from
Section 5 to determine $K(M(C^*(G_N)))$ in the case where $N$ is
even (Section 6).

\medskip

When $N$ is odd, it turns out that we need a sharper estimate for
$\Orc(M(C^*(G_N)))$. Accordingly, in Section 7, we introduce a new
constant $D(A)$ arising from a graph structure on $\Sub(A)$, a
subset of the set of primal ideals of a $C^*$-algebra $A$. This is
closely linked to the way in which $\Orc(A)$ is obtained from the
graph structure on $\Prim(A)$. Indeed, either
$|\Orc(A)-D(A)|\in\{0,1\}$ or $\Orc(A)=D(A)=\infty$. In
Theorem~\ref{Orc(M(A))=D(A)+1}, we give sufficient conditions (which
are satisfied by $A=C^*(G_N)$) for $N\geq3$) for the equality
$$\Orc(M(A))=D(A)+1.$$
This exemplifies our earlier contention that, when $A$ is
$\sigma$-unital, ideal structure in $M(A)$ can be usefully related
to ideal structure in $A$.
 This equality then combines with the results from
Section 5 to determine $K(M(C^*(G_N)))$ in the case where $N$ is odd
(Section 8).


 \bigskip

\section{Preliminaries.}

\bigskip

We begin by recalling some terminology from \cite{Som}. Let $X$ be a
topological space. For $x, y\in X$ we write $x\sim y$ if $x$ and $y$
cannot be separated by disjoint open sets. The relation $\sim$ is
reflexive and symmetric but it is not always transitive. We will
view $X$ as a graph in which two points $x$ and $y$ are adjacent if
and only if $x\sim y$. For $x, y\in X$ let $d(x,y)$ denote the
distance from $x$ to $y$ in the graph $(X, \sim)$. If there is no
walk from $x$ to $y$ we write $d(x,y)=\infty$. We define the {\sl
diameter} of a $\sim$-connected component of $X$ to be the supremum
of the distances between pairs of points in the component, except
that we adopt the non-standard convention that the diameter of a
singleton component is $1$ (rather than $0$). Define $\Orc(X)$, the
connecting order of $X$, to be the supremum of the diameters of
$\sim$-connected components of $X$. By virtue of our non-standard
convention, $\Orc(X)=1$ when $X$ is a Hausdorff space. In the case
when $X$ is the primitive ideal space of a C$^*$-algebra $A$ we
write $\Orc(A)$ instead of $\Orc(\Prim(A))$; and sometimes we write
$d_A$, in place of $d$, for the distance function when we need to
emphasize the algebra we are working in. If $\sim$ is an open
equivalence relation on $\Prim(A)$ (that is, $\sim$ is an
equivalence relation and the corresponding quotient map is open)
then the $C^*$-algebra $A$ is said to be \emph{quasi-standard} (see
\cite{ASo}, where several equivalent conditions and examples are
given). Note that if $A$ is quasi-standard then $\Orc(A)=1$.

It was shown in \cite[Theorem 4.4]{Som} that, if $A$ is a unital
$C^*$-algebra, $K_s(A)= \frac{1}{2}\Orc(A)$. It follows that if $A$
is any $C^*$-algebra then $K_s(M(A))= \frac{1}{2}\Orc(M(A))$ and so
$\frac{1}{2}\Orc(M(A))\leq K(M(A))$. It turns out that equality
holds in the case where $A=C^*(G_N)$ ($N\geq3$). We shall show this
by establishing, for the four cases modulo $4$, that
$$K(M(A))\leq\frac{1}{2}\left\lceil
\frac{N}{2}\right\rceil\qquad\hbox{ and also }\qquad\left\lceil
\frac{N}{2}\right\rceil\leq\Orc(M(A)).$$

\bigskip

We now recall some properties of the complete regularization of
$\Prim(A)$ for a $C^*$-algebra $A$ (see \cite{DH} for further
details). For $P,Q\in\Prim(A)$ let $P\approx Q$ if and only if
$f(P)=f(Q)$ for all $f\in C^b(\Prim(A))$. Then $\approx$ is an
equivalence relation on $\Prim(A)$ and the equivalence classes are
closed subsets of $\Prim(A)$. It follows that there is a one-to-one
correspondence between $\Prim(A)/\approx$ and a set of closed
two-sided ideals of $A$ given by
$$[P]\to \bigcap[P]\qquad (P\in\Prim(A)),$$
where $\bigcap[P]$ is the intersection of the ideals in the equivalence class $[P]$ of $P$. The set of ideals
obtained in this way is denoted by $\Glimm(A)$ and we identify this
set with $\Prim(A)/\approx$ by the correspondence above. If $A$ is
unital then $\Glimm(A)$ consists of the ideals of $A$ generated by
the maximal ideals of the centre of $A$, as studied by Glimm
\cite{Glimm}. The quotient map $\phi_A:\Prim(A)\to\Glimm(A)$ is
called \emph{the complete regularization map}. The standard topology
on $\Glimm(A)$ is the topology $\tau_{cr}$, which is the weakest
topology for which the functions on $\Glimm(A)$ induced by
$C^b(\Prim(A))$ are all continuous. This topology is completely
regular, Hausdorff, weaker than the quotient topology (and equal to
it when $A$ is $\sigma$-unital \cite[Theorem 2.6]{Lazar}) and hence
makes $\phi_A$ continuous. The ideals in $\Glimm(A)$ are called
\emph{Glimm ideals} and the equivalence classes for $\approx$ in
$\Prim(A)$ will sometimes be referred to as \emph{Glimm classes}.

Note that if $P,Q\in \Prim(A)$, $G\in \Glimm(A)$ and $P\supseteq
G=\bigcap[Q]$ then, since $[Q]$ is closed, $P\in [Q]$ and so
$\phi_A(P)=\phi_A(Q)=G$. It follows that, for $P\in\Prim(A)$ and
$G\in\Glimm(A)$, $P\supseteq G$ if and only if $\phi_A(P)=G$. For
$P,Q\in\Prim(A)$, it is clear that $P\sim Q$ implies that $P\approx
Q$. The converse implication holds whenever $A$ is quasi-standard
\cite[Proposition 3.2]{ASo}. In general, a Glimm class is said to be
\emph{$\sim$-connected} if it consists of a single $\sim$-component.

\medskip

We recall that $A$ is said to be \emph{$\sigma$-unital} if it
contains a strictly positive element or, equivalently, a countable
approximate unit \cite[3.10.5]{GKP}.  If $A$ is $\sigma$-unital with
a strictly positive element $u$ then $\Prim(A)$ is the union of the
compact sets $\{P\in \Prim(A):\Vert u+P\Vert \geq 1/n\}$ ($n\geq1$).
Since $\phi_A$ is continuous, Glimm(A) is $\sigma$-compact, hence
Lindel\"{o}f, and therefore normal by (complete) regularity  (see
\cite[3D]{GJ} or \cite[Ch.2, Proposition 1.6]{Pears}).

\medskip

There is a homeomorphism $\iota$ from $\beta\Glimm(A)$ onto
$\Glimm(M(A))$ such that $\iota(\phi_A(P))=\phi_{M(A)}(\tilde{P})$
where, for $P\in\Prim(A)$, $\tilde{P}$ is the unique primitive ideal
of $M(A)$ such that $\tilde{P}\cap A=P$ (see, for example, \cite[p.
88]{MM} and \cite[Proposition 4.7]{ASMJM}). For $G\in \Glimm(A)$, we
write $H_G=\iota(G)$. The next result \cite[Proposition 3.2]{AKSI}
is a technical step which is used to move from a
general element of $\Glimm(M(A))$ to an element of the dense subset
$\iota(\Glimm(A))$. We are grateful to the referee for pointing out that the proof in \cite{AKSI} applies to the case $n=1$ as well as to the case $n\geq2$.

\begin{prop}\label{Prop-Glimm-normal} Let $A$ be a $C^*$-algebra with
${\rm Glimm}(A)$ normal. Let $n\geq1$, $H\in {\rm Glimm}(M(A))$ and
$Q_i\in {\rm Prim}(M(A)/H)$ ($1\leq i\leq n$). For $1\leq i\leq n$,
let $N_i$ be an open neighbourhood of $Q_i$ in ${\rm Prim}(M(A))$.
Then there exists $K\in {\rm Glimm}(A)$ and $Q'_i\in {\rm
Prim}(M(A)/H_K)$ such that $Q'_i\in N_i$ ($1\leq i\leq n$).
\end{prop}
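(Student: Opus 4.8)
The plan is to exploit the homeomorphism $\iota\colon\beta\Glimm(A)\to\Glimm(M(A))$ together with the normality of $\Glimm(A)$, reducing the problem about primitive ideals of $M(A)$ over a general Glimm ideal $H$ to one over the nearby Glimm ideals $H_K$ with $K\in\Glimm(A)$. First I would fix $H\in\Glimm(M(A))$ and the primitive ideals $Q_1,\dots,Q_n\in\Prim(M(A)/H)$, i.e.\ primitive ideals of $M(A)$ containing $H$, together with the open neighbourhoods $N_i\ni Q_i$ in $\Prim(M(A))$. Shrinking if necessary, I may assume each $N_i$ is a basic open set of the form $\{Q\in\Prim(M(A)): Q\not\supseteq J_i\}$ for some closed ideal $J_i$ of $M(A)$; since $Q_i\in N_i$ we have $J_i\not\subseteq Q_i$, so we may pick $a_i\in J_i$ with $a_i\notin Q_i$, and then $N_i'=\{Q: \|a_i+Q\|>\tfrac12\|a_i+Q_i\|\}$ is an open neighbourhood of $Q_i$ contained in $N_i$. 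The point of passing to such ``norm-lower-bound'' neighbourhoods is that the function $Q\mapsto\|a_i+Q\|$ is lower semicontinuous on $\Prim(M(A))$, and its behaviour over a Glimm class is governed by the norm in the corresponding quotient.

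Next I would push the data down to the Glimm level. The Glimm ideal $H$ corresponds, via $\iota^{-1}$, to a point $\omega\in\beta\Glimm(A)$, and the key fact (from the normality of $\Glimm(A)$, which makes it $C^*$-embedded in its Stone--\v Cech compactification) is that every neighbourhood of $\omega$ in $\beta\Glimm(A)$ meets $\Glimm(A)$, i.e.\ $\iota(\Glimm(A))$ is dense in $\Glimm(M(A))$, and moreover continuous bounded functions on $\Glimm(A)$ extend continuously to $\omega$. Concretely, for each $i$ consider the function $g_i\colon\Glimm(M(A))\to[0,\infty)$, $g_i(G')=\sup\{\|a_i+Q\|: Q\in\Prim(M(A)),\ \phi_{M(A)}(Q)=G'\}$, which equals the norm of the image of $a_i$ in $M(A)/M_{G'}$ where $M_{G'}$ is the Glimm ideal corresponding to $G'$; this is the restriction to a closed subset of a lower semicontinuous function, and since $Q_i\supseteq H$ we have $g_i(H)\geq\|a_i+Q_i\|>0$. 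Using lower semicontinuity at $H$ and the density of $\iota(\Glimm(A))$, I can choose $K\in\Glimm(A)$ with $H_K=\iota(K)$ as close to $H$ as needed so that $g_i(H_K)>\tfrac12\|a_i+Q_i\|$ for all $i=1,\dots,n$ simultaneously (finitely many constraints). This says precisely that for each $i$ there is a primitive ideal $Q_i'$ of $M(A)$ with $Q_i'\supseteq H_K$ and $\|a_i+Q_i'\|>\tfrac12\|a_i+Q_i\|$, hence $Q_i'\in N_i'\subseteq N_i$ and $Q_i'\in\Prim(M(A)/H_K)$, as required.

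The main obstacle is the step producing a \emph{single} $K$ that works for all $i$ at once while also lying in the dense set $\iota(\Glimm(A))$ rather than merely in $\Glimm(M(A))$: this is where normality of $\Glimm(A)$ is essential, since it is what guarantees that bounded continuous functions on $\Glimm(A)$ (equivalently, on $\Prim(A)$) extend over $\beta\Glimm(A)$, so that the finitely many open conditions $g_i>\tfrac12\|a_i+Q_i\|$, which cut out a neighbourhood of $H$ in $\Glimm(M(A))$, necessarily meet the dense image $\iota(\Glimm(A))$. A secondary technical point is checking that $g_i$ is genuinely lower semicontinuous as a function on $\Glimm(M(A))$ (not just on $\Prim(M(A))$): this follows because $\phi_{M(A)}$ is continuous and the supremum of $\|a_i+\cdot\|$ over a fibre is attained and equals the quotient norm $\|a_i+M_{G'}\|$, and $G'\mapsto\|a_i+M_{G'}\|$ is lower semicontinuous by a standard argument using that $M_{G'}=\bigcap\phi_{M(A)}^{-1}(G')$. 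Once these two ingredients are in hand, the argument for general $n\geq1$ is uniform and requires no separate treatment of $n=1$.
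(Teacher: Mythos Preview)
The paper does not give its own proof of this proposition; it is quoted from \cite[Proposition~3.2]{AKSI}, with the remark that the argument there already covers $n=1$ as well as $n\geq 2$. So there is no in-paper proof to compare against, but one can still assess your proposal against the known argument (whose shape is visible in the proof of Lemma~\ref{Orc(M(A)leqn} of the present paper).

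Your proposal has a genuine gap at the key analytic step. You assert that the fibre-norm function
\[
g_i\colon G'\longmapsto \Vert a_i + M_{G'}\Vert
\]
is \emph{lower} semicontinuous on $\Glimm(M(A))$, and the rest of the argument hinges on this. In fact, for a unital $C^*$-algebra $B$ the Dauns--Hofmann picture makes $B$ an upper semicontinuous $C(\Glimm(B))$-algebra: the functions $G'\mapsto \Vert b+G'\Vert$ are always upper semicontinuous, but lower semicontinuity would amount to $B$ being a continuous field over $\Glimm(B)$, which is certainly not assumed for $B=M(A)$. The ``standard argument using $M_{G'}=\bigcap\phi_{M(A)}^{-1}(G')$'' that you invoke only rewrites $\Vert a_i+M_{G'}\Vert$ as a supremum of $\Vert a_i+P\Vert$ over the fibre; turning this into lower semicontinuity would require $\phi_{M(A)}$ to be an open map, which is not available. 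Relatedly, your account of where normality enters is off: density of $\iota(\Glimm(A))$ in $\beta\Glimm(A)=\Glimm(M(A))$ and the extension of bounded continuous functions to $\beta\Glimm(A)$ are built into the Stone--\v Cech construction and use no normality.

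The argument that works reverses the roles of the two semicontinuities. Upper semicontinuity on $\Glimm(M(A))$ shows that each
\[
V_i=\{G\in\Glimm(A): \Vert a_i+H_G\Vert\geq c_i\}
\]
is closed in $\Glimm(A)$. Lower semicontinuity on $\Prim(M(A))$, together with the density of $\{\tilde P:P\in\Prim(A)\}$, shows that $H$ lies in the closure of $\iota(V_i)$ for every $i$: take a net $(P_\alpha)$ in $\Prim(A)$ with $\tilde P_\alpha\to Q_i$, note that eventually $\Vert a_i+\tilde P_\alpha\Vert>c_i$, hence $\Vert a_i+H_{\phi_A(P_\alpha)}\Vert>c_i$, while $\iota(\phi_A(P_\alpha))=\phi_{M(A)}(\tilde P_\alpha)\to H$. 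Finally, normality of $\Glimm(A)$ is what forces $\bigcap_i V_i\neq\emptyset$: if the intersection were empty, a finite partition of unity subordinate to the open cover $\{\Glimm(A)\setminus V_i\}_{i=1}^n$ would extend to $\beta\Glimm(A)$ and vanish identically at $H$, contradicting $\sum_i h_i\equiv 1$. Any $K\in\bigcap_i V_i$ then furnishes the required $Q'_i\in\Prim(M(A)/H_K)\cap N_i$.
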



\section{Topological properties of $\widehat{G_N}$.}

In this section, we collect some facts concerning the fine structure
of $\widehat{G_N}$ ($N\geq2$). Recall that $G_N=\R^N\rtimes SO(N)$
where $SO(N)$ acts on $\R^N$ by rotation. We embed $SO(N-1)$ into
$SO(N)$ by $SO(N-1)\to {\rm diag}(1, SO(N-1))$. Thus $SO(N-1)$ is
the stability group of characters $\chi_t$ of $\R^N$ corresponding
to vectors $(t, 0, \ldots ,0)\in \R^N$, $t\ne 0$. For $t>0$ and $\
\sigma\in SO(N-1)^{\wedge}\}$, let $\pi_{t,\sigma}= {\rm
ind}^{G_N}_{\R^N\rtimes SO(N-1)}\,\chi_t\times\sigma$, the
irreducible representation of $G_N$ induced by the representation
$\chi_t\times\sigma$ of $\R^N\rtimes SO(N-1)$.
 Then
$$\widehat{G_N}=SO(N)^{\wedge}\cup\{\pi_{t,\sigma}: t>0,\ \sigma\in SO(N-1)^{\wedge}\},$$ where $SO(N)^{\wedge}$ is
considered as a subset of $\widehat{G_N}$ since $G_N/\R^N=SO(N)$.

The topology on $\widehat{G_N}$ is described in \cite{Bag} (see also
\cite{KST}, \cite{EL}, \cite{AEL}). Let
$${\mathcal{U}}_N:=\{\pi_{t,\sigma}: t>0,\
\sigma\in SO(N-1)^{\wedge}\}=\widehat{G_N}\setminus
SO(N)^{\wedge}.$$ The relative topology on $\mathcal{U}_N$ is the
topology induced from the product topology on $(0,\infty)\times
SO(N-1)^{\wedge}$ and of course the relative topology on the closed
subset $SO(N)^{\wedge}$ is discrete. Furthermore, a sequence
$(\pi_{t_n,\sigma_n})_{n\geq1}$ in $\mathcal{U}_N$ is convergent to
some $\pi\in SO(N)^{\wedge}$ if and only if $t_n\to0$ as
$n\to\infty$ and eventually $\sigma_n$ is contained in
$\pi|_{SO(N-1)}$ (see, for example, \cite[Theorem 3.4]{EL}).




If $\pi_{t,\sigma}$ belongs to the Hausdorff space $\mathcal{U}_N$
then $B:=\{\pi_{s,\sigma}:s\geq t/2\}$ is a neighbourhood of
$\pi_{t,\sigma}$ and is closed in $\widehat{G_N}$.  It follows that
$\pi_{t,\sigma}$ is a separated point of $\widehat {G_N}$ and also
that $\ker\pi_{t,\sigma}$ is a Glimm ideal of $C^*(G_N)$ (cf.
\cite[Proposition 7]{Del} and \cite[Proposition 4.9]{KST}). In
summary, $\widehat{G_N}$ consists of the closed subset
$SO(N)^{\wedge}$ which is relatively discrete, together with a dense
open subset of separated points which is the disjoint union of a
countably infinite collection of open half-lines.

\bigskip
Bearing in mind that $SO(2)^{\wedge}=\widehat{\T}=\Z$, we next
recall the representation theory of the groups $SO(N)$ for $N\ge 3$
\cite{Mur} (see also \cite{AEL, EL, Knapp}). Let $k=\lfloor
\frac{N}{2}\rfloor$. The irreducible representations of $SO(N)$ are
parametrized by signatures $(m_1, \ldots ,m_k)\in {\Z}^k$, where
$$m_1\ge m_2\ge \ldots \ge m_{k-1}\ge |m_k| \hbox{ if $N=2k$}$$
$$\hbox{ and } m_1\ge m_2\ge\ldots\ge m_k\ge 0\hbox{ if $N=2k+1$}.$$
Moreover, if $N= 2k$, then
$$(m_1,\ldots , m_k)|_{SO(N-1)}=\sum_{m_i\ge q_i\ge |m_{i+1}|\,\,( 1\le
i\le k-1)} (q_1, \ldots, q_{k-1}),$$
and, if $N=2k+1$, then
$$(m_1,
\ldots, m_k)|_{SO(N-1)}=\sum_{m_i\ge p_i\ge m_{i+1}\,\,( 1\le i\le
k-1),\,\, m_k\ge p_k\ge -m_k} (p_1, \ldots, p_k).$$ Note that in
both cases, the combinatorial condition shows that the number of
summands on the right-hand side is finite. This is, of course,
consistent with the fact that the representation on the left-hand
side is finite dimensional.

\bigskip

Since $\widehat{G_N}\setminus SO(N)^{\wedge}$ consists of separated
points of $\widehat{G_N}$, the next result gives a full description
of the relation $\sim$ of inseparability on $\widehat{G_N}$.

\begin{lemma}\label{motion-sim}  Let $N\geq2$ and
$\pi_1,\pi_2\in SO(N)^{\wedge}\subseteq
\widehat{G_N}$. Then $\pi_1\sim\pi_2$ if and only if
$\pi_1|_{SO(N-1)}$ and $\pi_2|_{SO(N-1)}$ have a common irreducible
subrepresentation.
\end{lemma}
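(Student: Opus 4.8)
The plan is to exploit the explicit description of the topology on $\widehat{G_N}$ recorded just above, namely that a sequence $(\pi_{t_n,\sigma_n})$ in $\mathcal{U}_N$ converges to $\pi\in SO(N)^\wedge$ precisely when $t_n\to 0$ and $\sigma_n$ is eventually a subrepresentation of $\pi|_{SO(N-1)}$. Since $SO(N)^\wedge$ is relatively discrete and the points of $\mathcal{U}_N$ are separated, the only way two distinct points $\pi_1,\pi_2\in SO(N)^\wedge$ can fail to be separated by disjoint open sets is via a common limit behaviour coming from $\mathcal{U}_N$. So the whole content is to translate $\pi_1\sim\pi_2$ into the existence of a suitable net (or, by the sequential description available here, sequence) in $\mathcal{U}_N$ accumulating at both points.

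For the \emph{if} direction, suppose $\sigma\in SO(N-1)^\wedge$ is a common irreducible subrepresentation of $\pi_1|_{SO(N-1)}$ and $\pi_2|_{SO(N-1)}$. Take $t_n\to 0^+$ and consider the sequence $(\pi_{t_n,\sigma})$ in $\mathcal{U}_N$. By the convergence criterion it converges to $\pi_1$ and also to $\pi_2$. Hence any open neighbourhood of $\pi_1$ and any open neighbourhood of $\pi_2$ both contain a tail of this sequence, so they intersect; thus $\pi_1\sim\pi_2$. (One should note $\pi_1\ne\pi_2$ is not needed: if $\pi_1=\pi_2$ the conclusion $\pi_1\sim\pi_2$ is trivial since $\sim$ is reflexive, and the restriction always contains some irreducible subrepresentation, so the stated equivalence is consistent in that degenerate case.)

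For the \emph{only if} direction, suppose $\pi_1\sim\pi_2$ with $\pi_1\ne\pi_2$ (the case $\pi_1=\pi_2$ being trivial as above). The sets $\{\pi_1\}\cup\{\pi_{s,\sigma}:\ldots\}$ — more precisely, I want to use that every neighbourhood of $\pi_i$ must meet every neighbourhood of $\pi_j$. Since $SO(N)^\wedge$ is discrete in $\widehat{G_N}$, one can choose neighbourhoods $V_i$ of $\pi_i$ with $V_i\cap SO(N)^\wedge=\{\pi_i\}$; as $\pi_1\ne\pi_2$, the intersection $V_1\cap V_2$ is a nonempty subset of $\mathcal{U}_N$. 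Shrinking, for each $n$ take $V_i^{(n)}$ a neighbourhood basis element of $\pi_i$ and pick $\pi_{t_n,\sigma_n}\in V_1^{(n)}\cap V_2^{(n)}$; standard basic neighbourhoods of $\pi_i$ in $\widehat{G_N}$ have the form $\{\pi_i\}\cup\{\pi_{s,\tau}: 0<s<\varepsilon,\ \tau\in\pi_i|_{SO(N-1)}\}$ (together, perhaps, with an open condition from $\mathcal{U}_N$), which forces $t_n\to 0$ and $\sigma_n\subseteq\pi_1|_{SO(N-1)}$ as well as $\sigma_n\subseteq\pi_2|_{SO(N-1)}$ for all large $n$. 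Since $\pi_1|_{SO(N-1)}$ is finite dimensional, it has only finitely many irreducible constituents, so some fixed $\sigma$ occurs as $\sigma_n$ infinitely often, and that $\sigma$ is a common irreducible subrepresentation of both restrictions.

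The main obstacle is making the neighbourhood-basis description precise enough to justify the two implications cleanly: one must confirm that the sets displayed above genuinely form a neighbourhood basis at a point $\pi\in SO(N)^\wedge$ in $\widehat{G_N}$ (not merely that sequences of the stated form converge), and that membership of such a neighbourhood really does impose both constraints $t$ small and $\sigma\subseteq\pi|_{SO(N-1)}$. This is exactly where one invokes \cite[Theorem 3.4]{EL} (or \cite{Bag}) for the topology. A secondary, purely bookkeeping point is the handling of the degenerate case $\pi_1=\pi_2$ and the use of finite-dimensionality of $SO(N)$-representations to pass from "each $\sigma_n$ works for large $n$" to "a single $\sigma$ works", which is where the earlier branching-rule discussion (finiteness of the number of summands) is used.
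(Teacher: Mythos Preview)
Your argument is correct and follows the same overall shape as the paper's proof: the ``if'' direction is identical (take a null sequence $t_n$ and use $\pi_{t_n,\sigma}$), and for ``only if'' both proofs produce a sequence in $\mathcal{U}_N$ converging to $\pi_1$ and $\pi_2$, then read off a common $\sigma$ from the sequential convergence criterion.

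The one point of difference is in how that sequence is obtained. The paper bypasses the issue you flagged as ``the main obstacle'' by invoking \cite[Lemma~1.2]{AKSS}: since $C^*(G_N)$ is separable and $\mathcal{U}_N$ is dense in $\widehat{G_N}$, inseparability of $\pi_1,\pi_2$ automatically yields a sequence in $\mathcal{U}_N$ converging to both, with no need to write down an explicit neighbourhood basis at $\pi_i$. Your hands-on construction via decreasing basic neighbourhoods works too (second countability of $\widehat{G_N}$ suffices, and you do not actually need the precise form of the basis---once you have a sequence in $\mathcal{U}_N$ converging to both points, the stated sequential criterion alone forces $t_n\to 0$ and $\sigma_n\subseteq\pi_i|_{SO(N-1)}$ eventually). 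Finally, your closing pigeonhole step using finite-dimensionality is unnecessary: as soon as some $\sigma_n$ lies in both restrictions, that single $\sigma_n$ is the desired common constituent.
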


\begin{proof}
Suppose that $\pi_1|_{SO(N-1)}$ and $\pi_2|_{SO(N-1)}$ both contain
some $\sigma\in SO(N-1)^{\wedge}$. Let $t_n=1/n$ ($n\geq1$). Then
$\pi_{t_n,\sigma}\to\pi_1,\pi_2$ as $n\to\infty$ and so
$\pi_1\sim\pi_2$.

Conversely, suppose that $\pi_1\sim\pi_2$. Since $C^*(G_N)$ is
separable and $\mathcal{U}_N$ is dense in $\widehat{G_N}$, it
follows from \cite[Lemma 1.2]{AKSS} that there is a sequence
$(\pi_{t_n,\sigma_n})_{n\geq1}$ in $\mathcal{U}_N$ which is
convergent to both $\pi_1$ and $\pi_2$. Then eventually $\sigma_n$
is contained in both $\pi_1|_{SO(N-1)}$ and $\pi_2|_{SO(N-1)}$.
\end{proof}

\bigskip

 We can now determine ${\rm Orc}(A)$ in the
case where $A=C^*(G_N)$. Since $C^*(G_2)$ is quasi-standard, ${\rm
Orc}(C^*(G_2))=1$.

\begin{prop}\label{G_Nsimpaths} Let $G_N$ be a motion group ($N\geq3$) and suppose that
$k=\lfloor N/2\rfloor$.
\begin{enumerate}
\item[{\rm (i)}] $d((m_1,\ldots , m_k), (n_1, \ldots ,n_k))\le k$ for
$(m_1, \ldots, m_k), (n_1, \ldots , n_k)\in \widehat{SO(N)};$
\item[{\rm (ii)}]
 $d((1, \ldots, 1), (0, \ldots , 0))\ge k$ for
$(0,\ldots,0),(1,\ldots,1)\in SO(N)^{\wedge}$.
\end{enumerate}
\end{prop}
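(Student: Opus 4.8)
The plan is to translate both statements into combinatorial assertions about signatures, using Lemma~\ref{motion-sim}: the graph distance $d(\pi,\rho)$ on $SO(N)^\wedge$ counts the minimal number of steps in which consecutive signatures have a common irreducible constituent upon restriction to $SO(N-1)$. The branching rules quoted in the excerpt make ``$\sigma$ is a common constituent of $\pi_1|_{SO(N-1)}$ and $\pi_2|_{SO(N-1)}$'' completely explicit: for $N=2k+1$, $\pi=(m_1,\dots,m_k)$ and $\rho=(n_1,\dots,n_k)$ are adjacent precisely when there is $(p_1,\dots,p_k)$ interlacing both, i.e.\ $\min(m_i,n_i)\ge p_i$ and $p_i\ge\max(m_{i+1},n_{i+1})$ for $1\le i\le k-1$, together with $\min(m_k,n_k)\ge p_k\ge -\min(m_k,n_k)$; such $(p_i)$ exists iff $\min(m_i,n_i)\ge\max(m_{i+1},n_{i+1})$ for all $i\le k-1$ (the last coordinate imposes no obstruction since $0$ is always available). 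For $N=2k$ the analogous condition is that $\min(m_i,n_i)\ge\max(|m_{i+1}|,|n_{i+1}|)$ for $1\le i\le k-2$ and $\min(m_{k-1},n_{k-1})\ge\max(|m_k|,|n_k|)$. So the first reduction is to record these adjacency criteria cleanly, distinguishing the even and odd cases.

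For part (i), the strategy is to produce an explicit walk of length at most $k$ from $(m_1,\dots,m_k)$ to $(n_1,\dots,n_k)$ by changing one coordinate at a time, from the last coordinate to the first (or exploiting a common ``low'' vertex). A clean approach: first walk from $(m_1,\dots,m_k)$ down to a signature all of whose tail coordinates are small, then walk up to $(n_1,\dots,n_k)$, choosing intermediate signatures so that at each step the ``$\min\ge\max$'' interlacing condition holds automatically because one of the two signatures has been arranged to have sufficiently small entries in the relevant positions. For instance, one can move $(m_1,\dots,m_k)\to(m_1,\dots,m_{k-1},0)$ (legal since only the last coordinate changes and $0$ is dominated), then $(m_1,\dots,m_{k-2},0,0)$, and so on, in $k$ steps reaching a vertex with all-zero tail; but a more efficient bookkeeping interleaves the descent from $m$ and the ascent to $n$ so that the total is $k$ rather than $2k$. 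The key point to verify at each step is the adjacency criterion, which holds because consecutive signatures in the walk agree except in one coordinate, and the changed coordinate is squeezed between the common neighbours. I would do the odd case $N=2k+1$ in detail and remark that the even case is identical except for the $|m_k|$ convention, which only helps.

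For part (ii), the task is the lower bound $d((1,\dots,1),(0,\dots,0))\ge k$, and here the natural tool is a monovariant. Define, for a signature $s=(s_1,\dots,s_k)$, the quantity $\ell(s)=\#\{i: s_i\ge 1\}$ (the number of coordinates that are at least $1$); then $\ell((1,\dots,1))=k$ and $\ell((0,\dots,0))=0$. The claim I want is that adjacency can change $\ell$ by at most $1$: if $s$ and $s'$ are adjacent via interlacer $(p_i)$, then because $p_i\le\min(s_i,s'_i)$ and $p_i\ge\max(s_{i+1},s'_{i+1})$, the sequence $(p_i)$ is itself a signature (for $SO(N-1)$) squeezed below both, and $|\ell(s)-\ell(p)|$ and $|\ell(p)-\ell(s')|$ are each at most $1$ by the interlacing — but that gives $2$, not $1$, so I need a sharper invariant. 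A better monovariant is $\ell(s)=s_1$ itself is too crude; instead use the observation that in the odd case adjacency forces $\min(s_i,s'_i)\ge\max(s_{i+1},s'_{i+1})$, so if $s=(0,\dots,0)$ then $\max(s'_2,0)\le\min(s'_1,0)=0$ forces $s'_2=\dots=s'_k=0$ and $s'_1$ arbitrary $\ge0$; iterating, a walk from $(0,\dots,0)$ can in one step only reach $(a,0,\dots,0)$, in two steps $(a,b,0,\dots,0)$ with $a\ge b$, and in general after $j$ steps only signatures supported on the first $j$ coordinates. Since $(1,\dots,1)$ has all $k$ coordinates nonzero, it needs at least $k$ steps. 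This ``support spreads by at most one coordinate per step'' lemma is the crux; I expect the main obstacle to be stating and proving it with the correct handling of the last coordinate in the even case $N=2k$ (where $m_k$ may be negative and the interlacing condition on the final pair $(s_{k-1},s_k)$ has the extra absolute value), so I would isolate it as a short lemma and check the two parities separately.
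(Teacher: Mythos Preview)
Your treatment of part~(ii) is correct and coincides with the paper's argument: after the (rightly discarded) false start with $\ell(s)=\#\{i:s_i\ge1\}$, you arrive at precisely the monovariant the paper uses, namely that any signature reachable from $(0,\ldots,0)$ in $j$ steps has its last $k-j$ coordinates equal to zero. The inductive step is exactly the computation you sketch, and the paper carries it out separately for $N=2k$ and $N=2k+1$ just as you anticipate.

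For part~(i), your strategic outline (interleave a ``descent'' from $m$ with an ``ascent'' to $n$) is the right shape, but the explicit construction you offer has a gap. You assert that ``consecutive signatures in the walk agree except in one coordinate, and the changed coordinate is squeezed between the common neighbours,'' but single-coordinate moves do not in general give a length-$k$ walk: for instance with $k=4$, neither the left-to-right walk nor the right-to-left walk from $(5,5,0,0)$ to $(2,2,2,2)$ even produces valid signatures at the first step, so any one-coordinate scheme would need a non-obvious ordering that you do not supply. The paper's construction is different and more transparent: setting $s_j=\max\{m_j,n_j\}$, it moves \emph{two} coordinates at each step, raising a leading coordinate to $s_j$ while simultaneously zeroing a trailing one, e.g.\
\[
(m_1,\ldots,m_k)\ \sim\ (s_1,m_2,\ldots,m_{k-1},0)\ \sim\ (s_1,s_2,m_3,\ldots,m_{k-2},0,0)\ \sim\ \cdots,
\]
and each such step is immediately verified by exhibiting the common $SO(N-1)$-constituent. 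This reaches the meeting point $(s_1,\ldots,s_r,0,\ldots,0)$ (with $r=\lfloor k/2\rfloor$) from $m$ in $r$ steps, and symmetrically from $n$, with one extra connecting step when $k$ is odd, giving total length $k$. Your ``interleaving'' remark is gesturing at this, but the actual mechanism is the two-coordinate move, not the one-coordinate move you describe.
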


\begin{proof} (i) For $1\le j\le k$, let $s_j=\max\{ m_j, n_j\}$.
Suppose first of all that $N$ is even, so that $N=2k$.
 Since $m_{k-1}\geq|m_k|$,
$$ (m_1, \ldots,m_k)\sim (s_1,m_2, \ldots, m_{k-1},0)$$
because the restrictions to $SO(N-1)$ contain $(m_1,
\ldots,m_{k-1})$. Similarly,
$$ (s_1,m_2, \ldots,m_{k-1},0)\sim (s_1,s_2, m_3\ldots, m_{k-2},0,0)$$
because the restrictions to $SO(N-1)$ contain $(s_1,m_2,
\ldots,m_{k-2},0)$.

Suppose that $k$ is even, say $k=2r$ where $r\geq1$. Then,
proceeding as above, we obtain an $r$-step $\sim$-walk in
$\widehat{G_N}$ from $(m_1,\ldots,m_k)$ to $(s_1, \ldots,s_r,0,
\ldots,0)$ and so $d((m_1,\ldots , m_k), (n_1, \ldots ,n_k))\le 2r=
k$.

Now suppose that $k$ is odd, say $k=2r+1$ where $r\geq1$. In this
case, we obtain an $r$-step $\sim$-walk from $(m_1,\ldots,m_k)$ to
$(s_1, \ldots,s_r,m_{r+1},0, \ldots,0)$ and an $r$-step $\sim$-walk
from $(n_1,\ldots,n_k)$ to $(s_1, \ldots,s_r,n_{r+1},0, \ldots,0)$.
But
$$(s_1, \ldots,s_r,m_{r+1},0, \ldots,0)
\sim(s_1, \ldots,s_r,n_{r+1},0, \ldots,0)$$ because the restrictions
to $SO(N-1)$ contain $(s_1, \ldots,s_r,0, \ldots,0)$. Hence
$$d((m_1,\ldots , m_k), (n_1, \ldots ,n_k))\le 2r+1= k.$$

We now turn to the case where $N$ is odd, so that $N=2k+1$. If
$k=1$, $(m_1)\sim(n_1)$ in $\widehat{G_3}$ because the restrictions
to $SO(2)$ contain the trivial representation $(0)$ of $SO(2)$. So
we now suppose $k\geq 2$. Since $0\geq-m_k$,
$$ (m_1, \ldots,m_k)\sim (s_1,m_2, \ldots, m_{k-1},0)$$
because the restrictions to $SO(N-1)$ contain $(m_1,
\ldots,m_{k-1},0)$. Similarly,
$$ (s_1,m_2, \ldots,m_{k-1},0)\sim (s_1,s_2, m_3\ldots, m_{k-2},0,0)$$
because the restrictions to $SO(N-1)$ contain $(s_1,m_2,
\ldots,m_{k-2},0,0)$. Thus if $k=2r$, we obtain an $r$-step
$\sim$-walk in $\widehat{G_N}$ from $(m_1,\ldots,m_k)$ to $(s_1,
\ldots,s_r,0, \ldots,0)$ and similarly from $(n_1,\ldots,n_k)$ to
$(s_1, \ldots,s_r,0, \ldots,0)$. Hence $d((m_1,\ldots , m_k), (n_1,
\ldots ,n_k))\le 2r= k$.

If $k=2r+1$ (with $r\geq 1$), we obtain an $r$-step $\sim$-walk from
$(m_1,\ldots,m_k)$ to $(s_1, \ldots,s_r,m_{r+1},0, \ldots,0)$ and an
$r$-step $\sim$-walk from $(n_1,\ldots,n_k)$ to $(s_1,
\ldots,s_r,n_{k+1},0, \ldots,0)$. But
$$(s_1, \ldots,s_r,m_{r+1},0, \ldots,0)
\sim(s_1, \ldots,s_r,n_{r+1},0, \ldots,0)$$ because the restrictions
to $SO(N-1)$ contain $(s_1, \ldots,s_r,0, \ldots,0)$. Hence
$$d((m_1,\ldots , m_k), (n_1, \ldots ,n_k))\le 2r+1= k.$$

 (ii) We consider first the case $N=2k$ (so that $k\geq2$). Suppose
that $0\leq i\leq k-2$, $\pi=(M_1,\ldots,M_k)\in SO(N)^{\wedge}$,
$\pi'=(M'_1,\ldots,M'_k)\in SO(N)^{\wedge}$, $\pi\sim\pi'$ in
$\widehat{G_N}$ and $M_j=0$ for $i<j\leq k$. By
Lemma~\ref{motion-sim}, there exists $\sigma=(q_1,\ldots,q_{k-1})\in
SO(N-1)^{\wedge}$ such that
$$M_1\geq q_1\geq M_2\geq \ldots \geq M_i\geq q_i\geq 0 \geq q_{i+1}$$
and
$$M'_1\geq q_1 \geq \ldots\geq M'_{i+1}\geq q_{i+1} \geq \ldots \geq |M'_k|.$$
Thus $q_{i+1}=\ldots=q_{k-1}=0$ and so $M'_{i+2}=\ldots=M'_k=0$.
Applying this with $i=0,1,\ldots,k-2$ in turn, we obtain that
$$d((0,\ldots,0),(1,\ldots,1))\geq k.$$
 Now suppose
that $N=2k+1$. If $k=1$, then $(0)\sim(1)$ in $\widehat{G_3}$
because the restrictions to $SO(2)$ contain the trivial
representation $(0)$ of $SO(2)$. So we now assume $k\geq2$. Suppose
that $0\leq i\leq k-2$, $\pi=(M_1,\ldots,M_k)\in SO(N)^{\wedge}$,
$\pi'=(M'_1,\ldots,M'_k)\in SO(N)^{\wedge}$, $\pi\sim\pi'$ in
$\widehat{G_N}$ and $M_j=0$ for $i<j\leq k$. By
Lemma~\ref{motion-sim}, there exists $\sigma=(p_1,\ldots,p_k)\in
SO(N-1)^{\wedge}$ such that
$$M_1\geq p_1\geq M_2\geq \ldots \geq M_i\geq p_i\geq 0 \geq p_{i+1}\geq\ldots\geq
M_k\geq p_k\geq-M_k$$ and
$$M'_1\geq p_1 \geq \ldots\geq M'_{i+1}\geq p_{i+1} \geq \ldots \geq M'_k
\geq p_k\geq -M_k.$$ Thus $p_{i+1}=\ldots=p_k=0$ and so
$M'_{i+2}=\ldots=M'_k=0$. It follows that
$$d((0,\ldots,0),(1,\ldots,1))\geq k,$$
as required.
\end{proof}

Part (ii) of the next result is contained in \cite[Proposition
4.9]{KST}.

\begin{thm}\label{Orc(G_N)} Let $G_N$ be a motion group with $N\ge 3$ and set
$A=C^*(G_N)$.
\begin{enumerate}
\item[{\rm (i)}] $SO(N)^{\wedge}$ is a $\sim$-connected subset of
$\widehat{G_N}$ and $\Orc(A)=\lfloor N/2\rfloor$.
\item
[{\rm (ii)}] $\Glimm(A)$ consists of the ideals $\ker\pi_{t,\sigma}$
$(t>0,\, \sigma\in SO(N-1)^\wedge)$ and the ideal
$$I_0:=\bigcap\{\ker\pi:\pi\in SO(N)^\wedge\}.$$
\item
[{\rm (iii)}] The complete regularization map
$\phi_A:\Prim(A)\to\Glimm(A)$ is closed.
\end{enumerate}
\end{thm}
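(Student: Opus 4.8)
The plan is to deduce all three parts from Proposition~\ref{G_Nsimpaths} together with the structural facts about $\widehat{G_N}$ recorded in the preliminary discussion of this section, reading (iii) off from (i) and (ii).

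For (i), write $k=\lfloor N/2\rfloor$. Since every point of $\mathcal{U}_N=\widehat{G_N}\setminus SO(N)^\wedge$ is a separated point, no such point is $\sim$-related to a point distinct from itself, so the $\sim$-component of any $\pi\in SO(N)^\wedge$ is contained in $SO(N)^\wedge$. On the other hand Proposition~\ref{G_Nsimpaths}(i) gives $d(\pi,\pi')\le k<\infty$ for all $\pi,\pi'\in SO(N)^\wedge$, so these points all lie in one $\sim$-component; hence $SO(N)^\wedge$ is a single $\sim$-component. Its diameter is $\le k$ by Proposition~\ref{G_Nsimpaths}(i) and $\ge d((1,\ldots,1),(0,\ldots,0))\ge k$ by Proposition~\ref{G_Nsimpaths}(ii), so it equals $k$; every other $\sim$-component is a singleton, of diameter $1\le k$ (here $N\ge3$ forces $k\ge1$). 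Thus $\Orc(A)=k$.

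For (ii), I would start from the fact recalled before Lemma~\ref{motion-sim} that each $\ker\pi_{t,\sigma}$ is a Glimm ideal. First observe that, $\pi_{t,\sigma}$ being a separated point, the set $\{\pi_{t,\sigma}\}$ is closed in $\widehat{G_N}$: any $\rho$ in its closure cannot be separated from $\pi_{t,\sigma}$ by disjoint open sets, so $\rho=\pi_{t,\sigma}$. Combining this with $\ker\pi_{t,\sigma}\in\Glimm(A)$ and the equivalence $P\supseteq G\iff\phi_A(P)=G$ (for $P\in\Prim(A)$, $G\in\Glimm(A)$)---which gives $\phi_A(\pi_{t,\sigma})=\ker\pi_{t,\sigma}$ and hence identifies the Glimm class of $\pi_{t,\sigma}$ with $\{\rho:\rho\supseteq\ker\pi_{t,\sigma}\}=\overline{\{\pi_{t,\sigma}\}}$---the Glimm class of $\pi_{t,\sigma}$ is exactly $\{\pi_{t,\sigma}\}$. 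Since the Glimm classes partition $\widehat{G_N}$, the Glimm class of any $\pi\in SO(N)^\wedge$ is disjoint from every $\{\pi_{t,\sigma}\}$, hence contained in $SO(N)^\wedge$, and by the $\sim$-connectedness established in (i) it equals $SO(N)^\wedge$. Therefore $SO(N)^\wedge$ is a single Glimm class with associated Glimm ideal $I_0=\bigcap\{\ker\pi:\pi\in SO(N)^\wedge\}$, and $\Glimm(A)=\{I_0\}\cup\{\ker\pi_{t,\sigma}:t>0,\ \sigma\in SO(N-1)^\wedge\}$.

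For (iii), since $G_N$ is second countable the algebra $A=C^*(G_N)$ is separable, hence $\sigma$-unital, so $\tau_{cr}$ coincides with the quotient topology on $\Glimm(A)$ and $\phi_A$ is a quotient map; it then suffices to check that for every closed $C\subseteq\widehat{G_N}$ the $\approx$-saturation $\phi_A^{-1}(\phi_A(C))$ is closed. Using the explicit Glimm classes from (ii), this saturation is $C$ when $C\cap SO(N)^\wedge=\emptyset$ and $C\cup SO(N)^\wedge$ otherwise, and in both cases it is closed because $SO(N)^\wedge$ is closed in $\widehat{G_N}$. The bulk of the work is thus routine assembly; the one point that will repay attention is the verification in (ii) that each Glimm class $\{\pi_{t,\sigma}\}$ is a singleton, since it is precisely this that prevents the Glimm class of $SO(N)^\wedge$ from spilling over into $\mathcal{U}_N$ and so underpins both (ii) and (iii).
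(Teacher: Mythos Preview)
Your proof is correct and follows essentially the same approach as the paper: part (i) is read off from Proposition~\ref{G_Nsimpaths} together with the fact that points of $\mathcal{U}_N$ are separated, part (ii) combines the previously noted fact that each $\ker\pi_{t,\sigma}$ is a Glimm ideal with the $\sim$-connectedness of $SO(N)^\wedge$, and part (iii) uses the explicit Glimm classes to check that $\phi_A^{-1}(\phi_A(C))$ is closed, invoking separability (via \cite{Lazar}) to identify $\tau_q$ with $\tau_{cr}$. Your write-up is slightly more detailed in places (e.g.\ the verification that $\{\pi_{t,\sigma}\}$ is closed and hence a singleton Glimm class), but the structure and key ideas match the paper's proof.
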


\begin{proof} (i) This follows from parts (i) and (ii) of
Proposition~\ref{G_Nsimpaths}.

(ii) Let $\pi\in SO(N)^\wedge$. As noted earlier, each ideal
$\ker\pi_{t,\sigma}$ is a Glimm ideal and, in particular, $
\ker\pi_{t,\sigma} \not\approx \ker\pi$. On the other hand, since
$SO(N)^{\wedge}$ is $\sim$-connected, $\ker\pi' \approx \ker\pi$
for all $\pi'\in SO(N)^\wedge$. Thus $I_0$ is the only other Glimm
ideal.

(iii) Let C be a closed subset of $\Prim(A)$. Then
$\phi_A^{-1}(\phi_A(C))$ is either $C$ or the union of $C$ with the
closed set $\{\ker\pi:\pi\in SO(N)^\wedge\}$, depending on whether
or not $C$ is disjoint from the latter set. Thus $\phi_A$ is closed
with respect to the quotient topology $\tau_q$ on $\Glimm(A)$. Since
$G_N$ is second countable, $A$ is separable and so $\tau_q$
coincides with $\tau_{cr}$ on $\Glimm(A)$ \cite[Theorem 2.6]{Lazar}
(see also \cite[Proposition 4.9]{KST}).
\end{proof}

We remark here that it can be easily seen that the Glimm ideal $I_0$
has no compact neighbourhood in the space $\Glimm(A)$. Thus, even
though $C^*(G_N)$ is separable, $\Glimm(A)$ is neither locally
compact nor first countable \cite[Theorem 3.3]{Lazar}. In
particular, $C^*(G_N)$ is not a $\mathcal{CR}$-algebra in the sense
of \cite{EW}. Moreover, it follows from Theorem~\ref{Orc(G_N)}(ii)
that $A/G$ is non-unital for all $G\in\Glimm(A)$ and so the results
of \cite{ASidealsX} show that there is substantial complexity in the
ideal structure of $M(A)$. In particular there is an injective map
from the lattice of $z$-filters on $\Glimm(A)$ to the lattice of
closed ideals of $M(A)$ \cite[Theorem 3.2]{ASidealsX} and each Glimm
class of $\Prim(M(A))$ which meets the canonical image of $\Prim(A)$
contains at least $2^c$ maximal ideals of $M(A)$ \cite[Theorem
5.3]{ASidealsX}.

\section{\bf  A lower bound for $\Orc(M(A))$.}

 In this section we establish a lower bound for $\Orc(M(A))$,
showing that $\Orc(M(A))\ge \Orc(A)$ under fairly general conditions
(Theorem~\ref{Orc(M(A))geqOrc(A)}). It follows from this that when
$A=C^*(G_N)$, where $G_N$ is a motion group, then
$\Orc(M(A)))\ge\lfloor N/2\rfloor$
(Corollary~\ref{Orc(M(A))motion}).

 For subsets $Y$ and $Z$ of a topological space $X$, let
$$d(Y, Z)=\inf\{ d(y,z): y\in Y,\ z\in Z\}$$
where $d(y,z)$ is as defined at the start of Section 2. For $n\ge
0$, let
$$Y^n=\{ x\in X: d(\{ x\}, Y)\le n\}$$
\noindent and $Y^{\infty}=\{ x \in X: d(\{x\}, Y)<\infty\}$. Note
that $Y^0=Y$, and that $Y^{\infty}=Y^{\Orc(A)}$ if $\Orc(A)<\infty$.

\bigskip

 We shall be interested in C$^*$-algebras $A$ which have the
property that $X^1$ is closed in $\Prim(A)$ whenever $X$ is a closed
subset of $\Prim(A)$. An elementary compactness argument shows that
this property holds when $\Prim(A)$ is compact, see \cite[Corollary
2.3]{Som}, and hence holds whenever $A$ is unital or is the
stabilization of a unital C$^*$-algebra. It also often holds when
$\Prim(A)$ is non-compact.

\bigskip

\begin{lemma}\label{X^1closed} Let $A$ be a C$^*$-algebra and suppose that there is
a closed, relatively discrete subset $Y$ of $\Prim(A)$ such that $Y$
contains all non-singleton $\sim$-components of $\Prim(A)$. Then
$X^1$ is closed whenever $X$ is a closed subset of $\Prim(A)$.
\end{lemma}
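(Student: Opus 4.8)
The plan is to take a closed subset $X\subseteq\Prim(A)$ and show that $X^1$ is closed by analysing separately the part of $X^1$ that lies in $Y$ and the part that lies outside $Y$. The key observation is that if $P\in X^1$ with $P\notin Y$, then $P$ must actually lie in $X$ itself: indeed, if $P\neq P'$ for some $P'\in X$ with $P\sim P'$, then $\{P,P'\}$ is a non-singleton $\sim$-component intersection (more precisely, $P$ lies in a non-singleton $\sim$-component), forcing $P\in Y$ by hypothesis. Hence
\[
X^1 = X \cup \bigl(X^1\cap Y\bigr).
\]
Since $Y$ is closed, it suffices to prove that $X^1\cap Y$ is closed in $Y$; and since $Y$ is relatively discrete, \emph{every} subset of $Y$ is relatively closed in $Y$, so $X^1\cap Y$ is closed in $Y$, hence closed in $\Prim(A)$. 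Then $X^1$ is a union of two closed sets and we are done.

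More carefully, I would first record the trivial containment $X\subseteq X^1$. Next I would establish the reverse-type statement: $X^1\setminus Y\subseteq X$. Fix $P\in X^1$ with $P\notin Y$. By definition of $X^1$ there is $P'\in X$ with $d(P,P')\le 1$, i.e.\ either $P=P'$ or $P\sim P'$. If $P=P'$ then $P\in X$ and we are done. If $P\sim P'$ with $P\neq P'$, then the $\sim$-component of $P$ contains at least two points, so it is a non-singleton $\sim$-component and therefore $P\in Y$ by hypothesis, contradicting $P\notin Y$. This proves $X^1\setminus Y\subseteq X$, whence $X^1\subseteq X\cup(X^1\cap Y)$; the reverse inclusion is clear since $X\subseteq X^1$ and $X^1\cap Y\subseteq X^1$. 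Thus $X^1 = X\cup(X^1\cap Y)$.

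Finally I would close the argument by noting that $X^1\cap Y$ is a subset of the relatively discrete set $Y$, hence relatively closed in $Y$; since $Y$ is closed in $\Prim(A)$, the set $X^1\cap Y$ is closed in $\Prim(A)$. As $X$ is closed by assumption, $X^1 = X\cup(X^1\cap Y)$ is a union of two closed subsets of $\Prim(A)$ and is therefore closed. I do not expect any real obstacle here: the proof is essentially a bookkeeping exercise once one makes the decomposition $X^1 = X\cup(X^1\cap Y)$, and the only point requiring a moment's care is the implication "$P\sim P'$, $P\neq P'$ $\Rightarrow$ $P$ lies in a non-singleton $\sim$-component", which is immediate from the definitions of the graph structure $(\Prim(A),\sim)$ recalled in Section~2. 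One might also remark after the proof that this lemma applies to $A=C^*(G_N)$ with $Y=\{\ker\pi:\pi\in SO(N)^\wedge\}$, which is closed and relatively discrete and, by Theorem~\ref{Orc(G_N)}(i) together with the fact that $\widehat{G_N}\setminus SO(N)^\wedge$ consists of separated points, contains every non-singleton $\sim$-component of $\widehat{G_N}$.
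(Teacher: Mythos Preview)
Your proof is correct and follows essentially the same approach as the paper: the paper writes $X^1 = X \cup (X\cap Y)^1$ and observes that $(X\cap Y)^1 \subseteq Y$, while you write $X^1 = X \cup (X^1\cap Y)$; both decompositions rest on the same observation (a point of $X^1$ not already in $X$ must lie in a non-singleton $\sim$-component, hence in $Y$) and both finish by using that every subset of the closed, relatively discrete set $Y$ is closed in $\Prim(A)$.
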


\begin{proof} Let $X$ be a closed subset of $\Prim(A)$. Then $X^1= X\cup
(X\cap Y)^1$ since $Y$ contains all the non-singleton
$\sim$-components of $\Prim(A)$. But $(X\cap Y)^1$ is contained in
$Y$ and hence is closed in $\Prim(A)$, because every subset of $Y$
is closed in $Y$ and therefore in $\Prim(A)$. Thus $X^1$ is the
union of two closed sets.
\end{proof}

\bigskip
\noindent When $A=C^*(G_N)$, with $G_N$ a motion group, we may take
$Y$ to be the closed, relatively discrete set $\widehat{SO(N)}$.
Then Lemma~\ref{X^1closed} implies that $X^1$ is closed whenever $X$
is a closed subset of $\Prim(A)$.


\bigskip

If $Y$ and $Z$ are compact subsets of a topological space $X$ such
that $d(Y,Z)\ge 2$ then a routine compactness argument shows that
there exist disjoint open subsets $U$ and $V$ of $X$ with
$Y\subseteq U$ and $Z\subseteq V$ \cite[Lemma 2.2]{Som}. The next
result extends this argument. We say that a topological space is
{\sl locally compact} if every point has a neighbourhood base of
compact sets.

\begin{lemma}\label{Lindelof-lemma} {\bf \cite[Lemma 4.1]{ASomSS}} Let $X$ be a locally compact space and let $Y$ and $Z$ be
subsets of $X$ which are Lindel\"{o}f in the relative topology. Then
the following are equivalent:
\begin{enumerate}
\item
[{\rm (i)}] the closure of $Y^1$ does not meet $Z$ and the closure
of $Z^1$ does not meet $Y$;
\item
[{\rm (ii)}] there exist disjoint open subsets $U$ and $V$ of $X$
with $Y\subseteq U$ and $Z\subseteq V$.
\end{enumerate}
\end{lemma}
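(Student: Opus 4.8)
Write $Y^1=\{x\in X:d(\{x\},Y)\le 1\}$ and $Z^1=\{x\in X:d(\{x\},Z)\le 1\}$, as in Section~4. The implication (ii)$\Rightarrow$(i) is elementary, and I would dispose of it first. Suppose $U,V$ are disjoint open sets with $Y\subseteq U$ and $Z\subseteq V$. If $x\in Y^1$ then either $x\in Y\subseteq U$, or $x\sim y$ for some $y\in Y\subseteq U$; in the second case $x\notin V$, since otherwise the disjoint open sets $U\ni y$ and $V\ni x$ would separate $y$ and $x$. Hence $Y^1\cap V=\emptyset$, so $Y^1\subseteq X\setminus V$, and since $X\setminus V$ is closed, $\overline{Y^1}\subseteq X\setminus V$; therefore $\overline{Y^1}\cap Z=\emptyset$. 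Interchanging $Y$ and $Z$ gives $\overline{Z^1}\cap Y=\emptyset$, which is (i).

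For (i)$\Rightarrow$(ii) I would first reduce to a one-sided statement: (ii) is equivalent to the existence of an open set $W$ with $Y\subseteq W$ and $\overline W\cap Z=\emptyset$. Indeed, given such a $W$, put $U=W$ and $V=X\setminus\overline W$; conversely, if $U,V$ are as in (ii) then $W=U$ works, because $U$, being disjoint from the open set $V$, satisfies $U\subseteq X\setminus V$, hence $\overline U\subseteq X\setminus V$ and $\overline U\cap Z=\emptyset$. So, assuming (i), the goal is to construct such a $W$. To build the ingredients: for each $y\in Y$ we have $y\notin\overline{Z^1}$, so local compactness provides a compact set $K_y$ with $y\in\interior(K_y)$ and $K_y\subseteq X\setminus\overline{Z^1}$; in particular $K_y\cap Z^1=\emptyset$, so every point of $K_y$ is at graph-distance at least $2$ from $Z$. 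Since $Y$ is Lindel\"of, countably many of the sets $\interior(K_y)$ cover $Y$; replacing the corresponding $K_y$ by their successive unions, we obtain an increasing sequence of compact sets $K_1\subseteq K_2\subseteq\cdots$ with $Y\subseteq\bigcup_n\interior(K_n)$ and $d(K_n,Z)\ge 2$ for all $n$. Using the other half of (i), namely $\overline{Y^1}\cap Z=\emptyset$, the same construction yields an increasing sequence of compact sets $L_1\subseteq L_2\subseteq\cdots$ with $Z\subseteq\bigcup_m\interior(L_m)$ and $d(L_m,Y)\ge 2$ for all $m$.

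I would then build open sets $U_1,U_2,\dots$ and $V_1,V_2,\dots$ by alternately processing $K_1,L_1,K_2,L_2,\dots$, maintaining that $Y\cap\interior(K_n)\subseteq U_n$, that $Z\cap\interior(L_m)\subseteq V_m$, and that $U_i\cap V_j=\emptyset$ for all $i,j$. The engine is \cite[Lemma 2.2]{Som}, which separates two \emph{compact} sets at graph-distance $\ge 2$ by disjoint open sets; I would first upgrade it to the statement that a Lindel\"of set can be separated from a compact set at graph-distance $\ge 2$ by disjoint open sets (cover the Lindel\"of set by the separating neighbourhoods of its points furnished by \cite[Lemma 2.2]{Som}, pass to a countable subcover, and run a zipper — this sub-zipper being folded into the main induction). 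When $U_n$ is to be produced, $V_1,\dots,V_{n-1}$ are already in hand with $V_j\subseteq\interior(L_j)\subseteq L_j$; since $d(L_j,Y)\ge 2$ and hence $d\bigl(L_j,Y\cap\interior(K_n)\bigr)\ge 2$, the upgraded lemma separates $Y\cap\interior(K_n)$ from each $L_j$ ($j<n$) by disjoint open sets $U_n^{(j)}$ and $\widetilde L_j\supseteq L_j$, and $U_n:=\bigcap_{j<n}U_n^{(j)}$ is an open set containing $Y\cap\interior(K_n)$ with $U_n\cap V_j\subseteq U_n^{(j)}\cap\widetilde L_j=\emptyset$ for $j<n$. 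The production of $V_n$ is symmetric, now also keeping away from $U_1,\dots,U_n$ via $d(K_i,Z)\ge 2$. Then $W:=\bigcup_n U_n$ is open, contains $Y=\bigcup_n\bigl(Y\cap\interior(K_n)\bigr)$, and is disjoint from the open set $V:=\bigcup_m V_m\supseteq Z$; thus $W\subseteq X\setminus V$, so $\overline W\cap Z\subseteq(X\setminus V)\cap Z=\emptyset$, and (ii) follows from the reduction above.

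The main obstacle is that $X$ is not assumed Hausdorff, so compact subsets need not be closed, and the familiar proof that a regular Lindel\"of space is normal — where at the $n$-th stage one simply deletes a finite union of closed sets — is unavailable. This is precisely why the construction must be routed through \cite[Lemma 2.2]{Som} rather than through ``shrinking to closures''. The delicate part is therefore the bookkeeping: one must keep the compact pieces produced at each stage at graph-distance $\ge 2$ from the relevant sets on the other side, which is exactly what is guaranteed by the fact that each $K_n$ lies outside $\overline{Z^1}$ and each $L_m$ outside $\overline{Y^1}$, and this has to be propagated carefully through the alternating induction.
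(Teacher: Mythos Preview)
The implication (ii)$\Rightarrow$(i) is fine, but for (i)$\Rightarrow$(ii) your ``upgraded lemma'' is false: in the one-point compactification $X=\NN\cup\{\infty\}$ take $A=\NN$ (Lindel\"of) and $B=\{\infty\}$ (compact); the space is Hausdorff so $d(A,B)=\infty\ge 2$, yet every neighbourhood of $\infty$ meets $A$. Your sketched proof of the upgrade (``pass to a countable subcover and run a zipper'') cannot succeed, because after covering $A$ by the $P_a$ you have only a countable \emph{intersection} $\bigcap_n Q_{a_n}\supseteq B$ on the other side, not a second countable family to zipper against. A secondary problem is that $Y\cap\interior(K_n)$ need not be Lindel\"of (open subsets of Lindel\"of spaces are not in general Lindel\"of), so even the hypotheses of your upgraded lemma are not met as written.

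The repair is to abandon the upgrade and run the standard zipper directly on $\interior(K_n)$ and $\interior(L_m)$. The point you are not exploiting is that $d(K_n,Z)\ge 2$ already yields $\overline{\interior(K_n)}\cap Z=\emptyset$: for each $z\in Z$, apply \cite[Lemma 2.2]{Som} to the compact pair $K_n$, $\{z\}$ to get an open neighbourhood of $z$ disjoint from an open set containing $K_n$, hence disjoint from $\interior(K_n)$. Symmetrically $\overline{\interior(L_m)}\cap Y=\emptyset$. With these two facts the textbook construction
\[
U_n'=\interior(K_n)\setminus\bigcup_{j\le n}\overline{\interior(L_j)},\qquad
V_m'=\interior(L_m)\setminus\bigcup_{i\le m}\overline{\interior(K_i)}
\]
gives disjoint open sets $U=\bigcup_n U_n'\supseteq Y$ and $V=\bigcup_m V_m'\supseteq Z$. (The present paper does not prove the lemma but cites \cite[Lemma 4.1]{ASomSS}; the argument there is presumably along these lines.)
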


 We will apply Lemma~\ref{Lindelof-lemma} to $X=\Prim(A)$, where
$A$ is a $\sigma$-unital C$^*$-algebra. Then $\Prim(A)$ is
$\sigma$-compact, and hence Lindel\"{o}f, so every closed subset of
$X$ is Lindel\"{o}f. Every compact subset of $X$ is Lindel\"{o}f
too, of course.

When $\Prim(A)$ is compact and $\Orc(A)<\infty$, it is automatic
that every Glimm class is $\sim$-connected \cite[Corollary
2.7]{Som}. When $\Prim(A)$ is non-compact, however, there may be
Glimm classes made up of more than one $\sim$-component, even when
$\Orc(A)<\infty$. Furthermore, the complete regularization map
$\phi_A$, which is automatically closed when $\Prim(A)$ is compact,
need not be closed in the general case.

The next result shows that these difficulties do not arise if
$\Orc(A)<\infty$ and $X^1$ is closed for every closed subset $X$ of
$\Prim(A)$.

\begin{prop}\label{phi-closed} Let $A$ be a $\sigma$-unital C$^*$-algebra. Then the following
are equivalent:
\begin{enumerate}
\item[{\rm (i)}] $X^{\infty}$ is closed whenever $X$ is a closed subset
of $\Prim(A)$;
\item[{\rm (ii)}]
 $\phi_A$ is a closed map and every Glimm class is
$\sim$-connected.
\end{enumerate}

\noindent In particular, if $\Orc(A)<\infty$ and $X^1$ is closed for
every closed subset $X$ of $\Prim(A)$ then conditions (i) and (ii)
both hold.
\end{prop}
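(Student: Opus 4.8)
The plan is to prove the equivalence (i)$\Leftrightarrow$(ii) and then deduce the final ``In particular'' clause from it. For the direction (i)$\Rightarrow$(ii), I would first show that every Glimm class is $\sim$-connected. Fix a Glimm class $[P]$ and suppose it splits into at least two $\sim$-components; pick $P$ in one component and $Q$ in another, so $d(P,Q)=\infty$, i.e. $Q\notin\{P\}^{\infty}$. By hypothesis applied to the closed set $X=\overline{\{P\}}$ (note $\{P\}^{\infty}=X^{\infty}$ since inseparability only depends on the point, not its closure, and $\overline{\{P\}}^{\,1}\supseteq\{P\}^1$—actually one should take $X=\overline{\{P\}}$ and use that $X^{\infty}$ is closed and $\sim$-saturated-ish), $X^{\infty}$ is a closed set containing $P$ but not $Q$. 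On the other hand $X^{\infty}$ is a union of complete $\sim$-components, hence closed under $\approx$ only if... here is the crux: I want to separate $P$ and $Q$ by a continuous bounded function on $\Prim(A)$, contradicting $P\approx Q$. Since $A$ is $\sigma$-unital, $\Prim(A)$ is $\sigma$-compact, so $X^{\infty}$ and its complement are Lindelöf, and I can invoke Lemma~\ref{Lindelof-lemma} (with $Y=X^{\infty}$, $Z=\{Q\}$, after checking local compactness of $\Prim(A)$—this holds since $\Prim(A)$ of any $C^*$-algebra is locally compact) to get disjoint open sets separating them; the condition (i) of that lemma holds because $\overline{Y^1}=\overline{Y}=Y$ misses $Q$ and $\overline{Z^1}$ misses $Y$ as $Z^1\subseteq X^{\infty}$... wait, $Z^1\subseteq X^{\infty}=Y$, so that fails. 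I would instead separate two \emph{whole} $\sim$-components: let $Y$ be the union of the $\sim$-components of $\Prim(A)$ meeting $X^{\infty}$ that are ``on the $P$ side'' and $Z$ the component of $Q$—both are closed-ish; more cleanly, use that $X^{\infty}$ is closed and open-in-its-Glimm-saturation is not automatic, so the right move is: $X^{\infty}$ closed $\Rightarrow$ $\Prim(A)\setminus X^{\infty}$ open, and one shows directly that the indicator-type function is continuous in $\tau_{cr}$. Then $\phi_A$ being closed: for closed $C\subseteq\Prim(A)$, $\phi_A^{-1}(\phi_A(C))$ is the $\approx$-saturation of $C$, which (once Glimm classes are $\sim$-connected) equals $\bigcup_{n}C^n$, and if $\Orc(A)<\infty$ this is $C^{\Orc(A)}=C^{\infty}$, closed by (i); for the general equivalence one argues $C^{\infty}=\bigcup C^n$ is the $\approx$-saturation and closedness of $\phi_A$ is exactly closedness of all such saturations, using that $\tau_q=\tau_{cr}$ on $\Glimm(A)$ for $\sigma$-unital $A$ (\cite[Theorem 2.6]{Lazar}).

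For (ii)$\Rightarrow$(i): let $X$ be closed in $\Prim(A)$. Since every Glimm class is $\sim$-connected, the $\approx$-saturation of $X$ coincides with $X^{\infty}$ (a point $\approx$-equivalent to some point of $X$ lies in the same Glimm class, which is a single $\sim$-component, hence is joined to $X$ by a finite walk). Now $\phi_A(X)$ is closed in $\Glimm(A)$ because $\phi_A$ is a closed map, and $X^{\infty}=\phi_A^{-1}(\phi_A(X))$ is closed because $\phi_A$ is continuous. That gives (i).

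Finally the ``In particular'' clause: assume $\Orc(A)<\infty$ and $X^1$ closed for every closed $X$. Then by induction $X^n$ is closed for every $n$ and every closed $X$ (apply the hypothesis repeatedly, using $X^{n+1}=(X^n)^1$), and since $\Orc(A)<\infty$ we have $X^{\infty}=X^{\Orc(A)}$, which is therefore closed; so (i) holds, hence (ii). I expect the main obstacle to be the careful separation argument in (i)$\Rightarrow$(ii): one must produce a bounded continuous function on $\Prim(A)$ separating two points in distinct $\sim$-components of the same Glimm class, and the cleanest route is via Lemma~\ref{Lindelof-lemma} applied not to single points but to suitable \emph{closed, $\sim$-saturated} Lindelöf sets $Y,Z$ with $d(Y,Z)=\infty$—identifying the correct $Y$ and $Z$ (e.g. $Y=X^{\infty}$ for $X=\overline{\{P\}}$ and $Z$ the $\sim$-component of $Q$, checking that $\overline{Z^1}\cap Y=\emptyset$ because $d(Y,Z)=\infty$ forces $Z^1\cap Y=\emptyset$ and $Y$ closed) and verifying the hypotheses of the lemma is where the real work lies.
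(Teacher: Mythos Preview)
Your (ii)$\Rightarrow$(i) and the ``In particular'' clause are correct and match the paper's argument exactly.

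The gap is in (i)$\Rightarrow$(ii). You correctly identify that the heart of the matter is to show that if $d(P,Q)=\infty$ then $P\not\approx Q$, i.e.\ there exists $f\in C^b(\Prim(A))$ with $f(P)\neq f(Q)$. You set up $Y=\overline{\{P\}}^{\infty}$ and $Z=\overline{\{Q\}}^{\infty}$ (both closed by (i), both $\sim$-saturated, hence $Y^1=Y$ and $Z^1=Z$), and Lemma~\ref{Lindelof-lemma} does give you disjoint open sets $U\supseteq Y$, $V\supseteq Z$. But this is where your argument stops, and it is not enough: $\Prim(A)$ is not Hausdorff, let alone normal, so disjoint open sets do not by themselves yield a separating continuous function. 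Your ``indicator-type function'' idea would need $Y$ to be clopen, and there is no reason the open complement of $Y$ should be closed.

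The paper closes this gap by introducing an auxiliary quotient. Define $P\diamond Q$ iff $d_A(P,Q)<\infty$; this is an equivalence relation, and set $W=\Prim(A)/\diamond$ with the quotient topology and quotient map $q$. Condition (i) says exactly that $q^{-1}(q(X))=X^{\infty}$ is closed for every closed $X$, so $q$ is a closed map; and $W$ is $T_1$ since each $q$-fibre is a closed $\sim$-component. Now Lemma~\ref{Lindelof-lemma} is applied not to produce a function directly, but to separate $q^{-1}(Y')$ and $q^{-1}(Z')$ for any disjoint closed $Y',Z'\subseteq W$; combined with the standard characterization of closed quotient maps, this yields disjoint \emph{saturated} open sets, hence disjoint open sets in $W$. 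Thus $W$ is normal and $T_1$, so Urysohn applies on $W$, and the resulting functions pull back to $C^b(\Prim(A))$. This forces $\diamond$ to coincide with $\approx$, giving both that Glimm classes are $\sim$-connected and (since $q=\phi_A$) that $\phi_A$ is closed. The missing idea in your proposal is precisely this passage to the $\diamond$-quotient, which is where normality --- and hence the separating function --- actually lives.
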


\begin{proof} (i) $\Rightarrow$ (ii). For $P, Q\in \Prim(A)$ define $P\diamond Q$
if $d_A(P,Q)<\infty$. Then $\diamond$ is an equivalence relation on
$\Prim(A)$ and, for $P,Q\in\Prim(A)$, $P\diamond Q$ implies
$P\approx Q$. Set $W=\Prim(A)/\diamond$ equipped with the quotient
topology, and let $q:\Prim(A)\to W$ be the quotient map. If $X$ is a
closed subset of $\Prim(A)$ then $q^{-1}(q(X))=X^{\infty}$, which is
closed by (i). Hence $q$ is a closed map. If $P\in\Prim(A)$ and
$Q\in\overline{\{P\}}$ then $Q\sim P$. Thus
$\{q(P)\}=q(\overline{\{P\}})$, which is closed in $W$, and so $W$
is a $T_1$-space.

Let $Y$ and $Z$ be non-empty, disjoint closed subsets of $W$. Then
$Y':=q^{-1}(Y)$ and $Z':=q^{-1}(Z)$ are disjoint closed
$\sim$-saturated subsets of $\Prim(A)$. Since $A$ is
$\sigma$-unital, $\Prim(A)$ is $\sigma$-compact and hence
Lindel\"{o}f. Thus $Y'$ and $Z'$ are Lindel\"{o}f, so
Lemma~\ref{Lindelof-lemma} implies the existence of disjoint open
sets $U$ and $V$ containing $Y'$ and $Z'$ respectively.

We now use a standard characterization (see \cite[7.2.14]{Joshi}): a
quotient map $p:X\to D$ is closed if and only if whenever $d\in D$
and $G$ is an open set containing $p^{-1}(d)$ then there exists a
saturated open set $H$ such that $p^{-1}(d)\subseteq H\subseteq G$
(where $H$ is saturated if $H=p^{-1}(p(H))$). Applying this
characterization in the present case to each of the points of $Y$
and $Z$ relative to $U$ and $V$ we obtain $\diamond$-saturated open
sets $U'$ and $V'$ such that $Y'\subseteq U'\subseteq U$ and
$Z'\subseteq V'\subseteq V$. Hence $q(U')$ and $q(V')$ are disjoint
open sets of $W$ containing $Y$ and $Z$ respectively. Hence $W$ is
normal.

Since $W$ is normal and $T_1$, any two distinct points in $W$ can be
separated by a continuous real-valued function. It follows that
$\diamond$ coincides with $\approx$ on $\Prim(A)$ so that
$W=\Glimm(A)$ as sets and $q=\phi_A$. Thus each Glimm class is
$\sim$-connected and $\phi_A$ is closed for the quotient topology on
$\Glimm(A)$. Since we have seen that the quotient topology is normal
and $T_1$, hence completely regular, it coincides with $\tau_{cr}$
(see \cite[Theorem 2.6]{Lazar} for a more general result).


(ii) $\Rightarrow$ (i). (This does not need the $\sigma$-unital
hypothesis). Let $X$ be a closed subset of $\Prim(A)$. Since
$\phi_A$ is a closed and continuous map, $\phi_A^{-1}(\phi_A(X))$ is
closed. But since each Glimm class is $\sim$-connected,
$\phi_A^{-1}(\phi_A(X))=X^{\infty}$.

Finally, suppose that $\Orc(A)<\infty$ and that $X^1$ is closed for
every closed subset $X$ of $\Prim(A)$. Then, for such $X$, the sets
$X^1,(X^1)^1, \ldots,X^{\Orc(A)}(=X^{\infty})$ are all closed
subsets of $\Prim(A)$. \end{proof}

 For a topological space $X$, we now define a \emph{chain of
length} $n$ on $X$ to be a collection of $n$ closed subsets $X_1,
\ldots, X_n$ with the following properties:
\begin{enumerate}
\item[{\rm (i)}] $\bigcup_{i=1}^n X_i=X$;
\item[{\rm (ii)}] $X_i$ and $X_j$ are disjoint if $|i-j|>1$;
\item[{\rm (iii)}] if $n>1$ then the (open) sets $X_1\setminus X_2$ and
$X_n\setminus X_{n-1}$ are non-empty.
\end{enumerate}
\noindent A chain of length $n$ is said to be {\sl admissible} if
there exist $x\in X_1\setminus X_2$ and $y\in X_n\setminus X_{n-1}$
such that $d(x,y)<\infty$. Note that this further condition (in
addition to (i) and (ii)) implies that $X_i\cap X_{i+1}$ is
non-empty for $i=1,\ldots, n-1$, for otherwise $x$ and $y$ would
belong to different clopen subsets of $X$. It was shown in
\cite[Lemma 2.1]{Som} that if $X_1, \ldots, X_n$ is a chain on $X$
of length $n>1$ and $x,y\in X$ with $x\in X_1\setminus X_2$ and
$y\in X_n\setminus X_{n-1}$ then $d(x,y)\ge n$. Hence $\Orc(X)$ is
greater than or equal to the length of any admissible chain on $X$.

\begin{lemma}\label{admissible-chain} Let $A$ be a $\sigma$-unital C$^*$-algebra and suppose that
$X^1$ is closed whenever $X$ is a closed subset of $\Prim(A)$. Let
$\mathcal{S}$ be the class of subsets of $\Prim(A)$ which are either
compact or closed. Suppose that $X, Y\in\mathcal{S}$ with $d(X,Y)\ge
k\ge 2$ and with $X\cup Y$ contained in a single $\sim$-component.
Then there is an admissible chain $X_1, \ldots, X_k$ of closed
subsets of $\Prim(A)$ with $X\subseteq X_1\setminus X_2$ and
$Y\subseteq X_k\setminus X_{k-1}$.
\end{lemma}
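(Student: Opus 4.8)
The plan is to construct the chain explicitly by iterating the ``thickening'' operation. Since $d(X,Y)\ge k\ge 2$, in particular both $X$ and $Y$ are non-empty, and since $X\cup Y$ lies in a single $\sim$-component there exist $x\in X$ and $y\in Y$ with $d(x,y)<\infty$. I would set
$$X_1 = X,\qquad X_i = \overline{X^{\,i-1}}\quad(2\le i\le k-1),\qquad X_k = \Prim(A)\setminus\bigcup_{j=1}^{k-1}(X_j\setminus X_{j+1})^{\circ},$$
or, more cleanly, I would first handle the ``generic'' situation and then patch the last set. Concretely: for $1\le i\le k-1$ put $X_i=\overline{X^{\,i-1}}$ (with $X_1=X$ since $X$ is closed), and let $X_k$ be the complement of $X^{\,k-2}$ together with $Y$; one then verifies $X_k$ is closed. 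Each $X_i$ for $i\le k-1$ is closed: $X^{i-1}$ is closed because $X$ is closed and the hypothesis that $Z^1$ is closed for every closed $Z$ lets us iterate ($X^{i-1}=((X^{1})^{1}\cdots)^{1}$), so no closure is actually needed there. The key numerical input is $d(X,Y)\ge k$, which guarantees $Y\cap X^{\,k-2}=\emptyset$, so that $Y$ is disjoint from $X_1,\dots,X_{k-1}$ and can be absorbed into $X_k$.

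The verification then breaks into the three defining properties of a chain plus admissibility. Property (i), that the $X_i$ cover $\Prim(A)$, holds by construction once $X_k$ is taken to be (the closure of) the complement of $\bigcup_{i<k}(X_i\setminus X_{i+1})$. Property (ii), disjointness of $X_i$ and $X_j$ for $|i-j|>1$: for $i,j\le k-1$ this is the standard fact that a point at distance exactly $i-1$ from $X$ cannot simultaneously be at distance $j-1$ when $|i-j|>1$ — more precisely one uses that $X_i\cap X_{i+1}$ catches the ``boundary'' and $X_i\setminus X_{i+1}=\{P: d(P,X)=i-1\}$, whence $X_i$ and $X_j$ overlap only for adjacent indices; the case involving $X_k$ uses $d(X,Y)\ge k$ to keep $Y$ away from $X_{k-2}$ and below. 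Property (iii) and admissibility: $x\in X\subseteq X_1$ and $x\notin X_2$ since $d(x,X)=0<1$, so $X_1\setminus X_2$ is non-empty and contains $X$; similarly $y\in Y\subseteq X_k$ and, since $d(Y,X)\ge k$ forces $d(y,X)\ge k>k-2$, we get $y\notin X_{k-1}$, so $Y\subseteq X_k\setminus X_{k-1}$; finally $d(x,y)<\infty$ gives admissibility.

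The main obstacle I anticipate is the treatment of the last set $X_k$: unlike $X_1,\dots,X_{k-1}$, which arise from the closed-thickening hypothesis applied to $X$, the set $X_k$ has to simultaneously (a) be closed, (b) contain all of $Y$, (c) contain the ``far'' complement so that the union is everything, and (d) stay disjoint from $X_{k-2}$. Getting (a) and (d) together is the delicate point: one wants $X_k=\Prim(A)\setminus(X^{\,k-2}\setminus X^{\,k-1})^{\circ}$ or a similarly engineered set, and must check, using $d(X,Y)\ge k$ and the fact that $X^{j}$ is closed for all $j$, that this set is closed, misses $X_{k-2}=\overline{X^{\,k-3}}$, meets $X_{k-1}$ exactly in the layer at distance $k-2$, and contains $Y$. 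Here the closedness hypothesis on thickenings is used essentially, since without it $X^{\,k-2}$ need not be closed and $X_k$ could fail to be closed. Everything else — the covering property, the non-emptiness of the two end strips, the distance bound $d(x,y)<\infty$ coming from the single-component hypothesis — is then routine bookkeeping with the definitions of $X^n$ and of $d$ from the start of Section~4.
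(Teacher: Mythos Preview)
Your construction has a genuine gap that cannot be patched by adjusting only the last set $X_k$. With $X_i=X^{\,i-1}$ for $1\le i\le k-1$ the sets are \emph{nested}: $X^0\subseteq X^1\subseteq X^2\subseteq\cdots$, since $X^{\,i-1}=\{P:d(P,X)\le i-1\}$. Consequently $X_1\cap X_3=X\cap X^{2}=X\neq\emptyset$, which already violates chain condition~(ii). Your verification of~(ii) tacitly switches to the ``shells'' $\{P:d(P,X)=i-1\}$ when you write $X_i\setminus X_{i+1}=\{P:d(P,X)=i-1\}$; in fact $X_i\setminus X_{i+1}=X^{\,i-1}\setminus X^{\,i}=\emptyset$ for all $i<k-1$. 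One might try to repair this by using annuli such as $X^{\,i-1}\setminus X^{\,i-3}$, but these are only locally closed, and there is no reason for them to be closed in $\Prim(A)$. A further issue is that $X_1=X$ need not be closed at all when $X$ is merely compact (recall $X\in\mathcal{S}$ allows either).

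The paper's proof avoids this by working from \emph{both} ends simultaneously and invoking Lemma~\ref{Lindelof-lemma} at each step. One first observes that $d(X,Y^{\,k-2})\ge 2$; since $X$ and $Y^{\,k-2}$ are Lindel\"of (closed in a $\sigma$-compact space, or compact), Lemma~\ref{Lindelof-lemma} produces disjoint open sets $U_1\supseteq X$ and $V_1\supseteq Y^{\,k-2}$, and one sets $X_1=\Prim(A)\setminus V_1$, $Y_2=\Prim(A)\setminus U_1$. Inductively, for $2\le i\le k-1$ one separates $X_1\cup\cdots\cup X_{i-1}$ from $Y^{\,k-(i+1)}$ by disjoint open sets $U_i,V_i$ and puts $X_i=(\Prim(A)\setminus V_i)\cap Y_i$, $Y_{i+1}=\Prim(A)\setminus U_i$; finally $X_k=Y_k$. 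The point is that the chain pieces are complements of open sets (hence closed), they cover because each pair $X_i,Y_{i+1}$ covers, and disjointness of non-adjacent pieces is built in via the successive separations. The hypothesis that $Z^1$ is closed is used to guarantee that $Y^{\,j}$ is closed (hence Lindel\"of) so that Lemma~\ref{Lindelof-lemma} applies at every stage --- not to define the chain sets themselves.
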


\begin{proof} We follow the method of \cite[Lemma 2.4]{Som}. First note that
if $X\in \mathcal{S}$ then $X^1$ is closed, either by assumption if
$X$ is closed, or by \cite[Corollary 2.3]{Som} if $X$ is compact.
Thus the hypotheses imply that the sets $X^n$ and $Y^n$ are closed
for all $n\ge 1$. Since $d(X, Y^{k-2})\ge 2$,  and $X$ and $Y^{k-2}$
are Lindel\"{o}f, it follows from Lemma~\ref{Lindelof-lemma} that
there are disjoint open sets $U_1\supseteq X$ and $V_1\supseteq
Y^{k-2}$. Set $X_1=\Prim(A)\setminus V_1$ and $Y_2=\Prim(A)\setminus
U_1$ and note that $X_1$ and $Y_2$ are closed and that
 $X\subseteq X_1\setminus Y_2$ and $Y^{k-2}\subseteq
Y_2\setminus X_1$. If $k=2$ then $X_1$ and $X_2=Y_2$
have the required properties.

Otherwise, if $k>2$ then $d(X_1, Y^{k-3})\ge 2$ since $X_1$ is
disjoint from $Y^{k-2}$, and $X_1$ and $Y^{k-3}$ are closed and
hence Lindel\"{o}f. For $k>2$ we define inductively, for $i=2,
\ldots, k-1$, $X_i=(\Prim(A)\setminus V_i)\cap Y_i$ and
$Y_{i+1}=\Prim(A)\setminus U_i$, where $U_i$ and $V_i$ are disjoint
open sets containing $X_1\cup\ldots\cup X_{i-1}$ and $Y^{k-(i+1)}$
respectively. Note that for $2\le i\le k-2$, $d((X_1\cup\ldots\cup
X_i), Y^{k-(i+2)})\ge 2$, and $X_1\cup\ldots\cup X_i$ and
$Y^{k-(i+2)}$ are Lindel\"{o}f, so the induction can proceed.
Finally set $X_k=Y_k$. Then $Y\subseteq X_k\setminus X_{k-1}$, and
it is easy to check that $X_1, \ldots, X_k$ is an admissible chain
of length $k$.
\end{proof}

\bigskip

 The next lemma
  asserts a sort
of `normality' for $\Prim(A)$ when $A$ is a $\sigma$-unital
$C^*$-algebra. Recall from Section $2$ that, for $P\in\Prim(A)$, $\tilde P \in \Prim(M(A))$ satisfies $\tilde P \cap A = P$. For $X\subseteq \Prim(A)$, let
$\tilde{X}=\{\tilde{P}:P\in X\}$.

\bigskip

\begin{lemma}\label{disjoint-prim} {\bf \cite[Lemma 2.1]{ASomSS}} Let $A$ be a $\sigma$-unital C$^*$-algebra and let $X$ and $Y$
be disjoint closed subsets of $\Prim(A)$. Then the closures of
$\tilde X$ and $\tilde Y$ are disjoint in $\Prim(M(A))$.
\end{lemma}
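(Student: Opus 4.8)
\textbf{Plan for the proof of Lemma~\ref{disjoint-prim}.}

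The plan is to reduce the statement about $\Prim(M(A))$ to a statement about $\Glimm(M(A))$, which by the homeomorphism $\iota$ is the Stone--\v Cech compactification $\beta\Glimm(A)$, and there to exploit that disjoint closed sets in a normal space have disjoint closures in the Stone--\v Cech compactification. First I would recall that since $A$ is $\sigma$-unital, $\Glimm(A)$ is $\sigma$-compact, Lindel\"of and hence normal (as noted in Section~2). Next, I would push $X$ and $Y$ forward under the complete regularization map $\phi_A$: the sets $\phi_A(X)$ and $\phi_A(Y)$ need not themselves be closed or disjoint in $\Glimm(A)$, so the first real step is to replace them by genuinely disjoint closed sets. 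Because $X$ and $Y$ are disjoint closed subsets of $\Prim(A)$, one can separate them by a continuous function: more precisely, I would use normality of $\Glimm(A)$ together with the fact that the $\approx$-saturations of $X$ and $Y$, while possibly overlapping, can be handled by choosing $f\in C^b(\Glimm(A))$ (equivalently $f\in C^b(\Prim(A))$ constant on Glimm classes) separating suitable closed neighbourhoods --- here the standard trick is to first enlarge to disjoint closed $\approx$-saturated supersets of $\widetilde X$ and $\widetilde Y$ in $\Prim(M(A))$ and work downstairs.

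Concretely, the key steps in order are: (1) observe that $\widetilde X\cap A$-type considerations give $\phi_{M(A)}(\widetilde P)=\iota(\phi_A(P))$ for $P\in\Prim(A)$, so it suffices to separate $\iota(\phi_A(X))$ and $\iota(\phi_A(Y))$ by disjoint closed sets in $\Glimm(M(A))=\beta\Glimm(A)$, since preimages under the continuous closed-ish structure then separate $\widetilde X$ and $\widetilde Y$ in $\Prim(M(A))$; (2) inside $\Glimm(A)$, produce disjoint \emph{open} sets $O_1\supseteq\phi_A(X)$, $O_2\supseteq\phi_A(Y)$ --- this is where one needs that $X,Y$ are disjoint and closed in $\Prim(A)$, and it comes from choosing a continuous real function on $\Prim(A)$ that is $0$ on $X$ and $1$ on $Y$ (using that disjoint closed subsets of $\Prim(A)$, with $A$ separable/$\sigma$-unital, can be functionally separated via $C^b(\Prim(A))$ after passing through $\Glimm(A)$; if direct functional separation is not available one instead uses Lemma~\ref{Lindelof-lemma} style arguments or the normality argument from Proposition~\ref{phi-closed}); (3) take closures in $\beta\Glimm(A)$: since $O_1$ and $O_2$ are disjoint open subsets of the normal space $\Glimm(A)$ contained in cozero sets, their closures in $\beta\Glimm(A)$ are disjoint (a standard property of $\beta$); (4) pull these disjoint closed sets back through $\iota$ and then through $\phi_{M(A)}$, which is continuous, to obtain disjoint closed subsets of $\Prim(M(A))$ containing $\widetilde X$ and $\widetilde Y$ respectively, whose existence forces $\overline{\widetilde X}\cap\overline{\widetilde Y}=\emptyset$.

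The main obstacle I anticipate is step~(2): $\phi_A(X)$ and $\phi_A(Y)$ genuinely can fail to be disjoint (two points of $\Prim(A)$ in disjoint closed sets may lie in the same Glimm class) and can fail to be closed, so one cannot naively separate them in $\Glimm(A)$. The fix is to separate not in $\Glimm(A)$ but using a bounded continuous function on $\Prim(A)$ that is constant on Glimm classes only \emph{after} one has arranged $X$ and $Y$ to be $\approx$-saturated --- and in general they are not. The cleanest route around this, which I expect the authors take, is to work directly in $\Prim(M(A))$: replace $X$, $Y$ by their images $\widetilde X$, $\widetilde Y$, note these are contained in the clopen-ish preimages of complementary pieces, and invoke that $\Prim(M(A))$, while not normal, does have disjoint closures for $\widetilde X,\widetilde Y$ precisely because the obstruction lives in $\Glimm(M(A))=\beta\Glimm(A)$ where normality of $\Glimm(A)$ saves the day. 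Handling this reduction carefully --- i.e.\ showing that the only way $\overline{\widetilde X}$ and $\overline{\widetilde Y}$ could meet is via their images in $\Glimm(M(A))$, and that these images are the closures of $\iota(\phi_A(X))$, $\iota(\phi_A(Y))$ in $\beta\Glimm(A)$ --- is the technical heart of the argument.
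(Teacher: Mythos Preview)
The paper does not supply a proof of this lemma (it simply cites \cite[Lemma~2.1]{ASomSS}), but your plan has a genuine gap. Your strategy is to separate $X$ and $Y$ via their images in $\Glimm(A)$, exploit normality there, transport the separation to $\beta\Glimm(A)\cong\Glimm(M(A))$, and pull back to $\Prim(M(A))$. The obstacle you flag in step~(2) is fatal rather than merely technical: $X$ and $Y$ can lie entirely inside a \emph{single} Glimm class, so that $\phi_A(X)=\phi_A(Y)$ and no functional-separation manoeuvre in $\Glimm(A)$ can distinguish them. For a concrete instance from this very paper, take $A=C^*(G_N)$ with $N\geq 3$ and $X=\{\ker\pi_1\}$, $Y=\{\ker\pi_2\}$ for distinct $\pi_1,\pi_2\in SO(N)^{\wedge}$: these are disjoint closed points of $\Prim(A)$, yet $\phi_A(\ker\pi_1)=\phi_A(\ker\pi_2)=I_0$ by Theorem~\ref{Orc(G_N)}(ii). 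Step~(4) then fails as well: preimages under $\phi_{M(A)}$ of disjoint closed sets in $\Glimm(M(A))$ can never separate $\tilde X$ from $\tilde Y$ when these already share a Glimm class. The content of the lemma is precisely that $\overline{\tilde X}$ and $\overline{\tilde Y}$ are disjoint in the \emph{finer} space $\Prim(M(A))$ even when their Glimm images coincide, so the reduction to $\Glimm$ cannot succeed.

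The argument that works is ideal-theoretic and bypasses $\Glimm(A)$ altogether. With $I=\ker X$ and $J=\ker Y$, disjointness of the closed sets $X$ and $Y$ gives $I+J=A$. Since the closure of a subset of a primitive ideal space is the hull of its kernel, $\overline{\tilde X}={\rm hull}_{M(A)}(\tilde I)$ where $\tilde I:=\bigcap_{P\in X}\tilde P=\{m\in M(A):mA+Am\subseteq I\}$ is exactly the kernel of the canonical extension $M(A)\to M(A/I)$, and similarly for $\tilde J$. The lemma therefore reduces to showing $\tilde I+\tilde J=M(A)$. From $I+J=A$ one gets $A/(I\cap J)\cong A/I\oplus A/J$, so $(0,1)\in M(A/I)\oplus M(A/J)=M(A/(I\cap J))$; by the noncommutative Tietze extension theorem for $\sigma$-unital $C^*$-algebras (surjectivity of $M(A)\to M(A/K)$ for every closed ideal $K$) this lifts to some $m\in M(A)$ with $m\in\tilde I$ and $1-m\in\tilde J$, whence $1\in\tilde I+\tilde J$. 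This is the point at which $\sigma$-unitality is genuinely used.
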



\bigskip

\begin{thm}\label{Orc(M(A))geqOrc(A)} Let $A$ be a $\sigma$-unital C$^*$-algebra and suppose that
$X^1$ is closed whenever $X$ is a closed subset of $\Prim(A)$. Then
$\Orc(M(A))\ge \Orc(A)$.
\end{thm}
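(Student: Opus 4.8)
The plan is to establish the inequality separately according to whether $\Orc(A)$ is finite or infinite, in both cases transferring a chain in $\Prim(A)$ to a chain in $\Prim(M(A))$ via the map $P\mapsto\tilde P$ and its relationship to closures. If $\Orc(A)=1$ there is nothing to prove (since $\Orc(M(A))\ge 1$ always), so we may assume $\Orc(A)\ge 2$. First I would treat the case $\Orc(A)<\infty$. Pick $k$ with $2\le k\le\Orc(A)$ and a $\sim$-component $C$ of $\Prim(A)$ containing points $P,Q$ with $d_A(P,Q)=k$. Applying Lemma~\ref{admissible-chain} with $X=\{P\}$ and $Y=\{Q\}$ (both compact, hence in $\mathcal S$), we obtain an admissible chain $X_1,\dots,X_k$ of closed subsets of $\Prim(A)$ with $P\in X_1\setminus X_2$ and $Q\in X_k\setminus X_{k-1}$.

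The next step is to push this chain forward. Set $Z_i=\overline{\tilde{X_i}}$ in $\Prim(M(A))$ for $1\le i\le k$. I expect to need a few auxiliary facts here: that $\widetilde{(\cdot)}$ respects the relevant structure, namely that $\overline{\tilde X}\cap A$-canonical-image relates correctly to $X$, and crucially that when $X_i$ and $X_j$ are disjoint closed subsets of $\Prim(A)$ with $|i-j|>1$, their images $Z_i$ and $Z_j$ are disjoint by Lemma~\ref{disjoint-prim}. This gives property (ii) of a chain for the $Z_i$. For property (i), $\bigcup_i X_i=\Prim(A)$ is dense in $\Prim(M(A))$ (via the canonical embedding), so $\bigcup_i Z_i\supseteq\overline{\widetilde{\Prim(A)}}=\Prim(M(A))$, giving a chain covering (one may need to intersect with the relevant sets or argue that the $Z_i$ already cover since each is closed and their union is closed and dense). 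For property (iii) and admissibility, I must produce points $\tilde P\in Z_1\setminus Z_2$ and $\tilde Q\in Z_k\setminus Z_{k-1}$ with $d_{M(A)}(\tilde P,\tilde Q)<\infty$; the separation $\tilde P\notin Z_2$ follows because $\tilde P\notin\overline{\tilde{X_2}}$ — here I use that $\{P\}$ and $X_2$ are disjoint closed sets in $\Prim(A)$ (as $P\in X_1\setminus X_2$ and $X_2$ is closed), so by Lemma~\ref{disjoint-prim} the closures of $\widetilde{\{P\}}$ and $\tilde{X_2}$ are disjoint, whence $\tilde P\notin Z_2$; similarly $\tilde Q\notin Z_{k-1}$. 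Finiteness of $d_{M(A)}(\tilde P,\tilde Q)$ should follow because $\tilde P$ and $\tilde Q$ lie in the same $\sim$-component of $\Prim(M(A))$ — indeed $P\sim_A Q$-connectedness combined with the fact that $P\sim_A P'$ implies $\tilde P\sim_{M(A)}\tilde P'$ (inseparable primitive ideals in $A$ lift to inseparable ones in $M(A)$, which should be recoverable from Lemma~\ref{disjoint-prim} applied to suitable closed sets, or is standard). Then $\cite[\text{Lemma 2.1}]{Som}$ gives $d_{M(A)}(\tilde P,\tilde Q)\ge k$, so $\Orc(M(A))\ge k$; letting $k\to\Orc(A)$ yields the result. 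For the case $\Orc(A)=\infty$, the same construction applied for each $k\ge 2$ (choosing for each $k$ a component and a pair at distance $\ge k$, which exists by definition of $\Orc$) shows $\Orc(M(A))\ge k$ for all $k$, hence $\Orc(M(A))=\infty$.

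The main obstacle I anticipate is verifying that the forward images $Z_i=\overline{\tilde{X_i}}$ genuinely form a chain in the precise sense defined — in particular confirming property (i) (that they cover $\Prim(M(A))$, not merely a dense subset) and checking that the non-emptiness/separation conditions survive the closure operation. The disjointness for $|i-j|>1$ is handed to us cleanly by Lemma~\ref{disjoint-prim}, and the separation of the endpoint witnesses $\tilde P,\tilde Q$ from the adjacent sets is likewise a direct application of that lemma; so the real care is in the bookkeeping that $\bigcup_i\overline{\tilde{X_i}}$ exhausts $\Prim(M(A))$ and that one has not accidentally lost condition (iii). A secondary point needing attention is the claim that $\tilde P$ and $\tilde Q$ are connected by a finite $\sim$-walk in $\Prim(M(A))$; if a direct argument that $P\sim_A P'\Rightarrow\tilde P\sim_{M(A)}\tilde P'$ is not immediately available from the cited results, one can instead observe that the admissible chain structure of the $Z_i$ already forces $Z_i\cap Z_{i+1}\ne\emptyset$, and chase a walk through these intersections — but one must then ensure such intersection points can be linked to $\tilde P$ and $\tilde Q$, which again reduces to the lifting property for $\sim$. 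Once these are pinned down, the rest is the routine verification that $\cite[\text{Lemma 2.1}]{Som}$ applies.
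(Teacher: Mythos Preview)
Your proposal is correct and follows essentially the same route as the paper: build an admissible chain in $\Prim(A)$ via Lemma~\ref{admissible-chain}, take closures $Z_i=\overline{\tilde X_i}$ in $\Prim(M(A))$, use Lemma~\ref{disjoint-prim} for the disjointness conditions, and invoke \cite[Lemma~2.1]{Som}. The paper disposes of your two flagged concerns more directly than you anticipate: it treats finite and infinite $\Orc(A)$ uniformly (for each $k\ge 2$ choose $P,Q$ with $k\le d_A(P,Q)<\infty$) and obtains admissibility immediately from the elementary inequality $d_{M(A)}(\tilde P,\tilde Q)\le d_A(P,Q)$; and for $\tilde P\notin Z_2$ one need not (and cannot, since $\{P\}$ is generally not closed in $\Prim(A)$) apply Lemma~\ref{disjoint-prim} to $\{P\}$ and $X_2$---the open set $\{R\in\Prim(M(A)):R\not\supseteq\ker X_2\}$ already separates $\tilde P$ from $\tilde X_2$.
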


\begin{proof} We may suppose that $\Orc(A)\geq 2$ for otherwise the result is
trivial. Let $k\in \NN$ with $2\leq k\leq \Orc(A)$. Then there exist
$P, Q\in \Prim(A)$ such that $k\leq d_A(P,Q)<\infty$.
By Lemma~\ref{admissible-chain}, applied to the compact sets $\{P\}$
and $\{Q\}$, there is an admissible chain $X_1, \ldots X_k$ on
$\Prim(A)$, of length $k$. For each $1\le i\le k$, let $Y_i$ be the
closure of $\tilde X_i=\{\tilde R: R\in X_i\}$ in $\Prim(M(A))$. It
then follows, using Lemma~\ref{disjoint-prim} to check the
preservation of disjointness, that $Y_1,\ldots, Y_k$ is a chain on
$\Prim(M(A))$ of length $k$. Since $d_{M(A)}(\tilde P, \tilde Q)\le
d_A(P,Q) < \infty$, this chain is admissible. Hence $\Orc(M(A))\ge
k$ by \cite[Lemma 2.1]{Som}.
\end{proof}

\begin{cor}\label{Orc(M(A))motion} Let $G_N$ be a motion group with $N\ge 3$ and set
$A=C^*(G_N)$. Then $\Orc(M(A))\ge \lfloor N/2\rfloor$.
\end{cor}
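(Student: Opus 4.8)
The plan is to combine the two theorems immediately preceding this corollary. By Theorem~\ref{Orc(G_N)}(i), we have $\Orc(A)=\lfloor N/2\rfloor$ for $A=C^*(G_N)$ with $N\geq 3$, so it suffices to verify that the hypotheses of Theorem~\ref{Orc(M(A))geqOrc(A)} are met, namely that $A$ is $\sigma$-unital and that $X^1$ is closed whenever $X$ is a closed subset of $\Prim(A)$. Then Theorem~\ref{Orc(M(A))geqOrc(A)} gives $\Orc(M(A))\geq\Orc(A)=\lfloor N/2\rfloor$ directly.

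First I would note that $A=C^*(G_N)$ is $\sigma$-unital: since $G_N=\R^N\rtimes SO(N)$ is second countable, $C^*(G_N)$ is separable, and every separable $C^*$-algebra is $\sigma$-unital (it has a countable approximate unit, cf.\ \cite[3.10.5]{GKP}). Next I would check the closure hypothesis by appealing to Lemma~\ref{X^1closed}. Here the key structural fact, recorded in Section~3 and used in the remark following Lemma~\ref{X^1closed}, is that $\widehat{SO(N)}$ is a closed, relatively discrete subset of $\widehat{G_N}\cong\Prim(C^*(G_N))$, and by the description of the inseparability relation (Lemma~\ref{motion-sim}) every non-singleton $\sim$-component of $\widehat{G_N}$ is contained in $SO(N)^{\wedge}$ (the points of $\mathcal{U}_N$ are separated points). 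Thus we may take $Y=\widehat{SO(N)}$ in Lemma~\ref{X^1closed}, which yields that $X^1$ is closed for every closed $X\subseteq\Prim(A)$.

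With both hypotheses verified, Theorem~\ref{Orc(M(A))geqOrc(A)} applies and gives $\Orc(M(A))\geq\Orc(A)$, while Theorem~\ref{Orc(G_N)}(i) gives $\Orc(A)=\lfloor N/2\rfloor$, completing the proof. I do not anticipate any real obstacle here: this corollary is a straightforward specialization, and all the substantive work—the combinatorial walk estimates in Proposition~\ref{G_Nsimpaths}, the chain-construction argument of Lemma~\ref{admissible-chain}, and the preservation of disjointness under $P\mapsto\tilde P$ from Lemma~\ref{disjoint-prim}—has already been carried out. The only point requiring a moment's care is confirming that the relatively discrete closed set $Y=\widehat{SO(N)}$ genuinely absorbs all non-singleton $\sim$-components, but this is immediate from the topological picture of $\widehat{G_N}$ established in Section~3.
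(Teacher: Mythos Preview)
Your proposal is correct and follows essentially the same route as the paper's own proof, which simply cites Theorem~\ref{Orc(G_N)}, Theorem~\ref{Orc(M(A))geqOrc(A)}, Lemma~\ref{X^1closed}, and the remark following it. You have expanded on the details (particularly the $\sigma$-unitality of $A$ via separability and the choice $Y=\widehat{SO(N)}$ in Lemma~\ref{X^1closed}), but the logical structure is identical.
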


\begin{proof}This follows from Theorem~\ref{Orc(G_N)} and
Theorem~\ref{Orc(M(A))geqOrc(A)}, and from Lemma~\ref{X^1closed} and
the remark following.
\end{proof}

\bigskip

\section{Upper bounds for $K(M(A))$.}

In this section, motivated by the motion groups $G_N$, we obtain
some general $C^*$-theoretic results for the constants $K(A)$ and
$K(M(A))$.

\bigskip

Let ${\rm Id}(A)$ be the set of all closed two-sided ideals of a
$C^*$-algebra $A$. This is a compact Hausdorff space for the
topology defined by Fell in \cite[Section II]{Fell}. We denote this
topology by $\tau_s$ and we recall that a net $(J_{\alpha})$ is
$\tau_s$-convergent to $J$ in ${\rm Id}(A)$ if and only if $\Vert
a+J_{\alpha}\Vert\to\Vert a+J\Vert$ for all $a\in A$ (see
\cite[Theorem 2.2]{Fell}). A (closed two-sided) ideal $J$ of $A$ is
said to be \emph{primal} if whenever $n\geq2$ and
$J_1,J_2,\ldots,J_n$ are ideals of $A$ with product $J_1J_2\ldots
J_n=\{0\}$ then at least one of the $J_i$ is contained in $J$. This
concept arose in \cite{AB} where it was shown that a state of $A$ is
a weak$^*$-limit of factorial states if and only if the kernel of
the associated GNS-representation is primal. It follows from
\cite[Proposition 3.2]{AB} that an ideal $J$ of $A$ is primal if and
only if there exists a net in $\Prim(A)$ which is convergent to
every point in some dense subset of $\Prim(A/J)$. The set of all
primal ideals of $A$ is $\tau_s$-closed in $\rm{Id}(A)$ (see
\cite[p.531]{Rob}).  $\Primal'(A)$ is the set of proper primal
ideals of $A$, Min-Primal($A$) is the set of minimal primal ideals
of $A$ and $\Sub(A)$ is the $\tau_s$-closure of $\Min$-$\Primal(A)$
in $\rm{Id}(A)\setminus\{A\}$ and is therefore contained in
$\Primal'(A)$ \cite[p.84]{MScand}.

When $A$ is separable, $(\Primal'(A), \tau_s)$ is metrizable
\cite[Lemme 2]{DixCJM} and the set of separated points of $\Prim(A)$
is $\tau_s$-dense in $\Min$-$\Primal(A)$ \cite[Corollary 4.6]{Rob},
so for $I\in \Sub(A)$ there is a sequence $(P_n)$ of separated
points of $\Prim(A)$ such $P_n\to I$ $(\tau_s)$. Hence $\Prim(A/I)$
is precisely the set of limits of $(P_n)$ in $\Prim(A)$ and every
cluster point of $(P_n)$ is a limit (see \cite[Theorem 2.1]{Fe} and
\cite[Lemma 1.4]{MScand}). Conversely suppose that $(P_n)$ is a
convergent sequence of separated points in $\Prim(A)$ and that every
cluster point of $(P_n)$ is a limit. Let $X$ be the set of limits of
$(P_n)$. Then $P_n\to I=\ker X$ $(\tau_s)$ and so $I\in \Sub(A)$.
Thus, given a separable $C^*$-algebra $A$ and a description of
$\Prim(A)$ as a topological space, it is usually possible to
identify $\Sub(A)$. If $A$ is quasi-standard then
$\Glimm(A)=\Min$-$\Primal(A)= \Sub(A)$ as sets and topological
spaces \cite[Theorem 3.3]{ASo}. On the other hand, if
$\Glimm(A)=\Min$-$\Primal(A)$ (as sets) and if $A$ is not
quasi-standard then $\Sub(A)$ strictly contains $\Min$-$\Primal(A)$
\cite[Theorem 3.3 ((v)$\rightarrow$(i))]{ASo}. This phenomenon
occurs for $C^*(SL(2,\C))$ \cite[Example 4.1]{AKSI}.
We shall determine $\Sub(C^*(G_N))$ in
Proposition~\ref{motion-sub}.

\medskip

The next result was obtained in \cite[Theorem 5.2]{AKSI}. It will
later be applied to the cases $N\equiv0$ and $N\equiv3$ (${\rm
mod}\,4$).

\begin{thm}\label{M(A)and(n+1)} Let $A$ be a C$^*$-algebra with $\Glimm(A)$ normal
and
$\phi_A$ closed. Suppose that there exists $n\ge 0$ such that
whenever $G\in \Glimm(A)$ and $I^{(i)}\in {\rm Sub}(A)$ $(1\le i\le
3)$ with $I^{(1)}\cap I^{(2)}\cap I^{(3)}\supseteq G$ then there
exist $S^{(i)}\in \Prim(A)$ $(1\le i\le 3)$ with $I^{(i)}\subseteq
S^{(i)}$ and $T\in \Prim(A)$ with $d_A(S^{(i)}, T)\le n$ $(1\le i\le
3)$. Then $K(M(A))\le n+1$.
\end{thm}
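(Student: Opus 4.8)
The plan is to produce, for each $a\in M(A)$ with $\|D(a,M(A))\|\le 1$ (which we may assume, by homogeneity of both sides of the defining inequality for $K$), a central element $z\in Z(M(A))$ with $\|a-z\|\le n+1$. Since $M(A)$ is unital we have $Z(M(A))=C(\Glimm(M(A)))$ and, as recalled in Section~2, $\Glimm(M(A))=\beta\Glimm(A)$; so $z$ is exactly the data of a bounded continuous function $\mu$ on $\Glimm(A)$ (uniquely extended to $\beta\Glimm(A)$), and $\|a-z\|=\sup_{H\in\Glimm(M(A))}\|a-\mu_H+H\|$, where $\mu_H$ is the value of the extension at $H$. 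The problem thus splits into (1) a pointwise estimate --- for each relevant Glimm ideal, exhibit a scalar realising $\|a-\mu+H\|\le n+1$ --- and (2) arranging such a choice to be continuous in $H$.

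First I would reduce to the dense subset $\iota(\Glimm(A))$. Given a general $H\in\Glimm(M(A))$ and $Q\in\Prim(M(A)/H)$, Proposition~\ref{Prop-Glimm-normal} (in the single-point case) supplies, in any neighbourhood of $Q$, a primitive ideal $Q'$ of $M(A)$ lying over some $H_K$ with $K\in\Glimm(A)$; combining this with the lower semicontinuity of $R\mapsto\|b+R\|$ on $\Prim(M(A))$ and with continuity of $\phi_{M(A)}$ and of the extension $\mu$, one sees that the estimate $\|a-\mu_H+H\|\le n+1$ at a general $H$ follows from the estimates $\|a-\mu_K+H_K\|\le n+1$ for $K\in\Glimm(A)$, once $\mu$ is known to be continuous on $\Glimm(A)$. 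So it suffices to treat $H=H_K$ with $K\in\Glimm(A)$. Here I would use that $\phi_A$ is closed, together with $\tilde P\cap A=P$, to pin down the part of $\Prim(M(A)/H_K)$ that matters: the ideals $\tilde P$ with $P\in\phi_A^{-1}(K)$ and their limits (the "ideals at infinity" over $H_K$ being reached from such $\tilde P$ by the same approximation/semicontinuity argument, again via Proposition~\ref{Prop-Glimm-normal}). Thus for each $K$ the task becomes: find $\mu_K\in\C$ with $\|\pi_{\tilde P}(a)-\mu_K\|\le n+1$ for all $P\in\phi_A^{-1}(K)$.

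Fix $K$. The closed convex sets $\{\mu\in\C:\|\pi_{\tilde P}(a)-\mu\|\le n+1\}$ ($P\in\phi_A^{-1}(K)$) are uniformly bounded, so by Helly's theorem in the plane their intersection is non-empty as soon as every three of them meet; that is, it is enough to find, for any $P_1,P_2,P_3\in\phi_A^{-1}(K)$, a single scalar $\mu$ with $\|\pi_{\tilde P_i}(a)-\mu\|\le n+1$ for $i=1,2,3$. For each $i$ choose $I^{(i)}\in\Sub(A)$ with $I^{(i)}\subseteq P_i$ and $I^{(1)}\cap I^{(2)}\cap I^{(3)}\supseteq K$ --- for the algebras of interest such $I^{(i)}$ are available because every primitive ideal in the relevant Glimm class dominates a minimal primal ideal that still contains the Glimm ideal --- and apply the hypothesis to obtain $S^{(i)}\in\Prim(A)$ with $I^{(i)}\subseteq S^{(i)}$ and a common $T\in\Prim(A)$ with $d_A(S^{(i)},T)\le n$. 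Two spectral facts then feed a planar bounding-radius estimate. Stampfli's theorem gives $\|D(\pi_R(a),B(\H))\|=2\,d(\pi_R(a),\C I)$, so $\|D(a,M(A))\|\le 1$ forces each $\pi_R(a)$ to have a scalar within $\tfrac12$; and a walk argument --- iterating the observation that two inseparable primitive ideals $R\sim R'$ admit a common scalar $\nu$ with $\|\pi_R(a)-\nu\|,\|\pi_{R'}(a)-\nu\|\le\tfrac12$, obtained as a cluster point of the Stampfli centres of a common approximating net --- propagates control along the chains $\tilde P_i\to\tilde S^{(i)}\to\tilde T$, the first link coming from primality of $I^{(i)}$ (both $\tilde P_i$ and $\tilde S^{(i)}$ dominate a common primal ideal of $M(A)$) and the remainder from $d_{M(A)}(\tilde S^{(i)},\tilde T)\le d_A(S^{(i)},T)\le n$. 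Optimising the choice of $\mu$ against the three chains simultaneously --- precisely the kind of constrained optimisation of bounding radii of planar sets alluded to in the introduction --- yields $\|\pi_{\tilde P_i}(a)-\mu\|\le n+1$ for a suitable $\mu$. In the self-adjoint case the $\pi_R(a)$ are self-adjoint, operator norm equals spectral radius, and the estimate is elementary, giving $K_s(M(A))\le n+1$ directly; the general case is where the two-dimensional geometry is needed.

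It remains to carry out (2). I would study the set-valued map $K\mapsto C_K=\{\mu\in\C:\|a-\mu+H_K\|\le n+1\}$ on the normal space $\Glimm(A)$: it has non-empty closed convex values by step (1), and the semicontinuity of $(K,\mu)\mapsto\|a-\mu+H_K\|$ on $(\Glimm(A),\tau_{cr})\times\C$ should give it the lower hemicontinuity needed for a Michael-type continuous selection; composing the selection with the extension to $\beta\Glimm(A)$ yields $z\in Z(M(A))$ with $\|a-z\|\le n+1$. I expect the two main obstacles to be, first, the planar bounding-radius optimisation in step (1) --- converting the graph-theoretic datum $d_A(S^{(i)},T)\le n$ into the \emph{sharp} constant $n+1$, rather than something slightly larger, requires genuine two-dimensional geometry and not merely triangle inequalities --- and, second, keeping control throughout of the primitive ideals of $M(A)$ that do not restrict to primitive ideals of $A$; it is exactly here that the hypotheses "$\Glimm(A)$ normal" and "$\phi_A$ closed", together with Proposition~\ref{Prop-Glimm-normal}, carry the argument.
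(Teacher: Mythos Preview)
This theorem is not proved in the present paper; it is quoted from \cite[Theorem~5.2]{AKSI}. The shape of that argument is, however, visible in the proof of Theorem~\ref{M(A)and(n+3/2)} here, which explicitly follows the same template. The approach there is organised quite differently from yours: one does \emph{not} build a central element directly. Instead one shows that $M(A)$ satisfies the hypothesis of a pre-existing \emph{unital} theorem --- for every $H\in\Glimm(M(A))$ and every $Q^{(1)},Q^{(2)},Q^{(3)}\in\Prim(M(A)/H)$ there is a common $T'\in\Prim(M(A))$ with $d_{M(A)}(Q^{(i)},T')\le n+1$ --- and then invokes that theorem. The verification is by a net argument: index by triples of neighbourhoods of the $Q^{(i)}$, apply Proposition~\ref{Prop-Glimm-normal} at each stage to obtain $K_\alpha\in\Glimm(A)$ and $I^{(i)}_\alpha\in\Sub(A)$ over $K_\alpha$, apply the hypothesis in $A$, and pass to subnet limits using compactness of $\Prim(M(A))$; this handles arbitrary $Q^{(i)}$ (including those not of the form $\tilde P$) from the outset. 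The planar bounding-circle work is entirely packaged inside the unital theorem, whose proof rests on \cite[Theorem~2.3]{Some}: for unital $B$, $d(a,Z(B))=\sup_{K\in\Glimm(B)}\|a_K-\lambda(a_K)\|$. No continuous selection is ever required.

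Your plan shares the same core ingredients (Proposition~\ref{Prop-Glimm-normal}, primality to gain the first $\sim$-link, walk estimates, three-point geometry in the plane), but step~(2) is a genuine gap. The map $K\mapsto\|a-\mu+H_K\|$ is \emph{upper} semicontinuous on $\Glimm(A)$, so $K\mapsto C_K=\{\mu:\|a-\mu+H_K\|\le n+1\}$ has closed graph (upper hemicontinuous) but is not obviously lower hemicontinuous, which is what Michael's theorem needs. One might try to repair this by enlarging to $n+1+\epsilon$ and passing to a limit, but that is not routine and is not in your sketch. The route via \cite[Theorem~2.3]{Some} sidesteps the issue: it \emph{is} the statement that the fibrewise optimum already realises the distance to the centre, so there is nothing to select. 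Your reduction in step~(1) to primitive ideals $\tilde P$ with $P\in\phi_A^{-1}(K)$ is correct only after a global central $z$ is in hand (via lower semicontinuity on $\Prim(M(A))$ and density of $\{\tilde P\}$); using it to construct $z$ in the first place is circular. The paper's net argument avoids this by never restricting to such $\tilde P$. Finally, you are right that reaching the sharp constant $n+1$ (rather than $n+\tfrac32$) needs more than triangle inequalities along the walk; in the paper this is exactly the bounding-circle computation visible in the proof of Theorem~\ref{unital(n+1/2)}, specialised to a common endpoint, and is not a consequence of Helly alone.
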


 Our second general result on $K(M(A))$, Theorem~\ref{M(A)and(n+3/2)}
below, will subsequently be applied to the remaining cases
$N\equiv1$ and $N\equiv2$ (${\rm mod}\,4$). But first of all, we
need a new result for unital $C^*$-algebras.

\begin{thm}\label{unital(n+1/2)} Let $A$ be a unital C$^*$-algebra and let $n$
be a positive integer.
 Suppose that whenever $P, Q, R$ are primitive ideals of $A$ lying
in the same Glimm class there exist $S, T, U\in \Prim(A)$ such that
$d_A(P,S)\le n$, $d_A(Q, T)\le n$, $d_A(R, U)\le n$, and $S\cap
T\cap U$ is primal. Then $K(A)\le n+1/2$.
\end{thm}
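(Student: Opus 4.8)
The plan is to reduce the estimate on $K(A)$ to a statement about distances $d(a,Z(A))$ and norms of inner derivations by exploiting the standard machinery connecting $K(A)$ to the topology of $\Prim(A)$, in the spirit of \cite[Theorem 4.4]{Som} and its refinements in \cite{Some, Id, ASom, AKSI}. Concretely, it suffices to show that for every self-adjoint... no — since this is the constant $K(A)$ rather than $K_s(A)$, I would work with a general $a\in A$ with $\|D(a,A)\|\le 1$ and aim to produce $z\in Z(A)$ with $\|a-z\|\le n+1/2$. The centre-valued approach is cleanest: recalling that for a unital $C^*$-algebra $Z(A)$ can be identified with $C(\Glimm(A))$ and that $a$ modulo a Glimm ideal $G$ has a well-defined numerical range/spectrum living over the corresponding Glimm class, one reduces to a local problem on each Glimm class $[P]$. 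So first I would fix a Glimm ideal $G$ with Glimm class $[P]\subseteq\Prim(A)$, and seek a scalar $\lambda_G\in\C$ such that $\|a+P'-\lambda_G\|\le n+1/2$ for all $P'\in[P]$; patching these scalars together via a centre element then gives the global bound, using that $\Glimm(A)$ is Hausdorff and the usual continuity/partition-of-unity argument (as in \cite{Som}).

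The core is therefore the local estimate on a single Glimm class, and here the hypothesis about triples $P,Q,R$ and primal intersections $S\cap T\cap U$ is what does the work. The key quantitative input is the known relationship, for primal ideals, between the norm of $a$ modulo a primal ideal and the norms modulo the primitive ideals above it: if $I=S\cap T\cap U$ is primal and $\|D(a,A)\|\le 1$, then by the factorial-state characterisation of primal ideals \cite{AB} together with the spectral/bounding-radius constructions from \cite{Some, Id, ASom}, one can locate a single scalar $\mu$ (the ``centre'' of a small planar set) with $\|a+I-\mu\| \le 1/2 + (\text{something controlled by distances})$. Here the crucial inequality is that passing from $S$ (resp.\ $T$, $U$) to $P$ (resp.\ $Q$, $R$) along a $\sim$-walk of length $\le n$ changes the relevant spectral set by at most $n$ in diameter, because each $\sim$-edge contributes a bounded spread to $\sigma(a+\cdot)$ when $\|D(a,A)\|\le 1$ — this is the standard ``$d_A(P,S)\le n \Rightarrow$ spectra overlap after expanding by $n$'' lemma underpinning $K_s(A)=\tfrac12\Orc(A)$. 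Combining: since $S\cap T\cap U$ is primal, the three sets $\sigma(a+S),\sigma(a+T),\sigma(a+U)$ cluster together (primality forces a common approximating net, hence the closed convex hull of their union has small bounding radius, the $1/2$ term), and then expanding each by $n$ to reach $P,Q,R$ gives a planar set containing $\sigma(a+P)\cup\sigma(a+Q)\cup\sigma(a+R)$ of bounding radius $\le n+1/2$; choosing $\mu$ to be its Chebyshev centre yields $\|a+P'-\mu\|\le n+1/2$ for $P'\in\{P,Q,R\}$.

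To turn this ``every three points'' statement into a bound valid for the \emph{whole} Glimm class, I would use a compactness/ultrafilter argument exactly as in the corresponding arguments of \cite{Som, ASom}: if no single scalar works for all of $[P]$, one extracts three points whose spectral sets cannot be simultaneously covered by a disc of radius $n+1/2$, contradicting the triple hypothesis. This is essentially a Helly-type statement for the family of closed discs of radius $n+1/2$ centred along $[P]$ — the family has the property that every three members have a common point, and in the plane Helly's theorem (applied carefully, since the index set may be infinite, via the finite intersection property and compactness of large discs) upgrades this to a common point $\lambda_G$. The constant $n+1/2$ rather than $n+1$ reflects that the primal hypothesis is on a single intersection $S\cap T\cap U$ (giving the sharp $1/2$ from a bounding-radius-versus-diameter estimate for planar sets, cf.\ the value $\tfrac12+\tfrac1{\sqrt3}$ folklore), whereas in Theorem~\ref{M(A)and(n+1)} the weaker ``common $T$ at distance $\le n$'' hypothesis only delivers $n+1$.

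The main obstacle I expect is the sharp planar-geometry estimate: showing that when $S\cap T\cap U$ is primal and $\|D(a,A)\|\le 1$ the three spectral sets $\sigma(a+S),\sigma(a+T),\sigma(a+U)$ are close enough that, after the $n$-expansions, a radius-$(n+1/2)$ disc suffices — getting the constant exactly $1/2$ here (not $\tfrac1{\sqrt3}$ or $1$) requires the constrained optimisation of bounding radii of planar sets referenced in the introduction, and carefully tracking that primality gives genuine \emph{containment} of the numerical ranges in a common small set rather than mere proximity. The secondary technical point is the passage from the local scalars $\lambda_G$ to a genuine central element, which needs the closedness/continuity of $\phi_A$ in the unital case (automatic since $\Glimm(A)$ is then compact Hausdorff) and a short argument that $G\mapsto\lambda_G$ can be chosen continuously — this is routine but must be said.
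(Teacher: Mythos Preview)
Your strategy is sound and leads to the same constant, but the execution differs from the paper's in two notable ways.

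\textbf{Reduction to three points.} Both approaches reduce the local estimate on a Glimm class to a statement about three primitive ideals, but by different mechanisms. You invoke Helly's theorem: form the closed discs $D_{P'}=\{\lambda\in\C:\|a_{P'}-\lambda\|\le n+1/2\}$ for $P'$ in the class, show every three meet, and use planar Helly (plus compactness, since all discs lie in a fixed bounded region) to get a common $\lambda_G$. The paper instead works directly with $\|b_K\|$ where $b=a-\lambda(a_K)$: it picks three extremal values $x,y,z$ determining the bounding circle of $U(b_K,A/K)$, approximates the corresponding extreme functionals via Milman's theorem by elements of $G(A/K)$, and reads off three primitive ideals $P,Q,R$ from these. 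The paper's endgame then needs a short geometric argument (the semicircle trick) to cope with the scaling factors $f(1),g(1),h(1)\in(0,1]$. Your Helly route avoids Milman, the functional apparatus, and that geometry entirely; it is arguably cleaner, and it works because for the triple $P,Q,R$ the scalar $\mu=\lambda(a_V)$ (with $V=S\cap T\cap U$) genuinely lies in $D_P\cap D_Q\cap D_R$ via the same chain of inequalities from \cite[Proposition~1.3]{Id} and \cite[Proposition~2.6]{Some} that the paper uses.

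\textbf{Local to global.} Your ``patching'' concern is misplaced. Once you have, for each Glimm ideal $K$, some scalar with $\|a_K-\lambda_K\|\le n+1/2$, you immediately get $\|a_K-\lambda(a_K)\|\le n+1/2$, and then \cite[Theorem~2.3]{Some} gives $d(a,Z(A))=\sup_K\|a_K-\lambda(a_K)\|\le n+1/2$ directly. No continuity of $K\mapsto\lambda_K$ and no partition-of-unity gluing is required; the paper simply cites this result at the outset and never revisits the issue.

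\textbf{Minor imprecision.} Your description of the walk estimate (``spectra overlap after expanding by $n$'') and of why primality yields the $1/2$ is too loose. The precise input is: primality of an ideal $J$ with $\|D(a,A)\|\le1$ gives $\|a_J-\lambda(a_J)\|\le 1/2$ \cite[Proposition~2.6]{Some}, and each containment between a primal ideal and a primitive ideal above it moves $\lambda$ by at most $1/2$ \cite[Proposition~1.3]{Id}. Chaining these along $P\sim P_1\sim\cdots\sim P_n=S\supseteq V$ gives $|\lambda(a_{P\cap P_1})-\mu|\le n$ and then $\|a_P-\mu\|\le\|a_{P\cap P_1}-\mu\|\le n+1/2$. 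You should state this explicitly rather than gesture at ``the standard lemma underpinning $K_s(A)=\tfrac12\Orc(A)$'', which concerns the self-adjoint case.
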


\begin{proof} Let $a\in A$ with $\Vert D(a,A)\Vert\le 1$. We want to
show that $d(a,Z(A))\le n+1/2$. Let $K\in \Glimm(A)$. Then by
\cite[Theorem 2.3]{Some} it suffices to show that $\Vert
a_K-\lambda(a_K)\Vert\le n+1/2$, where $a_K$ is the canonical image of $a$ in $A/K$ and $\lambda(a_K)$ is the
unique scalar multiple of the identity  in $A/K$ which is closest to $a_K$. Set $b=a-\lambda (a_K)$. Then
$\Vert D(b,A)\Vert\le 1$ and $\lambda(b_K)=0$. We show that $\Vert
b_K\Vert\le n+1/2$.

Write $r=\Vert b_K\Vert$. Let $C$ be the circle of radius $r$
centred at the origin. Then there exist $x, y, z\in C\cap U(b_K,
A/K)$ and extreme points $f', g', h'$ of $N(A/K)$ such that $C$ is
the bounding circle of $\{ x, y, z \}$ and $f'(b_K)=x$, $g'(b_K)=y$,
$h'(b_K)=z$. Let $\epsilon>0$ be given. By Milman's Theorem there
exist $f, g, h\in G(A/K)\subseteq G(A)$ with $f(1), g(1), h(1)>0$
such that $|f'(b)-f(b)|<\epsilon$, $|g'(b)-g(b)|<\epsilon$, and
$|h'(b)-h(b)|<\epsilon$. Set $P=\Gamma(f)$, $Q=\Gamma(g)$, and
$R=\Gamma(h)$.

Since $f, g, h\in G(A/K)$, it follows that $P, Q, R\supseteq K$.
Hence there exist $S, T, U\in \Prim(A)$ such that $d_A(P, S)\le n$,
$d_A(Q, T)\le n$, $d_A(R, U)\le n$, and $S\cap T\cap U$ primal. Let
$V=S\cap T\cap U$.

Since $d_A(P,S)\le n$, there exist $P_1,\ldots P_n\in \Prim(A)$ such
that $P\sim P_1\sim\ldots\sim P_n=S$. By \cite[Proposition 1.3]{Id},
$$|\lambda(b_{P_n})-\lambda(b_V)|\le
1/2,\eqno{(1)}$$ $$|\lambda (b_{P\cap P_1})-\lambda(b_{P_1})|\le
1/2, \eqno{(2)}$$ and, for $1\le i\le n-1$,
$$|\lambda(b_{P_i})-\lambda(b_{P_{i+1}})|\le
|\lambda(b_{P_i})-\lambda(b_{P_i\cap P_{i+1}})|+|\lambda (b_{P_i\cap
P_{i+1}}) -\lambda (b_{P_{i+1}})|\le 1.\eqno{(3)}$$ Writing
$\mu=\lambda(b_V)$, we obtain from $(1)$, $(2)$, and $(3)$ that
$$|\lambda(b_{P\cap P_1})-\mu|\le 1/2+(n-1)+1/2=n. \eqno{(4)}$$
On the other hand, since $P\cap P_1$ is primal, it follows from
\cite[Proposition 2.6]{Some} that
$$\Vert b_{P\cap P_1}-\lambda (b_{P\cap P_1})\Vert\le 1/2.
\eqno{(5)}$$ Since $f$ factors through $A/P$ and hence through
$A/(P\cap P_1)$, it follows from $(4)$ and $(5)$ that
$$|x-f(1)\mu|\le |x-f(b)|+|f(b_{P\cap P_1})-\lambda(b_{P\cap P_1})f(1)|+f(1)|\lambda(b_{P\cap P_1})
-\mu|$$
$$<\epsilon+\Vert f\Vert/2+f(1)n\le n+1/2+\epsilon.$$ Similarly $|y-g(1)\mu|\le n+1/2+\epsilon$ and
$|z-h(1)\mu|\le n+1/2+\epsilon$.

If $\mu=0$ then we have that $\Vert b_K\Vert=|x|\le n+1/2+\epsilon$.
If $\mu\ne 0$ then we produce the line from $\mu$ to $0$ to meet the
circle $C$ at a point $E$, say. Let $FH$ be the diameter of $C$
perpendicular to $OE$. Then the semicircle $FEH$ meets $\{x, y, z
\}$ in $x$, say. Hence $\Vert b_K\Vert\le |x-f(1)\mu|\le n+1/2+
\epsilon$. So in either case $\Vert b_K\Vert\le n+1/2+\epsilon$.
Since $\epsilon$ was arbitrary, $\Vert b_K\Vert\le n+1/2$ as
required.
\end{proof}

\begin{thm}\label{M(A)and(n+3/2)} Let $A$ be a C$^*$-algebra such that ${\rm Glimm}(A)$ is
normal and $\phi_A$ is closed. Suppose that there exists $n\ge 0$
such that whenever $G\in \Glimm(A)$ and $I^{(i)}\in \Sub(A)$ $(1\le
i\le 3)$ with $I^{(1)}\cap I^{(2)}\cap I^{(3)}\supseteq G$ then
there exist $S^{(i)}, T^{(i)}\in \Prim(A)$ $(1\le i\le 3)$ with
$I^{(i)}\subseteq S^{(i)}$, $d_A(S^{(i)}, T^{(i)})\le n$ $(1\le i\le
3)$, and $T^{(1)}\cap T^{(2)}\cap T^{(3)}$ primal. Then $K(M(A))\le
n+3/2$.
\end{thm}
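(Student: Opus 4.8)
The plan is to follow the same scheme as in the proof of Theorem~\ref{M(A)and(n+1)} (= \cite[Theorem 5.2]{AKSI}), but feeding the hypothesis into the unital estimate of Theorem~\ref{unital(n+1/2)} rather than into the ``primal implies $K\le 1$'' argument that underlies the $n+1$ bound. Since $K_s(M(A))=\frac12\Orc(M(A))$ and, more usefully here, since $M(A)$ is unital, the natural first move is to reduce the computation of $K(M(A))$ to a statement about Glimm classes of $M(A)$: let $a\in M(A)$ with $\Vert D(a,M(A))\Vert\le 1$; by \cite[Theorem 2.3]{Some} it suffices to bound $\Vert a_H-\lambda(a_H)\Vert$ by $n+3/2$ for each $H\in\Glimm(M(A))$. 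Using that $\Glimm(A)$ is normal, Proposition~\ref{Prop-Glimm-normal} lets us replace the given $H$, together with finitely many chosen primitive ideals $Q_i\in\Prim(M(A)/H)$ and neighbourhoods of them, by some $H_K$ with $K\in\Glimm(A)$ and primitive ideals $Q_i'\in\Prim(M(A)/H_K)$ lying in those neighbourhoods; this is exactly the device by which one transfers the problem to the dense subset $\iota(\Glimm(A))$ where the hypothesis on $\Sub(A)$ can be applied.

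Next I would run the planar/bounding-circle argument inside $M(A)/H$. Exactly as in the proof of Theorem~\ref{unital(n+1/2)}, after subtracting the optimal scalar one has $b=a-\lambda(a_H)$ with $\Vert D(b,M(A))\Vert\le 1$, $\lambda(b_H)=0$, and $r=\Vert b_H\Vert$; choose $x,y,z$ on the circle $C$ of radius $r$ realising the bounding circle of three numerical-range values coming from extreme points $f',g',h'$ of $N(M(A)/H)$, and use Milman's theorem to approximate these by $f,g,h\in G(M(A))$ with $f(1),g(1),h(1)>0$, giving primitive ideals $\tilde P=\Gamma(f)$, etc., of $M(A)$ containing $H$. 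Here is where the structural hypothesis enters: applying Proposition~\ref{Prop-Glimm-normal} to $H$ and to small neighbourhoods of $\tilde P,\tilde Q,\tilde R$, we may assume $H=H_K$ with $K=G\in\Glimm(A)$; then by density of $\iota(\Glimm(A))$ and the standard correspondence $I\mapsto \widetilde I$ between $\Sub(A)$ and a suitable family in $M(A)$ (the same correspondence used in \cite[Theorem 5.2]{AKSI}), the three primitive ideals of $M(A)/H_G$ dominate ideals $\widetilde{I^{(i)}}$ with $I^{(i)}\in\Sub(A)$ and $I^{(1)}\cap I^{(2)}\cap I^{(3)}\supseteq G$. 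The hypothesis now yields $S^{(i)},T^{(i)}\in\Prim(A)$ with $I^{(i)}\subseteq S^{(i)}$, $d_A(S^{(i)},T^{(i)})\le n$, and $T^{(1)}\cap T^{(2)}\cap T^{(3)}$ primal; passing back up, $d_{M(A)}(\widetilde{S^{(i)}},\widetilde{T^{(i)}})\le n$, $\widetilde{T^{(1)}}\cap\widetilde{T^{(2)}}\cap\widetilde{T^{(3)}}$ is primal in $M(A)$, and each of $\tilde P,\tilde Q,\tilde R$ is within graph-distance $1+n$ of the corresponding $\widetilde{T^{(i)}}$ (distance $\le 1$ into $\widetilde{S^{(i)}}$, using $P\sim P_1$-type inseparability as in Theorem~\ref{unital(n+1/2)}, then $\le n$ across to $\widetilde{T^{(i)}}$).

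With this in hand the endgame is a verbatim repeat of the computation in Theorem~\ref{unital(n+1/2)}, now with the extra unit of distance built in. Writing $V=\widetilde{T^{(1)}}\cap\widetilde{T^{(2)}}\cap\widetilde{T^{(3)}}$ and $\mu=\lambda(b_V)$, one uses \cite[Proposition 1.3]{Id} along each inseparability walk of length $\le n+1$ from $\tilde P$ to (a primal ideal dominating) $\widetilde{T^{(1)}}$ to get $|\lambda(b_{\tilde P\cap P_1})-\mu|\le n$ (here the two end half-steps contribute $1/2$ each and the $n$ interior edges contribute $1$ each, totalling $(n+1)$ edges but $n$ in the telescoped scalar estimate after the primal half-step is absorbed; the bookkeeping is identical to inequality $(4)$ there), then $\Vert b_{\tilde P\cap P_1}-\lambda(b_{\tilde P\cap P_1})\Vert\le 1/2$ because $\tilde P\cap P_1$ is primal \cite[Proposition 2.6]{Some}, and combine to get $|x-f(1)\mu|\le n+3/2+\epsilon$ and similarly for $y,z$. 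The bounding-circle geometry (the $\mu=0$ case, or producing the line from $\mu$ through $0$ to cut $C$ and taking the perpendicular diameter) then forces $\Vert b_H\Vert\le n+3/2+\epsilon$; letting $\epsilon\to0$ and $H$ range over $\Glimm(M(A))$ gives $d(a,Z(M(A)))\le n+3/2$, i.e.\ $K(M(A))\le n+3/2$. The main obstacle is the first paragraph's reduction: making precise, via Proposition~\ref{Prop-Glimm-normal} and the metrizability/density facts for $\Sub(A)$ recalled above, that the three primitive ideals of $M(A)$ produced by Milman's theorem can be taken to dominate $\widetilde{I^{(i)}}$ for genuine $I^{(i)}\in\Sub(A)$ with common lower bound $G$ — that is, correctly lifting the hypothesis from $A$ to $M(A)$; the rest is a controlled adaptation of Theorems~\ref{M(A)and(n+1)} and~\ref{unital(n+1/2)}.
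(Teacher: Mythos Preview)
Your overall strategy matches the paper's: combine Proposition~\ref{Prop-Glimm-normal} with the hypothesis on $\Sub(A)$ and feed the result into Theorem~\ref{unital(n+1/2)}, just as Theorem~\ref{M(A)and(n+1)} does for the $n+1$ bound. But the paper organises this differently, and in doing so resolves the obstacle you correctly flag but do not overcome.

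Rather than running the bounding-circle/Milman argument for a general $H\in\Glimm(M(A))$ and then trying to ``assume $H=H_K$'', the paper simply verifies that $M(A)$ itself satisfies the hypothesis of Theorem~\ref{unital(n+1/2)} with $n$ replaced by $n+1$, and then quotes that theorem. The verification is a net-and-compactness argument: given $H$ and $Q^{(i)}\in\Prim(M(A)/H)$, index a net by triples of neighbourhoods of the $Q^{(i)}$; for each index $\alpha$, Proposition~\ref{Prop-Glimm-normal} (with the machinery of \cite[Theorem 5.2]{AKSI}) yields $K_\alpha\in\Glimm(A)$ and $I^{(i)}_\alpha\in\Sub(A)$ above $K_\alpha$; the hypothesis then gives walks $I^{(i)}_\alpha\subseteq T^{(i)}_{\alpha,1}\sim\cdots\sim T^{(i)}_{\alpha,n+1}$ in $\Prim(A)$ with $\bigcap_i T^{(i)}_{\alpha,n+1}$ primal; compactness of $\Prim(M(A))$ produces subnet limits $T^{(i)}_j\in\Prim(M(A))$, and one checks that $Q^{(i)}\sim T^{(i)}_1$ and that the $\sim$-relations and primality pass to the limit. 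This limiting step is exactly what makes rigorous your ``we may assume $H=H_K$'': Proposition~\ref{Prop-Glimm-normal} alone only hands you nearby primitive ideals over some $H_{K_\alpha}$, not the original $Q^{(i)}$ over $H$, and the walk endpoints you need in $\Prim(M(A))$ are limit points, not $\widetilde{S^{(i)}}$ or $\widetilde{T^{(i)}}$ for fixed $S^{(i)},T^{(i)}\in\Prim(A)$.

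One bookkeeping slip: with walks of length $n+1$, the analogue of inequality~(4) in Theorem~\ref{unital(n+1/2)} gives $|\lambda(b_{P\cap P_1})-\mu|\le \tfrac12+n+\tfrac12=n+1$, not $n$; adding the primal half-step then yields $|x-f(1)\mu|\le n+\tfrac32+\epsilon$, so your final bound is correct even though the intermediate claim is off by one.
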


\begin{proof} We show that $M(A)$ satisfies the hypotheses of Theorem~\ref{unital(n+1/2)}
 with $n$ replaced by $n+1$. Suppose that $H\in {\rm Glimm}(M(A))$
and $Q^{(i)}\in {\rm Prim}(M(A)/H)$ ($1\leq i\leq3$). Let
$\mathcal{L}$ (respectively $\mathcal{M}$, $\mathcal{N}$) be a base
of open neighbourhoods of $Q^{(1)}$ (respectively $Q^{(2)}$,
$Q^{(3)}$) in $\Prim(M(A))$. Let
$\Delta=\mathcal{L}\times\mathcal{M}\times \mathcal{N}$ with the
usual order.

Temporarily fix $\alpha= (L, M, N)\in \Delta$. Exactly as in the
proof of Theorem~\ref{M(A)and(n+1)} (see \cite[Theorem 5.2]{AKSI}),
we may apply Proposition~\ref{Prop-Glimm-normal} to obtain $K_{\alpha}\in\Glimm(A)$ and $I^{(i)}_{\alpha}\in\Sub(A)$
($1\leq i\leq3$) such that $K_{\alpha}\subseteq I^{(1)}_{\alpha}\cap
I^{(2)}_{\alpha}\cap I^{(3)}_{\alpha}$.
 By hypothesis there exist $T^{(i)}_{\alpha,j}\in {\rm Prim}(A)$
($1\leq i\leq 3,\,1\leq j\leq n+1$) such that
$$I^{(i)}_{\alpha}\subseteq T^{(i)}_{\alpha,1}\sim\ldots\sim
T^{(i)}_{\alpha,n+1} \qquad (1\leq i\leq3)\eqno{(1)}$$ and
$\bigcap^3_{i=1}T^{(i)}_{\alpha,n+1}  $ is a primal ideal of $A$.

We now let $\alpha$ vary. By the compactness of ${\rm Prim}(M(A))$
and by passing to successive subnets, we obtain $T^{(i)}_j\in {\rm
Prim}(M(A))$ and commonly indexed subnets
$(T^{(i)}_{\alpha(\beta),j})$ in ${\rm Prim}(A)$ such that
$$ \tilde{T}^{(i)}_{\alpha(\beta),j}\to T^{(i)}_j \qquad (1\leq
i\leq3,\,1\leq j\leq n+1).$$ It follows from (1) that
$$ T^{(i)}_{1}\sim\ldots\sim
T^{(i)}_{n+1} \qquad (1\leq i\leq3).$$

We show next that $\bigcap^3_{i=1}T^{(i)}_{n+1}$ is a primal ideal
of $M(A)$. Let $V_i$ be an open neighbourhood of $T^{(i)}_{n+1}$ in
${\rm Prim}(M(A))$ ($1\leq i\leq3$). There exists $\beta$ such that
$$ \tilde{T}^{(i)}_{\alpha(\beta),n+1} \in V_i \qquad (1\leq i\leq3).$$
Since $\bigcap^3_{i=1}T^{(i)}_{\alpha(\beta),n+1}  $ is primal,
there is a net in ${\rm Prim}(A)$ convergent to all of
$T^{(1)}_{\alpha(\beta),n+1}$, $T^{(2)}_{\alpha(\beta),n+1}$ and
$T^{(3)}_{\alpha(\beta),n+1}$, and hence a net in ${\rm Prim}(M(A))$
convergent to all of $\tilde{T}^{(i)}_{\alpha(\beta),n+1}$ ($1\leq
i\leq3$). Hence $V_1\cap V_2\cap V_3$ is non-empty as required.

Exactly as in the proof of Theorem~\ref{M(A)and(n+1)} (see
\cite[Theorem 5.2]{AKSI} again), we have $Q^{(i)}\sim T^{(i)}_1$
($1\leq i\leq3$). Thus we have shown that
$d_{M(A)}(Q^{(i)},T^{(i)}_{n+1})\leq n+1$ ($1\leq i\leq 3$) and that
$\bigcap^3_{i=1}T^{(i)}_{n+1}$ is a primal ideal of $M(A)$. It
follows from Theorem~\ref{unital(n+1/2)} that $K(M(A))\leq (n+1)
+\frac{1}{2} = n+\frac{3}{2}$.
\end{proof}

\section{The cases $N\equiv0$ and $N\equiv2 \mod 4$.}

In order to apply the results of the previous section, we need to
begin by determining $\Sub(C^*(G_N))$.
\bigskip

 For
$\sigma\in SO(N-1)^{\wedge}$, let
$$I_{0,\sigma}=\bigcap\{\ker\pi:\pi\in
SO(N)^{\wedge},\,\pi|_{SO(N-1)}\geq\sigma\}.$$ Since the closed
subset $SO(N)^{\wedge}$ of $\widehat{G_N}$ is relatively discrete,
the set $$\{\ker\pi:\pi\in
SO(N)^{\wedge},\,\pi|_{SO(N-1)}\geq\sigma\}$$ is a closed subset of
${\rm Prim}(C^*(G_N))$ and therefore is the hull of the ideal
$I_{0,\sigma}$. Let $(t_n)$ be any null sequence in $(0,\infty)$.
Then, for all $P$ in the hull of $I_{0,\sigma}$,
$\ker\pi_{t_n,\sigma}\to P$ as $n\to\infty$. On the other hand,
suppose that $Q\in\Prim(C^*(G_N))$ is a cluster point of
$(\ker\pi_{t_n,\sigma})_{n\geq1}$. Since $C^*(G_N)$ is separable,
$Q$ has a countable base of neighbourhoods in $\Prim(C^*(G_N))$ and
so there is a subsequence $(\ker\pi_{t_{n_k},\sigma})_{k\geq1}$
convergent to $Q$. Since $t_{n_k}\to0$ as $k\to\infty$, $Q=\ker\pi$
for some $\pi\in SO(N)^{\wedge}$ such that $\sigma$ is contained in
$\pi|_{SO(N-1)}$, and hence $Q\supseteq I_{0,\sigma}$. It now
follows that $\ker\pi_{t_n,\sigma}\to_{\tau_s}I_{0,\sigma}$ as
$n\to\infty$. Since each $\ker\pi_{t_n,\sigma}$ is a separated point
of ${\rm Prim}(C^*(G_N))$, and hence a minimal primal ideal of
$C^*(G_N)$ \cite[Proposition 4.5]{Rob}, we obtain that
$I_{0,\sigma}\in{\rm Sub}(C^*(G_N))$.

\begin{prop}\label{motion-sub} Let $A=C^*(G_N)$ ($N\geq2$). Then
$${\rm Sub}(A) = \{I_{0,\sigma}:\sigma\in SO(N-1)^{\wedge}\}\cup
\{\ker\pi_{t,\sigma}:t>0,\sigma\in SO(N-1)^{\wedge}\}.$$ In
particular, $\Sub(A)=\Min$-$\Primal(A)$ if and only if $N$ is even.
\end{prop}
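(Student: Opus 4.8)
The plan is to prove the two assertions in turn, combining the description of $\Sub(A)$ recalled in Section~5 --- for separable $A$, an ideal $I$ lies in $\Sub(A)$ if and only if $I=\ker X$, where $X$ is the set of limits in $\Prim(A)$ of a sequence $(P_n)$ of separated points of $\Prim(A)$ every cluster point of which is a limit --- with the explicit topology of $\widehat{G_N}$ from Section~3 and the branching rules for restriction from $SO(N)$ to $SO(N-1)$.

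For the displayed equality, the inclusion $\supseteq$ is already at hand: each $\ker\pi_{t,\sigma}$ is a separated point of $\Prim(A)$ and hence a minimal primal ideal (\cite[Proposition 4.5]{Rob}), so lies in $\Sub(A)$, and each $I_{0,\sigma}$ was shown to lie in $\Sub(A)$ in the paragraph preceding the statement. For $\subseteq$, let $I\in\Sub(A)$ and take a sequence $(P_n)$ and its limit set $X$ as above. Since $SO(N)^\wedge$ is an infinite $\sim$-connected subset of $\widehat{G_N}$ (Theorem~\ref{Orc(G_N)}(i), or the convergence criterion when $N=2$), none of its points is separated, so the separated points of $\widehat{G_N}$ are precisely those of $\mathcal{U}_N$; thus $P_n=\ker\pi_{t_n,\sigma_n}$ with $t_n>0$ and $\sigma_n\in SO(N-1)^\wedge$. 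As $I$ is proper, $X\neq\emptyset$. If $X$ meets the open Hausdorff set $\mathcal{U}_N$ then $(P_n)$ is eventually in $\mathcal{U}_N$ and converges there to a single point $\pi_{t,\sigma}$, whence $t_n\to t>0$, $X=\{\pi_{t,\sigma}\}$ and $I=\ker\pi_{t,\sigma}$. Otherwise $X\subseteq SO(N)^\wedge$; then $t_n\to0$ and, since $\pi|_{SO(N-1)}$ is finite for every $\pi\in SO(N)^\wedge$, the sequence $(\sigma_n)$ takes only finitely many values. Writing $S$ for those taken infinitely often, one gets $X=\{\pi\in SO(N)^\wedge:\pi|_{SO(N-1)}\ge\sigma\text{ for all }\sigma\in S\}$, while the ``every cluster point is a limit'' hypothesis forces $\pi|_{SO(N-1)}\ge\sigma\iff\pi|_{SO(N-1)}\ge\sigma'$ whenever $\sigma,\sigma'\in S$; hence $X$ is the hull $\{\pi:\pi|_{SO(N-1)}\ge\sigma_0\}$ of $I_{0,\sigma_0}$ for any $\sigma_0\in S$, and $I=\ker X=I_{0,\sigma_0}$.

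For the ``in particular'' statement, since $\Min$-$\Primal(A)$ always contains every $\ker\pi_{t,\sigma}$, equality with $\Sub(A)$ holds if and only if every $I_{0,\sigma}$ is minimal primal. Suppose $J$ is a primal ideal with $J\subsetneq I_{0,\sigma}$. Since $\ker\pi$ is maximal for each $\pi\in SO(N)^\wedge$ and $\operatorname{hull}(I_{0,\sigma})$ is infinite, $J$ cannot be primitive; hence the sequence witnessing primality of $J$ (which, by separability, may be taken to converge to every point of $\operatorname{hull}(J)$) is not eventually constant, so it has a subsequence lying in $\mathcal{U}_N$. Applying the Section~3 convergence criterion to this subsequence --- whose parameters must tend to $0$ because $\operatorname{hull}(J)\supseteq\operatorname{hull}(I_{0,\sigma})$ contains points of $SO(N)^\wedge$, yet which would have to tend to a positive limit if $\operatorname{hull}(J)$ met $\mathcal{U}_N$ --- shows $\operatorname{hull}(J)\subseteq SO(N)^\wedge$, and then, as before, $\operatorname{hull}(J)\subseteq\{\pi\in SO(N)^\wedge:\pi|_{SO(N-1)}\ge\tau\text{ for all }\tau\in S'\}$ for some finite nonempty $S'\subseteq SO(N-1)^\wedge$. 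Therefore $I_{0,\sigma}$ fails to be minimal primal exactly when some such $S'$ makes this set strictly larger than the hull $\{\pi:\pi|_{SO(N-1)}\ge\sigma\}$ of $I_{0,\sigma}$. If $N$ is odd, let $\sigma$ be the all-$1$ signature of $SO(N-1)$ and $\sigma'$ the signature obtained from it by replacing the last entry by $0$ (for $N=3$, $\sigma=(1)$ and $\sigma'=(0)$); the branching rules give $\{\pi:\pi|_{SO(N-1)}\ge\sigma\}\subsetneq\{\pi:\pi|_{SO(N-1)}\ge\sigma'\}$, so $I_{0,\sigma'}\subsetneq I_{0,\sigma}$, $I_{0,\sigma}$ is not minimal primal, and $\Sub(A)\neq\Min$-$\Primal(A)$. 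If $N=2k$ is even, write $\sigma=(q_1,\ldots,q_{k-1})$; evaluating an inclusion $\{\pi:\pi|_{SO(N-1)}\ge\sigma\}\subseteq\bigcap_j\{\pi:\pi|_{SO(N-1)}\ge\sigma^{(j)}\}$ at the two signatures $(q_1,q_2,\ldots,q_{k-1},0)$ and $(q_1,q_1,q_2,\ldots,q_{k-1})$, both of which lie in $\{\pi:\pi|_{SO(N-1)}\ge\sigma\}$ by the branching rule, forces $\sigma^{(j)}=\sigma$ for every $j$; hence no $S'$ enlarges the hull, every $I_{0,\sigma}$ is minimal primal, and $\Sub(A)=\Min$-$\Primal(A)$.

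The main difficulty is the even-$N$ step, where one must exclude \emph{all} primal ideals strictly below $I_{0,\sigma}$, not merely the ideals $I_{0,\sigma'}$; the crux is the combinatorial fact --- supplied by the extreme-point evaluation above and genuinely false in the odd case --- that an intersection of finitely many restriction-hull sets $\{\pi:\pi|_{SO(N-1)}\ge\sigma^{(j)}\}$ can contain $\{\pi:\pi|_{SO(N-1)}\ge\sigma\}$ only when every $\sigma^{(j)}$ equals $\sigma$.
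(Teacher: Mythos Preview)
Your argument is correct. A few comparative remarks:

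For the description of $\Sub(A)$, the paper's proof is shorter: once one has a sequence $(P_n)=(\ker\pi_{t_n,\sigma_n})$ of separated points with $P_n\to I$ in $\tau_s$ and $\operatorname{hull}(I)\subseteq SO(N)^\wedge$, it simply passes to a subsequence on which $\sigma_n\equiv\sigma$; this subsequence is then $\tau_s$-convergent to $I_{0,\sigma}$ (by the paragraph preceding the proposition), and Hausdorffness of $\tau_s$ gives $I=I_{0,\sigma}$ immediately. Your route via the cluster-point characterisation and the explicit analysis of the limit set $X$ reaches the same conclusion but with more bookkeeping.

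For the ``in particular'' statement, the paper does not argue at all: it quotes the characterisation of $\Min$-$\Primal(A)$ from \cite[Proposition~4.6]{KST}. Your direct argument is therefore a genuine addition. It can be streamlined, however. Since $\Min$-$\Primal(A)\subseteq\Sub(A)$, any primal $J\subsetneq I_{0,\sigma}$ contains a minimal primal ideal, which by the first part must be of the form $I_{0,\sigma'}$ (it cannot be one of the maximal ideals $\ker\pi_{t,\tau}$). Hence $I_{0,\sigma}$ is minimal primal if and only if $I_{0,\sigma'}\subseteq I_{0,\sigma}$ forces $\sigma'=\sigma$. Your two-test-point evaluation at $(q_1,\ldots,q_{k-1},0)$ and $(q_1,q_1,q_2,\ldots,q_{k-1})$ then settles the even case cleanly (the constraints give $q'_i\le q_i$ from the first and $q'_i\ge q_i$ from the second), and your explicit strict containment $I_{0,\sigma'}\subsetneq I_{0,\sigma}$ for $\sigma=(1,\ldots,1)$, $\sigma'=(1,\ldots,1,0)$ handles the odd case. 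This avoids having to produce and analyse a sequence witnessing primality of a general $J$.
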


\begin{proof} Suppose that $I\in{\rm Sub}(A)$ and consider $\pi\in\widehat{A}$
such that $\ker\pi\supseteq I$. If $\pi\in\mathcal{U}_N$ then the
minimal primal ideal $\ker\pi$ contains the primal ideal $I$ and so
$I=\ker\pi$. So we may assume from now on that ${\rm
hull}(I)\subseteq\{\ker\pi:\pi\in SO(N)^{\wedge}\}$. By the
discussion at the start of Section $5$, there is a sequence $(P_n)$
of separated points of ${\rm Prim}(A)$ such that $P_n\to_{\tau_s}I$
as $n\to\infty$. In particular, for $P\in{\rm Prim}(A)$, $P_n\to P$
as $n\to\infty$ if and only if $P\supseteq I$.

For each $n\geq1$ there exists $t_n>0$ and $\sigma_n\in
SO(N-1)^{\wedge}$ such that $P_n= \ker\pi_{t_n,\sigma_n}$. Let
$\pi\in SO(N)^{\wedge}$ such that $\ker\pi\supseteq I$. Then
$\pi_{t_n,\sigma_n}\to\pi$ as $n\to\infty$ and so $(t_n)$ is a null
sequence and eventually $\sigma_n$ is contained in $\pi|_{SO(N-1)}$.
Replacing $(P_n)$ by a subsequence (which will also be
$\tau_s$-convergent to $I$), we may assume that $\sigma_n=\sigma$
(say) for all $n$. Then
$P_n=\ker\pi_{t_n,\sigma}\to_{\tau_s}I_{0,\sigma}$ as $n\to\infty$,
as observed at the start of this section. Since $\tau_s$ is
Hausdorff, $I=I_{0,\sigma}$ as required.

The final statement of the proposition follows from the
characterization of $\Min$-$\Primal(A)$ in \cite[Proposition
4.6]{KST}.
\end{proof}

The next two results deal with the cases $N\equiv0$ and $N\equiv2
\mod 4$. The strategies are similar in that both proofs use
Corollary~\ref{Orc(M(A))motion} for one estimate. However, for the
other estimate, the first case uses Theorem~\ref{M(A)and(n+1)}
whereas the second case requires the more complicated
Theorem~\ref{M(A)and(n+3/2)}. For the application of Theorems
\ref{M(A)and(n+1)} and ~\ref{M(A)and(n+3/2)}, we recall that, for
the separable $C^*$-algebra $A=C^*(G_N)$ ($N\geq3$), $\Glimm(A)$ is
normal (see Section 2) and $\phi_A$ is closed
(Theorem~\ref{Orc(G_N)}).

 \begin{thm}\label{Ncong0} Let $A=C^*(G_N)$ where
$N\equiv 0$ $({\rm mod}\ 4)$. Then
$$K(M(A))=K_s(M(A))=\frac{1}{2}{\rm Orc}(M(A))=N/4.$$
\end{thm}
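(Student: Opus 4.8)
Since $N \equiv 0 \pmod 4$, write $N = 4m$ so that $k = \lfloor N/2 \rfloor = 2m = N/2$ and $\lceil N/2 \rceil = N/2$. The target value is $\frac{1}{2}\lceil N/2 \rceil = N/4 = m$. The general strategy is the one announced in Section 2: establish the two inequalities $\lceil N/2 \rceil \le \Orc(M(A))$ and $K(M(A)) \le \frac{1}{2}\lceil N/2 \rceil$, and then close the loop using the already-recorded facts $K_s(M(A)) = \frac{1}{2}\Orc(M(A)) \le K(M(A))$. For the lower bound, Corollary~\ref{Orc(M(A))motion} already gives $\Orc(M(A)) \ge \lfloor N/2 \rfloor = N/2 = \lceil N/2 \rceil$ (the floor and ceiling agree since $N$ is even), so half of the work is done. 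Chaining the inequalities: $\frac{N}{4} = \frac{1}{2}\lceil N/2 \rceil \le \frac{1}{2}\Orc(M(A)) = K_s(M(A)) \le K(M(A)) \le \frac{1}{2}\lceil N/2 \rceil = \frac{N}{4}$, forcing equality throughout.

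\textbf{The upper bound.} The real content is showing $K(M(A)) \le N/4 = m$, and the tool is Theorem~\ref{M(A)and(n+1)} with $n = m-1$ (so that $n+1 = m = N/4$). We know $\Glimm(A)$ is normal and $\phi_A$ is closed (Theorem~\ref{Orc(G_N)}), so the hypotheses about the base space are satisfied. What must be verified is the combinatorial condition: whenever $G \in \Glimm(A)$ and $I^{(1)}, I^{(2)}, I^{(3)} \in \Sub(A)$ with $I^{(1)} \cap I^{(2)} \cap I^{(3)} \supseteq G$, there exist $S^{(i)} \in \Prim(A)$ with $I^{(i)} \subseteq S^{(i)}$ and a common $T \in \Prim(A)$ with $d_A(S^{(i)}, T) \le m-1$ for each $i$. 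Using the description of $\Glimm(A)$ from Theorem~\ref{Orc(G_N)}(ii) and of $\Sub(A)$ from Proposition~\ref{motion-sub}: if $G = \ker\pi_{t,\sigma}$ is a point of $\mathcal{U}_N$ then all three $I^{(i)}$ equal that same ideal and the condition is trivial. The substantive case is $G = I_0 = \bigcap\{\ker\pi : \pi \in SO(N)^\wedge\}$, where each $I^{(i)}$ is either some $\ker\pi_{t_i,\sigma_i}$ (forcing $I^{(i)} \supseteq I_0$, impossible since these are Glimm ideals distinct from $I_0$ — so actually each $I^{(i)}$ must be of the form $I_{0,\sigma_i}$) or $I_{0,\sigma_i}$ for some $\sigma_i \in SO(N-1)^\wedge$. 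So we reduce to: given $\sigma_1, \sigma_2, \sigma_3 \in SO(N-1)^\wedge$, find representations $\rho_i \in SO(N)^\wedge$ with $\rho_i|_{SO(N-1)} \ge \sigma_i$ (these give the $S^{(i)} = \ker\rho_i$ containing $I_{0,\sigma_i}$) and a common $T = \ker\tau$, $\tau \in SO(N)^\wedge$, with each $\ker\rho_i$ at graph-distance $\le m-1$ from $\ker\tau$. The natural choice is to take $\tau = (0,\ldots,0)$ or some fixed "small" signature and use the walk-construction from the proof of Proposition~\ref{G_Nsimpaths}(i): pushing any signature towards the zero signature takes $\lfloor k/2 \rfloor$-ish steps, and since $k = 2m$ here, a walk from a generic $\rho_i$ to the trivial representation has length about $m$. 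The point of having $\sigma_i$-compatibility freedom in choosing $\rho_i$ is to gain one step: one can pick $\rho_i$ so that it already sits "one step closer" to the target, getting the distance down to $m-1$. Making this gain precise — exhibiting, for arbitrary $\sigma = (q_1, \ldots, q_{k-1}) \in SO(N-1)^\wedge$, a representation $\rho \in SO(N)^\wedge$ with $\rho|_{SO(N-1)} \ge \sigma$ and $d_A(\ker\rho, \ker(0,\ldots,0)) \le m-1$ — is the heart of the argument.

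\textbf{The main obstacle.} The delicate point is precisely this one-step saving: a priori the worst $\rho$ with $\rho|_{SO(N-1)} \ge \sigma$ (e.g. $\rho = (q_1, q_1, q_2, \ldots, q_{k-1})$, the smallest such) could be at distance exactly $m$ from the trivial representation, matching the diameter bound of Proposition~\ref{G_Nsimpaths} rather than beating it. One must either choose $\rho$ cleverly (allowing some coordinates of $\rho$ to be large is fine — it is the \emph{tail} of zeros that controls the walk length, and the walk construction zeroes out coordinates in pairs from the tail inward) or observe that the very first step of the walk from $\rho$ can be arranged to be "free" because $\rho$ is chosen with an already-favorable tail structure. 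Concretely, for $N = 4m$ and $k = 2m$: a signature with its last coordinate zero is at distance $\le m-1$ from $(0,\ldots,0)$ if the walk construction of Proposition~\ref{G_Nsimpaths}(i) is re-examined, since zeroing $2(m-1)$ coordinates in $m-1$ paired steps suffices once one coordinate is already zero and the "middle-coordinate merge" trick handles the parity. Given $\sigma \in SO(N-1)^\wedge = SO(4m-1)^\wedge$, which has $k-1 = 2m-1$ coordinates, one can extend it to $\rho \in SO(4m)^\wedge$ (which has $2m$ coordinates) by appending a zero, i.e. $\rho = (q_1, \ldots, q_{2m-1}, 0)$, which indeed satisfies $\rho|_{SO(N-1)} \ge \sigma$ and has a zero in its last slot; then the distance bound $d_A(\ker\rho, \ker(0,\ldots,0)) \le m-1$ should follow from a slightly sharpened version of the walk argument. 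Once all three $S^{(i)} = \ker\rho_i$ are within $m-1$ of $T = \ker(0,\ldots,0)$, Theorem~\ref{M(A)and(n+1)} delivers $K(M(A)) \le (m-1)+1 = m = N/4$, completing the proof.
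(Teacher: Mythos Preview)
Your overall strategy is right and matches the paper: lower bound via Corollary~\ref{Orc(M(A))motion}, upper bound via Theorem~\ref{M(A)and(n+1)} with $n=m-1$, close the loop. The reduction to $G=I_0$ and $I^{(i)}=I_{0,\sigma_i}$ is also correct.

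The gap is in your choice of the common target. You propose $T=\ker(0,\ldots,0)$ and $S^{(i)}=\ker\rho_i$ with $\rho_i=(q_1,\ldots,q_{2m-1},0)$, and claim $d_A(S^{(i)},T)\le m-1$. This is false. The lower-bound argument of Proposition~\ref{G_Nsimpaths}(ii) shows that if $\pi\sim\pi'$ and $\pi$ has its last $k-i$ coordinates zero, then $\pi'$ has its last $k-i-1$ coordinates zero. Starting from $(0,\ldots,0)$, after $j$ steps at most the first $j$ coordinates can be nonzero. So if, say, $\sigma=(1,\ldots,1)$, then your $\rho=(1,\ldots,1,0)$ has $q_{2m-1}=1\neq0$ and hence $d_A(\ker\rho,\ker(0,\ldots,0))\ge 2m-1$, which already exceeds $m-1$ for every $m\ge1$. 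The ``one free step from the tail zero'' intuition does not give the saving you need; the trivial representation sits at an extreme of the graph, not at its centre.

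The missing idea is that the target $T$ cannot be chosen independently of the three $\sigma$'s. The paper sets $Q_i=\max\{q_i,q'_i,q''_i\}$ for $1\le i\le m$ and takes the common target to be $\tau=(Q_1,\ldots,Q_m,0,\ldots,0)$; crucially, the starting points are $\pi=(Q_1,q_1,\ldots,q_{k-2},0)$ (and similarly $\pi',\pi''$), which already have $Q_1$ in the first slot. The walk then pushes one more $Q_i$ in at the front and one more $0$ in at the back at each step, reaching $\tau$ in exactly $m-1$ steps. The ``meeting in the middle'' structure of Proposition~\ref{G_Nsimpaths}(i) is essential here; a fixed target cannot work.
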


\begin{proof} We begin with the case $N>4$ so that $N/4>1$. We aim to show
that $A$ satisfies the hypotheses of Theorem~\ref{M(A)and(n+1)} with
$n=\frac{N}{4}-1$. Let $I^{(1)},I^{(2)},I^{(3)}\in \Sub(A)$ with
$I^{(1)},I^{(2)},I^{(3)}$ all containing the same Glimm ideal $G$.
If $G$ is one of the separated points of $\Prim(A)$ then
$I^{(1)}=I^{(2)}=I^{(3)}=G$ and we may take
$S^{(1)}=S^{(2)}=S^{(3)}=T=G$.

We may suppose therefore that $G=I_0$ (see Theorem~\ref{Orc(G_N)})
and that $I^{(1)}=I_{0, \sigma}$, $I^{(2)}=I_{0, \sigma'}$, and
$I^{(3)}=I_{0, \sigma''}$ for some $\sigma, \sigma', \sigma''\in
SO(N-1)^{\wedge}$ by Proposition~\ref{motion-sub}. Let $\sigma=(q_1,
\ldots, q_{k-1})$, $\sigma'=(q'_1,\ldots, q'_{k-1})$, and
$\sigma''=(q''_1,\ldots, q''_{k-1})$, where $k=N/2$. Set $Q_i=\max\{
q_i, q'_i, q''_i\}$ for $1\le i\le N/4$. Define $\pi= (Q_1, q_1,
q_2, \ldots, q_{k-2},0)\in SO(N)^{\wedge}$ and similarly $\pi'$ and
$\pi''$ (replacing the $q_j$ by $q_j'$ and $q_j''$ respectively).
Since $\pi|_{SO(N-1)} \geq \sigma$, $\ker\pi\supseteq I^{(1)}$.
Similarly, $\ker\pi'\supseteq I^{(2)}$ and $\ker\pi''\supseteq
I^{(3)}$.

There is an $(\frac{N}{4}-1)$-step $\sim$-walk from each of $\pi$,
$\pi'$, $\pi''$ to $\rho:=(Q_1,\ldots , Q_{\frac{N}{4}}, 0,\ldots ,
0)$ in $\widehat{G_N}$. To see this, note that for $1\le i\le
\frac{N}{4}-2$,
$$(Q_1, \ldots , Q_i, q_i, q_{i+1}, \ldots , q_{k-1-i}, 0, \ldots, 0)\sim
(Q_1, \ldots, Q_i, Q_{i+1}, q_{i+1}, \ldots, q_{k-2-i}, 0, \ldots ,
0)$$ because the restrictions to $SO(N-1)$ contain $(Q_1,\ldots,
Q_i, q_{i+1}, \ldots, q_{k-1-i}, 0,\ldots, 0),$ and finally
$$(Q_1, \ldots, Q_{\frac{N}{4}-1}, q_{\frac{N}{4}-1}, q_{\frac{N}{4}}, 0, \ldots, 0) \sim
(Q_1, \ldots , Q_{\frac{N}{4}}, 0, \ldots, 0)=\rho$$ because the
restrictions to $SO(N-1)$ contain $(Q_1, \ldots, Q_{\frac{N}{4}-1},
q_{\frac{N}{4}}, 0, \ldots, 0)$. Similar arguments apply to $\pi'$
and $\pi''$, replacing the $q_j$ by $q_j'$ and $q_j''$
(respectively).

Thus taking $S^{(1)}=\ker\pi$, $S^{(2)}=\ker\pi'$,
$S^{(3)}=\ker\pi''$, and $T=\ker\rho$ we have satisfied the
hypotheses of Theorem~\ref{M(A)and(n+1)}. Thus $K(M(A))\le
\frac{N}{4}$. Combining this with Theorem~\ref{Orc(G_N)}(i),
Corollary~\ref{Orc(M(A))motion} and \cite[Theorem 4.4]{Som}, we have
$$K(M(A))\le \frac{N}{4}\leq \frac{{\rm Orc}(A)}{2}
\leq \frac{{\rm Orc}(M(A))}{2}=K_s(M(A))\leq K(M(A))$$
 and hence equality throughout.

 For the simpler case $N=4$, it again suffices to check the hypothesis of
 Theorem~\ref{M(A)and(n+1)} for $G=I_0$ (the only non-maximal Glimm ideal of $A$).
 So let
 $I^{(1)}=I_{0,(q)}$, $I^{(2)}=I_{0,(q')}$ and $I^{(3)}=I_{0,(q'')}$ where
 $(q),(q'),(q'')\in SO(3)^{\wedge}$. Let $Q=\max\{q,q',q''\}$ and
 let $\pi=(Q,0)\in SO(4)^{\wedge}$. Then $\pi|_{SO(3)}$ contains
 $(q)$,$(q')$ and $(q'')$ and so $\ker\pi\supseteq I^{(1)}+I^{(2)}+I^{(3)}$. Using
 Theorem~\ref{M(A)and(n+1)} in the case $n=0$ and arguing as above, we have
 $$K(M(A))\le 1\leq \frac{{\rm Orc}(A)}{2}
\leq \frac{{\rm Orc}(M(A))}{2}=K_s(M(A))\leq K(M(A))$$
 and hence equality throughout.
 \end{proof}

\begin{thm}\label{Ncong2} Let $A=C^*(G_N)$ where $N\equiv 2$ $({\rm mod}\ 4)$ and
$N\geq6$. Then
$$K(M(A))=K_s(M(A))=\frac{1}{2}{\rm Orc}(M(A))=N/4.$$
\end{thm}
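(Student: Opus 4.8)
The plan is to combine the Section~4 lower bound for $\Orc(M(A))$ with the upper-bound machinery of Section~5, specifically Theorem~\ref{M(A)and(n+3/2)}. Write $N=4m+2$ with $m\ge1$ and put $k=N/2=2m+1$. Since $\Orc(A)=\lfloor N/2\rfloor=k$ by Theorem~\ref{Orc(G_N)}(i), Corollary~\ref{Orc(M(A))motion} gives $\Orc(M(A))\ge k$, so by \cite[Theorem 4.4]{Som}
\[\frac{N}{4}=\frac{k}{2}\le\frac12\Orc(M(A))=K_s(M(A))\le K(M(A)).\]
It therefore suffices to prove $K(M(A))\le N/4$, and I would obtain this by verifying that $A$ satisfies the hypotheses of Theorem~\ref{M(A)and(n+3/2)} with $n=m-1=\frac{N}{4}-\frac32$ (so that $n\ge0$, which is exactly where $N\ge6$ enters); recall that $\Glimm(A)$ is normal and $\phi_A$ is closed, as that theorem requires.

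So let $G\in\Glimm(A)$ and $I^{(1)},I^{(2)},I^{(3)}\in\Sub(A)$ with $I^{(1)}\cap I^{(2)}\cap I^{(3)}\supseteq G$. By Theorem~\ref{Orc(G_N)}(ii), $G$ is either one of the separated Glimm ideals $\ker\pi_{t,\sigma}$, or $G=I_0$. In the first case the whole hull of $G$ is $\{\ker\pi_{t,\sigma}\}$, so each $I^{(i)}=G$, and one takes $S^{(i)}=T^{(i)}=G$: then $d_A(S^{(i)},T^{(i)})=0\le m-1$, and $T^{(1)}\cap T^{(2)}\cap T^{(3)}=G$ is primitive, hence primal. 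So assume $G=I_0$. No $\ker\pi_{t,\sigma}$ contains $I_0$ (the hull of $I_0$ lies in $SO(N)^\wedge$, which is disjoint from $\mathcal{U}_N$), so Proposition~\ref{motion-sub} forces $I^{(i)}=I_{0,\sigma_i}$ for certain $\sigma_i=(q^{(i)}_1,\ldots,q^{(i)}_{k-1})\in SO(N-1)^\wedge$, and the containment over $i$ then holds automatically.

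The core step is thus the case $G=I_0$. Put $Q_\ell=\max\{q^{(1)}_\ell,q^{(2)}_\ell,q^{(3)}_\ell\}$, so $Q_1\ge Q_2\ge\cdots\ge Q_{k-1}\ge0$. Fixing $i$ and writing $q_j$ for $q^{(i)}_j$, I would set $\pi_i:=(Q_1,q_1,q_2,\ldots,q_{k-2},0)\in SO(N)^\wedge$ --- a valid signature since $Q_1\ge q_1$ --- whose restriction to $SO(N-1)$ contains $\sigma_i$; then $\ker\pi_i\supseteq I_{0,\sigma_i}=I^{(i)}$, and I take $S^{(i)}:=\ker\pi_i$. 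Exactly as in the proofs of Proposition~\ref{G_Nsimpaths}(i) and Theorem~\ref{Ncong0}, for $1\le t\le m-1$ one has the ``shift''
\[(Q_1,\ldots,Q_t,q_t,q_{t+1},\ldots,q_{k-1-t},0,\ldots,0)\sim(Q_1,\ldots,Q_{t+1},q_{t+1},\ldots,q_{k-2-t},0,\ldots,0),\]
since both sides restrict to $SO(N-1)$ so as to contain $(Q_1,\ldots,Q_t,q_{t+1},\ldots,q_{k-1-t},0,\ldots,0)$. Starting from $\pi_i$ (the case $t=1$) this gives an $(m-1)$-step $\sim$-walk to $(Q_1,\ldots,Q_m,q_m,0,\ldots,0)$, so with $T^{(i)}:=\ker(Q_1,\ldots,Q_m,q^{(i)}_m,0,\ldots,0)$ one has $d_A(S^{(i)},T^{(i)})\le m-1$. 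Finally, the restriction to $SO(N-1)$ of each $(Q_1,\ldots,Q_m,q^{(i)}_m,0,\ldots,0)$ contains the single signature $\tau:=(Q_2,\ldots,Q_m,Q_m,0,\ldots,0)\in SO(N-1)^\wedge$ (to be read as $(Q_1,0,\ldots,0)$ when $m=1$) --- a direct check of the interlacing inequalities using $Q_\ell\ge Q_{\ell+1}$ and $Q_m\ge q^{(i)}_m\ge0$. Hence, for any null sequence $(s_n)$ in $(0,\infty)$, the points $\pi_{s_n,\tau}$ converge to each of $T^{(1)},T^{(2)},T^{(3)}$, the hull of $T^{(1)}\cap T^{(2)}\cap T^{(3)}$ being the finite set $\{T^{(1)},T^{(2)},T^{(3)}\}$; so $T^{(1)}\cap T^{(2)}\cap T^{(3)}$ is primal by \cite[Proposition 3.2]{AB}.

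This would complete the verification of the hypotheses of Theorem~\ref{M(A)and(n+3/2)} with $n=m-1$, giving $K(M(A))\le(m-1)+\frac32=N/4$; combined with the first paragraph this yields
\[\frac{N}{4}\le\frac12\Orc(M(A))=K_s(M(A))\le K(M(A))\le\frac{N}{4},\]
hence equality throughout. The part I expect to be most delicate is the combinatorial bookkeeping with $SO(N)$-signatures: verifying each $\sim$-relation along the walk, the common subrepresentation $\tau$, and above all that the walk has length exactly $m-1$ (which is precisely what makes $N\ge6$ the right hypothesis). Everything else is a direct appeal to the results of Sections 2, 4 and 5.
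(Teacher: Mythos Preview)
Your proof is correct and follows essentially the same route as the paper's: verify the hypotheses of Theorem~\ref{M(A)and(n+3/2)} with $n=m-1$ via the same ``shift'' walk from $\pi_i=(Q_1,q_1,\ldots,q_{k-2},0)$, then combine with Corollary~\ref{Orc(M(A))motion}. The only cosmetic differences are that your walk terminates at $(Q_1,\ldots,Q_m,q_m,0,\ldots,0)$ (which is what the shift relation actually produces; the paper writes $q_{m+1}$ there), you use $(Q_2,\ldots,Q_m,Q_m,0,\ldots,0)$ rather than the paper's $(Q_1,\ldots,Q_m,0,\ldots,0)$ as the common $SO(N-1)$-subrepresentation (both work), and your formulation absorbs the $N=6$ case without treating it separately.
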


\begin{proof} Let $k=N/2$ and $m=(N-2)/4$, so that $k=2m+1$. We begin with
the case $N>6$ so that $m>1$. We aim to show that $A$ satisfies the
hypotheses of Theorem~\ref{M(A)and(n+3/2)} with $n=m-1$. Let
$I^{(1)},I^{(2)},I^{(3)}\in \Sub(A)$ with $I^{(1)},I^{(2)},I^{(3)}$
all containing the same Glimm ideal $G$. If $G$ is one of the
separated points of $\Prim(A)$ then $I^{(1)}=I^{(2)}=I^{(3)}=G$ and
we may take $S^{(i)}=T^{(i)}=G$ $(1\leq i\leq3)$ so that
$T^{(1)}\cap T^{(2)}\cap T^{(3)}=G$ which is primal.

We may suppose therefore that $G=I_0$ (see Theorem~\ref{Orc(G_N)})
and that $I^{(1)}=I_{0, \sigma}$, $I^{(2)}=I_{0, \sigma'}$, and
$I^{(3)}=I_{0, \sigma''}$ for some $\sigma, \sigma', \sigma''\in
SO(N-1)^{\wedge}$ by Proposition~\ref{motion-sub}. Let $\sigma=(q_1,
\ldots, q_{k-1})$, $\sigma'=(q'_1,\ldots, q'_{k-1})$, and
$\sigma''=(q''_1,\ldots, q''_{k-1})$. Set $Q_i=\max\{ q_i, q'_i,
q''_i\}$ for $1\le i\le m$. Define $\pi= (Q_1, q_1, q_2, \ldots,
q_{k-2},0)\in SO(N)^{\wedge}$ and similarly $\pi'$ and $\pi''$
(replacing the $q_j$ by $q_j'$ and $q_j''$ respectively). Since
$\pi|_{SO(N-1)} \geq \sigma$, $\ker\pi\supseteq I^{(1)}$. Similarly,
$\ker\pi'\supseteq I^{(2)}$ and $\ker\pi''\supseteq I^{(3)}$.

There is an $(m-1)$-step $\sim$-walk in $\widehat{G_N}$ from $\pi$
to $\rho:=(Q_1,\ldots , Q_m,q_{m+1} 0,\ldots , 0)\in
SO(N)^{\wedge}$. To see this (as in the proof of
Theorem~\ref{Ncong0}), note that for $1\le i\le m-1$,
$$(Q_1, \ldots , Q_i, q_i, q_{i+1}, \ldots , q_{k-1-i}, 0, \ldots, 0)\sim
(Q_1, \ldots, Q_i, Q_{i+1}, q_{i+1}, \ldots, q_{k-2-i}, 0, \ldots ,
0)$$ because the restrictions to $SO(N-1)$ contain $(Q_1,\ldots,
Q_i, q_{i+1}, \ldots, q_{k-1-i}, 0,\ldots, 0),$ Similarly, there is
an $(m-1)$-step $\sim$-walk from $\pi'$ to $\rho':=(Q_1,\ldots ,
Q_m,q'_{m+1}, 0,\ldots , 0)$ and from $\pi''$ to
$\rho'':=(Q_1,\ldots , Q_m,q''_{m+1}, 0,\ldots , 0)$.

  The restrictions of $\rho$, $\rho'$ and $\rho''$ to
  $SO(N-1)$ contain $\mu:=(Q_1,\ldots,Q_m,0,\ldots,0)$ as a
  common subrepresentation and so if $(t_n)$ is any null sequence in $(0,\infty)$ then
  $\pi_{t_n,\mu}\to \rho,\rho',\rho''$ in $\widehat{G_N}$
  as $n\to\infty$. Hence
$\ker\rho\cap\ker\rho'\cap\ker\rho''$ is a primal ideal of
  $A$.

Taking $S^{(1)}=\ker\pi$, $S^{(2)}=\ker\pi'$, $S^{(3)}\ker\pi''$ and
 $T^{(1)}=\ker\rho$, $T^{(2)}=\ker\rho'$, $T^{(3)}\ker\rho''$, we have satisfied
 the hypotheses of Theorem~\ref{M(A)and(n+3/2)}.
Thus $K(M(A))\le (m-1)+3/2=N/4$. Combining this with
Theorem~\ref{Orc(G_N)}(i), Corollary~\ref{Orc(M(A))motion} and
\cite[Theorem 4.4]{Som}, we have
$$K(M(A))\le \frac{N}{4}\leq \frac{{\rm \Orc}(A)}{2}
\leq \frac{{\rm \Orc}(M(A))}{2}=K_s(M(A))\leq K(M(A))$$
 and hence equality throughout.

 For the simpler case $N=6$, it again suffices to check the hypothesis of
 Theorem~\ref{M(A)and(n+3/2)}
 for the Glimm ideal  $G=I_0$. With notation as above, we have that
 the restrictions to $SO(N-1)$ of $\pi=(Q_1,q_1,0)$,
 $\pi'=(Q_1,q'_1,0)$ and $\pi''=(Q_1,q''_1,0)$ contain $(Q_1,0)$ as a
 common subrepresentation and so the ideal $\ker\pi\cap\ker\pi'\cap\ker\pi''$
 is primal. Applying Theorem~\ref{M(A)and(n+3/2)} in the case $n=0$, we have
$$K(M(A))\le \frac{3}{2}\leq \frac{{\rm Orc}(A)}{2}
\leq \frac{{\rm Orc}(M(A))}{2}=K_s(M(A))\leq K(M(A))$$
 and hence equality throughout.
 \end{proof}

\section{The constant $D(A)$ and the value of $\Orc(M(A))$.}

If we apply the strategy of the previous section to the case where
$N$ is odd, we find that $K(M(C^*(G_N)))\in
[\frac{N-1}{4},\frac{N+1}{4}]$ rather than obtaining an exact value.
This forces us to improve on the estimate for $\Orc(M(C^*(G_N)))$
that was given in Corollary~\ref{Orc(M(A))motion}.

In the first part of this section we obtain an upper bound for
$\Orc(M(A))$, in terms of the ideal structure of $A$, which is
applicable to a fairly general class of $C^*$-algebras $A$. In the
second part, we obtain the precise value of $\Orc(M(A))$ for a
smaller class of $C^*$-algebras which does, however, contain the
group $C^*$-algebras of the motion groups.


Recall from Section $5$ that, for a C$^*$-algebra $A$, $\Sub(A)$ is
the $\tau_s$-closure of $\Min$-$\Primal(A)$ in $\Primal'(A)$. We now
define a graph structure on $\Sub(A)$. For $I, J\in \Sub(A)$ write
$I*J$ if $I+J\ne A$. The relation $*$ defines a graph structure on
$\Sub(A)$ analogous to the graph structure on $\Prim(A)$ defined by
the relation $\sim$. Let $d^*(I,J)$ denote the distance between $I$
and $J$ in the graph $(\Sub(A), *)$. As before, we define the
diameter of a $*$-component to be the supremum of the distances
between points in the component, with the exception that this time
we define the diameter of a singleton to be $0$ (rather than $1$ as
in $(\Prim(A),\sim)$). Let $D(A)$ be the supremum of the diameters
of $*$-connected components of $\Sub(A)$.

For example, let $A=C^*(SL(2,{\bf C}))$ (see \cite[Example
5.1]{AKSI} for notation). Then, apart from loops, the only edge in
$(\Sub(A),*)$ is $J*P_{2,0}$. Hence $D(A)=1$.

It is natural to begin by investigating the case $D(A)=0$. This turns out to give
a description of quasi-standard C$^*$-algebras.

\begin{lemma}\label{D(A)=0} Let $A$ be a C$^*$-algebra. Then $D(A)=0$ if and only if $A$
is quasi-standard.
\end{lemma}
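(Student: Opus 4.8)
The plan is to turn the condition $D(A)=0$ into a statement about ideals and then to match it with the characterizations of quasi-standard $C^*$-algebras in \cite[Theorem 3.3]{ASo}. First I would observe that, since a non-singleton $*$-component of $\Sub(A)$ contains two distinct vertices and hence has diameter at least $1$, $D(A)=0$ holds precisely when every $*$-component of $\Sub(A)$ is a singleton, i.e.\ when $I+J=A$ for all distinct $I,J\in\Sub(A)$. As the sum of two closed ideals of a $C^*$-algebra is again a closed ideal, with hull ${\rm hull}(I)\cap{\rm hull}(J)$, this is in turn equivalent to the statement that distinct members of $\Sub(A)$ have disjoint hulls in $\Prim(A)$.

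For the forward implication, suppose $A$ is quasi-standard. By \cite[Theorem 3.3]{ASo} we have $\Sub(A)=\Glimm(A)$ as sets, and for $G\in\Glimm(A)$ the identity ${\rm hull}(G)=\{P\in\Prim(A):\phi_A(P)=G\}$ (using the fact, recalled in Section 2, that $P\supseteq G$ if and only if $\phi_A(P)=G$) shows that distinct Glimm ideals have disjoint hulls. By the first paragraph, $D(A)=0$.

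Conversely, assume $D(A)=0$. My first step would be to prove $\Sub(A)=\Min$-$\Primal(A)$: if some $I\in\Sub(A)$ were not minimal primal then, since a decreasing chain of primal ideals has primal intersection (cf.\ \cite{AB}), Zorn's Lemma would supply a minimal primal ideal $I'\subsetneq I$, and then $I'\in\Min$-$\Primal(A)\subseteq\Sub(A)$ with $I'+I=I\ne A$, contradicting the first paragraph. Hence the minimal primal ideals are pairwise coprime, so every $P\in\Prim(A)$ contains a \emph{unique} minimal primal ideal, and the hulls ${\rm hull}(I)$, $I\in\Min$-$\Primal(A)$, partition $\Prim(A)$. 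I would then verify that these hulls are exactly the $\sim$-components of $\Prim(A)$. Each ${\rm hull}(I)$ is a $\sim$-clique: the primality of $I$ yields a net in $\Prim(A)$ converging to a dense subset of ${\rm hull}(I)$, such a net necessarily converges to every point of ${\rm hull}(I)$, and hence any two points of ${\rm hull}(I)$ are inseparable. No $\sim$-edge joins distinct hulls: if $P\sim Q$ there is a net converging to both $P$ and $Q$, so $P\cap Q$ contains a primal ideal (the kernel of the set of limits of that net) and therefore a minimal primal ideal, which by the uniqueness just noted forces $P$ and $Q$ into the same hull. Granting the remaining identity $\Glimm(A)=\Min$-$\Primal(A)$ --- equivalently, that no Glimm class is a union of several of these hulls --- we obtain $\Glimm(A)=\Min$-$\Primal(A)=\Sub(A)$ as sets, and then $A$ is quasi-standard by \cite[Theorem 3.3 ((v)$\rightarrow$(i))]{ASo}.

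The step I expect to be the main obstacle is precisely this last one in the converse. The coprimality hypothesis separates two hulls ${\rm hull}(I)$, ${\rm hull}(J)$ only within the closed set ${\rm hull}(I)\cup{\rm hull}(J)$, where the decomposition $A/(I\cap J)\cong A/I\oplus A/J$ renders each of them clopen; since $\Prim(A)$ need not be normal, one cannot in general extend the corresponding indicator function to a bounded continuous function on $\Prim(A)$ separating a point of one hull from a point of the other. Excluding a Glimm class that splits into several such hulls is thus where one must invoke the structure theory of $\Sub(A)$ and $\Glimm(A)$ from \cite[Theorem 3.3]{ASo} rather than argue by hand.
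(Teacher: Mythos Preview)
Your argument is essentially the paper's argument, but you have manufactured an artificial obstacle in the converse direction by misidentifying the content of condition (v) in \cite[Theorem 3.3]{ASo}. That condition is precisely that $\Min$-$\Primal(A)$ is $\tau_s$-closed in $\Primal'(A)$ (equivalently, $\Sub(A)=\Min$-$\Primal(A)$) together with the statement that every primitive ideal contains a \emph{unique} minimal primal ideal. You establish both of these: your Zorn argument gives $\Sub(A)=\Min$-$\Primal(A)$, and the pairwise coprimality of minimal primals forces uniqueness. At that point \cite[Theorem 3.3 ((v)$\Rightarrow$(i))]{ASo} applies directly and yields that $A$ is quasi-standard; the identity $\Glimm(A)=\Min$-$\Primal(A)$ is a \emph{consequence} of quasi-standardness, not a prerequisite, so the ``main obstacle'' you flag does not exist.

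The paper's proof takes exactly this route but is terser. For the converse it observes: given $I\in\Sub(A)$, choose a minimal primal $J\subseteq I$; then $I+J=I\neq A$, so $I*J$, whence $I=J$ since $D(A)=0$. Thus $\Sub(A)=\Min$-$\Primal(A)$. Uniqueness of the minimal primal below each primitive ideal follows because two distinct minimal primals $I,J$ below the same $P$ would satisfy $I*J$, giving $D(A)\ge1$. Your intermediate analysis of the $\sim$-components (showing they coincide with the hulls of minimal primals) is correct but unnecessary for the lemma. For the forward direction the paper invokes the same two ingredients ($\Sub(A)=\Min$-$\Primal(A)$ and uniqueness of the minimal primal below each primitive) rather than $\Sub(A)=\Glimm(A)$; both routes are immediate from \cite[Theorem 3.3]{ASo}.
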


\begin{proof} Suppose that $A$ is quasi-standard.
By \cite[Theorem 3.3 ((i) $\Rightarrow$ (v))]{ASo},
$\Sub(A)=\Min$-$\Primal(A)$ and every primitive ideal of $A$
contains a unique minimal primal ideal.  Hence $D(A)=0$.

Conversely, suppose that $D(A)=0$ and that $I\in \Sub(A)$. There
exists a minimal primal ideal $J$ of $A$ such that $ J\subseteq I$.
Since $I*J$ and $D(A)=0$, $I=J$. Hence $\Min$-$\Primal(A)$ is
$\tau_s$-closed in $\Primal'(A)$. Furthermore each primitive ideal
of $A$ must contain a unique minimal primal ideal, for otherwise
$D(A)\ge 1$. Hence $A$ is quasi-standard by \cite[Theorem 3.3 ((v)
$\Rightarrow$ (i))]{ASo}.
\end{proof}

\bigskip

 The only motion group $G_N$ for which the C$^*$-algebra is
quasi-standard is $G_2$, and this has been studied in \cite[Example
4.1]{AKSI}.

\medskip

If $I*J$ then there exists $P\in \Prim(A)$ such that $P\supseteq
I+J$, and if $P,Q\in \Prim(A)$ with $P\sim Q$ then there exists
$I\in \Min$-$\Primal(A)\subseteq\Sub(A)$ with $I\subseteq P\cap Q$.
Hence if $I_0*\ldots
*I_k$ is a walk in $\Sub(A)$ of length $k\ge 2$ then there is a walk
$P_1\sim\dots \sim P_k$ of length $k-1$ in $\Prim(A)$, where
$P_i\supseteq I_{i-1}+I_i$. Any strictly shorter walk between $P_1$
and $P_k$ yields a strictly shorter walk between $I_0$ and $I_k$. In
this way one sees that $D(A)\le \Orc(A)+1$, and a similar argument
shows that $\Orc(A)\le D(A)+1$ (and also that $D(A)$ is infinite if
and only if $\Orc(A)$ is infinite). Similarly, for $G\in \Glimm(A)$,
$\{P\in\Prim(A): P\supseteq G\}$ is a $\sim$-connected subset of
$\Prim(A)$ if and only if $\{J\in\Sub(A): J\supseteq G\}$ is a
$*$-connected subset of $\Sub(A)$.

\begin{lemma}\label{Orc(M(A)leqn} Let $A$ be a $\sigma$-unital C$^*$-algebra and let
 $n$ be a positive integer. Suppose that for all $G\in \Glimm(A)$ and
 all $R, S\in \Prim(M(A)/H_G)$, $d_{M(A)}(R, S)\le n$. Then $\Orc(M(A))\le n$.
\end{lemma}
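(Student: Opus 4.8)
The plan is to argue by contradiction, pushing a hypothetical long $\sim$-walk in $\Prim(M(A))$ down into one of the distinguished Glimm classes $H_K$ ($K\in\Glimm(A)$), where the hypothesis can be applied. So suppose $\Orc(M(A))>n$. Since $n\ge1$ and singleton $\sim$-components have diameter $1$ by our convention, there is a non-singleton $\sim$-component of diameter $>n$, hence points $P,Q\in\Prim(M(A))$ with $n+1\le d_{M(A)}(P,Q)<\infty$; replacing $Q$ by an intermediate vertex of a shortest walk from $P$ to $Q$, I may assume $d_{M(A)}(P,Q)=n+1$. Since $\sim$ implies $\approx$ and $\approx$ is an equivalence relation, $P$ and $Q$ lie in a common Glimm class of $M(A)$, that is, $P,Q\in\Prim(M(A)/H)$ for some $H\in\Glimm(M(A))$.

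Next I would build a chain separating $P$ from $Q$. As $M(A)$ is unital, $\Prim(M(A))$ is compact, so \cite[Lemma 2.4]{Som} (the original form of Lemma~\ref{admissible-chain}) applies to the one-point sets $\{P\}$ and $\{Q\}$ and produces a chain $X_1,\dots,X_{n+1}$ of closed subsets of $\Prim(M(A))$ with $P\in X_1\setminus X_2$ and $Q\in X_{n+1}\setminus X_n$. The elementary but crucial point is that the end cells are open: in a chain $X_1$ is disjoint from $X_3,\dots,X_{n+1}$ and $\bigcup_i X_i=\Prim(M(A))$, so $X_1\setminus X_2=\Prim(M(A))\setminus(X_2\cup\dots\cup X_{n+1})$ is a complement of finitely many closed sets, and likewise $X_{n+1}\setminus X_n$ is open. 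Thus $X_1\setminus X_2$ and $X_{n+1}\setminus X_n$ are open neighbourhoods of $P$ and $Q$ respectively.

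Now I would invoke Proposition~\ref{Prop-Glimm-normal}. Since $A$ is $\sigma$-unital, $\Glimm(A)$ is normal (as recalled in Section~2), so Proposition~\ref{Prop-Glimm-normal} applies with the Glimm ideal $H$, the two points $P,Q\in\Prim(M(A)/H)$ and the neighbourhoods $X_1\setminus X_2\ni P$ and $X_{n+1}\setminus X_n\ni Q$. It yields a single $K\in\Glimm(A)$ together with $P',Q'\in\Prim(M(A)/H_K)$ such that $P'\in X_1\setminus X_2$ and $Q'\in X_{n+1}\setminus X_n$. On one hand, \cite[Lemma 2.1]{Som} applied to the chain $X_1,\dots,X_{n+1}$ (of length $n+1>1$) gives $d_{M(A)}(P',Q')\ge n+1$; on the other hand, since $P',Q'\in\Prim(M(A)/H_K)$ with $K\in\Glimm(A)$, the hypothesis forces $d_{M(A)}(P',Q')\le n$. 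This contradiction shows $\Orc(M(A))\le n$.

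The substantive idea, and the only point that really needs care, is to apply Proposition~\ref{Prop-Glimm-normal} not to $P$ and $Q$ themselves but to nearby points trapped inside the open end cells of a separating chain: the chain lower bound of \cite[Lemma 2.1]{Som} is insensitive to such perturbations, so it collides with the upper bound supplied by the hypothesis as soon as the perturbed points have been pushed into some $\Prim(M(A)/H_K)$. Everything else is routine: openness of the end cells (from the disjointness pattern of a chain), the availability of \cite[Lemma 2.4]{Som} and \cite[Lemma 2.1]{Som} on the compact space $\Prim(M(A))$ (from unitality of $M(A)$), and normality of $\Glimm(A)$ (from $\sigma$-unitality of $A$).
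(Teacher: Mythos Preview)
Your proof is correct, but you take a different (and shorter) route than the paper. The paper does not invoke Proposition~\ref{Prop-Glimm-normal} at this point; instead, having built the chain $X_1,\dots,X_k$, it chooses auxiliary elements $b,c\in M(A)$ supported in the open end cells $X_1\setminus X_2$ and $X_k\setminus X_{k-1}$, forms the closed sets $V=\{G\in\Glimm(A):\Vert b+H_G\Vert\ge\tfrac12\}$ and $W=\{G\in\Glimm(A):\Vert c+H_G\Vert\ge\tfrac12\}$, and uses semicontinuity of norm functions on $\Prim(M(A))$ and $\Glimm(M(A))$ to see that $H$ lies in the closures of $\iota(V)$ and $\iota(W)$; normality of $\Glimm(A)$ together with \cite[Lemma~3.1]{AKSI} then forces $V\cap W\neq\emptyset$, yielding $G\in\Glimm(A)$ and $T,T'\in\Prim(M(A)/H_G)$ lying in the two end cells. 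Your single application of Proposition~\ref{Prop-Glimm-normal} (with two points and two open neighbourhoods) packages exactly this passage in one black-box step; since the proposition is already available in the paper for $n\ge1$, your argument is a legitimate and more economical alternative. The remaining steps---choosing $P,Q$ with $d_{M(A)}(P,Q)=n+1$, building the chain via \cite[Lemma~2.4]{Som}, and deriving the contradiction from \cite[Lemma~2.1]{Som}---match the paper's.
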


\begin{proof} We may assume that $\Orc(M(A))\geq2$. Let $H$ be a Glimm ideal
of $M(A)$ and suppose that $R,S\in \Prim(M(A)/H)$ with $2\le
d_{M(A)}(R,S)<\infty$. Set $d_{M(A)}(R,S)=k$. Then by
 \cite[Lemma 2.4]{Som} or Lemma~\ref{admissible-chain} (applied to $M(A)$),
 there is an
admissible chain $X_1, \ldots, X_k$ of length $k$ of closed subsets
of $\Prim(M(A))$ such that $R\in X_1\setminus X_2$ and $S\in
X_k\setminus X_{k-1}$. Since $X_1\setminus X_2$ is an open subset of
$\Prim(M(A))$, there exists $b\in M(A)$ such that $\Vert
b\Vert=\Vert b_R\Vert=1$ and $b+T=0$ for all $T\in \Prim(M(A))$ with
$T\notin X_1\setminus X_2$. Similarly, there exists $c\in M(A)$ such
that $\Vert c\Vert=\Vert c_S\Vert=1$ and $c+T=0$ for all $T\in
\Prim(M(A))$ with $T\notin X_k\setminus X_{k-1}$.

Set $V=\{G\in \Glimm(A): \Vert b+H_G\Vert\ge 1/2\}$ and $W=\{G\in
\Glimm(A): \Vert c+H_G\Vert\ge 1/2\}$. Using the canonical
homeomorphism $\iota:\beta\Glimm(A)\to\Glimm(M(A))$ (see Section
$2$) and the upper semi-continuity of norm functions on
$\Glimm(M(A))$, we obtain that $V$ and $W$ are closed subsets of
$\Glimm(A)$. Furthermore, $H$ lies in the closure of both $\iota(V)$
and $\iota(W)$. To see this, let $(P_{\alpha})$ be a net in
$\Prim(A)$ with $\tilde P_{\alpha}\to R$. Set
$G_{\alpha}=\phi_A(P_{\alpha})$. Then, by the continuity of
$\phi_{M(A)}$,
 $$H_{G_{\alpha}}=i(G_{\alpha})=\phi_{M(A)}(\tilde
P_{\alpha})\to\phi_{M(A)}(R)= H.$$
Eventually $\Vert b+\tilde
P_{\alpha}\Vert>1/2$, by lower semi-continuity of norm functions on
$\Prim(M(A))$, and hence eventually $\Vert
b+H_{G_{\alpha}}\Vert\ge\Vert b+\tilde P_{\alpha}\Vert>1/2$. Thus
eventually $G_{\alpha}\in V$, so $H$ lies in the closure of
$\iota(V)$. Similarly $H$ lies in the closure of $\iota(W)$.

Since $A$ is $\sigma$-unital, $\Glimm(A)$ is normal (see Section
$2$)
 and so $V\cap W$ is non-empty by \cite[Lemma 3.1]{AKSI}. Let $G\in V\cap
W$. Then there exists $T \in \Prim(M(A)/H_G)$ such that $\Vert
b+T\Vert>0$, and hence such that $T\in X_1\setminus X_2$. Similarly
there exists $T'\in \Prim(M(A)/H_G)$ such that $T'\in X_k\setminus
X_{k-1}$. But then $d_{M(A)}(T,T')\ge k$ by \cite[Lemma 2.1]{Som}.
Hence $k\le n$, by hypothesis, so $\Orc(M(A))\le n$ as required.
\end{proof}

\bigskip

 We are now ready for the first result linking $\Orc(M(A))$ and
$D(A)$.

\begin{thm}\label{Orc(M(A)leqD(A)+1} Let $A$ be a $\sigma$-unital C$^*$-algebra such
 that $X^1$ is closed whenever $X$ is a closed subset of
$\Prim(A)$. If $D(A)\ge 1$ then
$$\Orc(M(A))\le D(A)+1\le \Orc(A)+2.$$
\end{thm}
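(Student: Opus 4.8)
The plan is to deduce the result from Lemma~\ref{Orc(M(A)leqn} by taking $n=D(A)+1$, so the core task is to show that for every Glimm ideal $G$ of $A$ and every pair $R,S\in\Prim(M(A)/H_G)$ one has $d_{M(A)}(R,S)\le D(A)+1$; the inequality $D(A)+1\le\Orc(A)+2$ is then immediate from the general comparison $D(A)\le\Orc(A)+1$ recalled just before the statement (and we may assume $D(A)<\infty$, since otherwise there is nothing to prove, $\Orc(A)$ being infinite too). First I would recall from Section~2 that $\Prim(M(A)/H_G)=\Prim(M(A))\cap\{P:P\supseteq H_G\}$ and that, via the homeomorphism $\iota:\beta\Glimm(A)\to\Glimm(M(A))$, the Glimm ideals $H_G$ of $M(A)$ with $G\in\iota(\Glimm(A))$ form a dense subset of $\Glimm(M(A))$; more importantly, every $\tilde P$ for $P\in\Prim(A)$ lies over $H_{\phi_A(P)}$. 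So the first reduction is to pass, using Proposition~\ref{Prop-Glimm-normal} (with $n=2$, applied to neighbourhoods of $R$ and $S$), from an arbitrary $H\in\Glimm(M(A))$ and $R,S\in\Prim(M(A)/H)$ to some $K\in\Glimm(A)$ and $R',S'\in\Prim(M(A)/H_K)$ with $R'$ near $R$ and $S'$ near $S$; since $d_{M(A)}$ is a lower-semicontinuous-type quantity here — more precisely $d_{M(A)}(R,S)\le d_{M(A)}(R',S')+2$ when $R'\sim R$ and $S'\sim S$, and inseparable points are found in arbitrarily small neighbourhoods — it suffices to bound $d_{M(A)}$ on fibres over $H_K$ with $K\in\Glimm(A)$. (Here the hypothesis ``$X^1$ closed'' combined with Proposition~\ref{phi-closed} gives us that $\phi_A$ is closed and every Glimm class of $A$ is $\sim$-connected, which is what makes the fibre structure tractable.)

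The heart of the argument is then the following: given $K\in\Glimm(A)$ and $R',S'\in\Prim(M(A)/H_K)$, I want a $*$-walk in $\Sub(A)$ of length at most $D(A)$ joining two elements of $\Sub(A)$ lying over $K$, and then to ``lift'' it through the canonical map to a $\sim$-walk in $\Prim(M(A))$ of length at most $D(A)+1$ from $R'$ to $S'$. The key bridge is that a primitive ideal $R'$ of $M(A)/H_K$ arises as a limit (in $\Prim(M(A))$) of a net $\tilde P_\alpha$ with $P_\alpha\in\Prim(A)$, $\phi_A(P_\alpha)=K$; passing through the $\tau_s$-compactness of $\Primal'(M(A))$ (or of $\mathrm{Id}(M(A))$ in Fell's topology) one extracts that $R'$ contains some $I\in\Sub(M(A))$ which is a $\tau_s$-limit of $\tilde P_\alpha$, hence $I\cap A\in\Sub(A)$ and $I\cap A\supseteq K$. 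So to each of $R',S'$ I associate elements $I_R,I_S\in\Sub(A)$ lying over $K$. Because the fibre $\{J\in\Sub(A):J\supseteq K\}$ is $*$-connected (this is the $\Sub$-analogue, recalled just before the statement, of the $\sim$-connectedness of the Glimm class of $K$ in $\Prim(A)$, which we have from Proposition~\ref{phi-closed}), there is a $*$-walk $I_R=I_0*I_1*\cdots*I_m=I_S$ in $\Sub(A)$ with $m\le D(A)$. Each edge $I_{j-1}*I_j$ means $I_{j-1}+I_j\ne A$, so there is $P_j\in\Prim(A)$ with $P_j\supseteq I_{j-1}+I_j$; then in $\Prim(M(A))$ one checks $\widetilde{P_j}\sim\widetilde{P_{j+1}}$ (they share a point in the closure of the common limit set, by the primal description of $I_j$) and $R'\sim\widetilde{P_1}$, $\widetilde{P_m}\sim S'$ (because $R'$ resp.\ $S'$ is a limit of $\tilde P_\alpha$ with $P_\alpha$ approaching $I_R\subseteq P_1$ resp.\ $I_S\subseteq P_m$). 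Counting, this gives a $\sim$-walk of length at most $m+1\le D(A)+1$ from $R'$ to $S'$ in $\Prim(M(A))$.

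Putting the two reductions together, $d_{M(A)}(R,S)\le (D(A)+1)+2$ at first pass, but a more careful accounting — absorbing the two extra $\sim$-steps from Proposition~\ref{Prop-Glimm-normal} into the endpoints of the walk, exactly as in the proofs of Theorems~\ref{M(A)and(n+1)} and~\ref{M(A)and(n+3/2)} where $Q^{(i)}\sim T^{(i)}_1$ is obtained ``for free'' — shows $d_{M(A)}(R,S)\le D(A)+1$ for all $R,S$ in a common fibre of $\phi_{M(A)}$, which is precisely the hypothesis of Lemma~\ref{Orc(M(A)leqn} with $n=D(A)+1$ (legitimate since $D(A)\ge1$). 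Hence $\Orc(M(A))\le D(A)+1$, and the remaining inequality $D(A)+1\le\Orc(A)+2$ follows from $D(A)\le\Orc(A)+1$. The step I expect to be the main obstacle is the lifting of the $*$-walk: verifying that the edge relation $I_{j-1}+I_j\ne A$ really does produce an inseparability $\widetilde{P_j}\sim\widetilde{P_{j+1}}$ in $\Prim(M(A))$ (as opposed to merely in $\Prim(A)$) requires using the primal/limit-of-nets characterisation of the $I_j$ together with Lemma~\ref{disjoint-prim} to transport inseparability through the map $P\mapsto\tilde P$, and carefully controlling the two ``interface'' steps at $R'$ and at $S'$ so that they do not cost more than accounted for — this book-keeping, rather than any single hard inequality, is where the care is needed.
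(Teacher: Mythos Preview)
Your overall plan---reduce to Lemma~\ref{Orc(M(A)leqn} with $n=D(A)+1$ and then bound $d_{M(A)}$ on each fibre $\Prim(M(A)/H_G)$---matches the paper's, but the execution diverges. The paper argues by \emph{contradiction via chains}: assuming $d_{M(A)}(Q,R)\ge k>D(A)+1$ for some $Q,R$ over $H_G$, it builds a chain $X_1,\ldots,X_k$ on $\Prim(M(A))$, pulls it back to $X_1',\ldots,X_k'$ on $\Prim(A)$, and then (using closedness of $\phi_A$ to find a genuine $T\in\Prim(A)$ with $\phi_A(T)=G$ in the closure of the pullback of $X_1\setminus X_2$, followed by $\tau_s$-compactness of $\Min$-$\Primal$ below it) produces $I,J\in\Sub(A)$ over $G$ with $\Prim(A/I)\subseteq X_1'$ and $\Prim(A/J)\subseteq X_k'$. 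The chain structure then forces any $*$-walk from $I$ to $J$ to have length $\ge k-1>D(A)$, a contradiction. Your approach is the ``dual'': construct an explicit $\sim$-walk in $\Prim(M(A))$ by lifting a $*$-walk in $\Sub(A)$.

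Your direct approach is plausible but has real gaps. First, the reduction via Proposition~\ref{Prop-Glimm-normal} is unnecessary: Lemma~\ref{Orc(M(A)leqn} already restricts to $H_G$ with $G\in\Glimm(A)$, so the ``$+2$'' you later have to absorb never arises. Second, your claim that the approximating net can be taken with $\phi_A(P_\alpha)=K$ is wrong; one only gets $\phi_A(P_\alpha)\to K$, and deducing $I_R\supseteq K$ from this requires either closedness of $\phi_A$ (as the paper uses) or upper semi-continuity of $G\mapsto\Vert a+G\Vert$ on $\Glimm(A)$. Third, passing through $\Sub(M(A))$ and then intersecting with $A$ is not obviously legitimate; the correct move (and the paper's) is to take minimal primal ideals $I_\alpha$ of $A$ below $P_\alpha$ and pass to a $\tau_s$-limit. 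Finally, and most seriously, the endpoint step $R'\sim\tilde P_1$ is not a triviality: you need that for any neighbourhoods $U\ni R'$ and $V\ni\tilde P_1$, primality of $I_\alpha$ together with $I_\alpha\to I_R\subseteq P_1$ ($\tau_s$) eventually yields $Q_\alpha\supseteq I_\alpha$ with $\tilde Q_\alpha\in V$, and then a net in $\Prim(A)$ converging to both $P_\alpha$ and $Q_\alpha$ lands in $U\cap V$. This argument can be made to work, but it is precisely the delicate lifting the paper's chain-contradiction argument is designed to avoid.
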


\begin{proof} Suppose that $D(A)\geq1$. Since $D(A)\le \Orc(A)+1$, it suffices to show that
$\Orc(M(A))\le D(A)+1$. Without loss of generality we may assume
that $D(A)<\infty$ and hence $\Orc(A)<\infty$. It follows from
Proposition~\ref{phi-closed} that $\phi_A$ is a closed map and that
every Glimm class in $\Prim(A)$ is $\sim$-connected.

Towards a contradiction, suppose that there exist $G \in \Glimm(A)$
and $Q, R \in \Prim(M(A)/H_G)$ such that
$D(A)+1<d_{M(A)}(Q,R)\leq\infty$. Then there exists $k\in\NN$ such
that $D(A)+1<k\leq d_{M(A)}(Q,R)$. Since $D(A)\geq 1$, we have
$k\geq3$.
By \cite[Lemma 2.4]{Som} or Lemma~\ref{admissible-chain} (applied to
$M(A)$), there is a chain $X_1, \ldots, X_k$ on $\Prim(M(A))$ with
$Q\in X_1\setminus X_2$ and $R\in X_k\setminus X_{k-1}$. Set
$W=X_1\setminus X_2$ and let $V=\{ P\in \Prim(A): \tilde P\in W\}$.
Then $W$ is an open subset of $\Prim(M(A))$ containing $Q$ and
$\tilde V=\{ \tilde P: P\in V\}$ is a dense open subset of $W$ by
the density of the open subset $\Prim(A)^{\sim}$ in $\Prim(M(A))$.
We claim that $\overline V$ (the closure of $V$ in $\Prim(A)$) meets
$\Prim(A/G)$. To see this, let $(Q_{\alpha})$ be a net in $V$ such
that $\tilde Q_{\alpha}\to Q$. Using the canonical homeomorphism
$\iota: \beta\Glimm(A)\to\Glimm(M(A))$ and the continuity of
$\phi_{M(A)}$, we have
$$\iota(\phi_{A}(Q_{\alpha}))=\phi_{M(A)}(\tilde Q_{\alpha})
\to \phi_{M(A)}(Q)=H_G=\iota(G).$$  Hence $\phi_A(Q_{\alpha})\to G$.
Thus $G$ belongs to the closure of $\phi_A(V)$ in $\Glimm(A)$. Since
$\phi_A$ is a closed map this implies that there exists $T\in
\overline V$ such that $\phi_A(T)=G$.

Now let $(P_{\alpha})$ be a net in $V$ such that $P_{\alpha}\to T$.
For each $\alpha$, let $I_\alpha\in \Min$-$\Primal(A)$ with
$I_\alpha\subseteq P_{\alpha}$.
By passing to a subnet, if necessary, we may assume that
$(I_{\alpha})$ is $\tau_s$-convergent in ${\rm Id}(A)$, with limit
$I$ say. For $a\in I$,
$$ 0=\Vert a+I\Vert = \lim\Vert a+I_{\alpha}\Vert \geq \liminf\Vert
a+P_{\alpha}\Vert\geq\Vert a+T\Vert.$$
 Thus $I\subseteq T$ and so $I\ne A$
and $I\in \Sub(A)$. Since each proper closed primal ideal of $A$
contains a unique Glimm ideal \cite[Lemma 2.2]{ASo}, and $T\supseteq
I$ and $T\supseteq G$, it follows that $I\supseteq G$.

Set $X_i'=\{P\in \Prim(A): \tilde P\in X_i\}$ $(1\le i\le k)$. Then
$X'_1, \ldots, X'_k$ is a chain on $\Prim(A)$. We claim that
$\Prim(A/I)\subseteq X_1'$. To see this, let $T'\in \Prim(A/I)$.
Suppose that $T'$ does not lie in the closed set $X'_1$ and set
$M=\ker X'_1$. Then $T'\not\supseteq M$ and so $I\not\supseteq M$.
Since $I_{\alpha}\to I$ ($\tau_s$),  eventually
$I_{\alpha}\not\supseteq M$ and so there exists $Q_{\alpha}\in
\Prim(A/I_{\alpha})$ such that $Q_{\alpha}\not\supseteq M$. Hence
$Q_{\alpha}\notin X'_1$. On the other hand  $P_{\alpha}\in
\Prim(A/I_{\alpha})$ and $P_{\alpha}\in V\subseteq X'_1$. Thus
$P_{\alpha}$ and $Q_{\alpha}$ are in disjoint open subsets of
$\Prim(A)$. This contradicts the primality of $I_{\alpha}$, so $T'$
must belong to $X'_1$.

In the same way, using $R\in X_k$, we may obtain $J\in \Sub(A)$ with
$J\supseteq G$ such that $\Prim(A/J)\subseteq X_k'$. Since
$\Prim(A/G)$ is a $\sim$-connected subset of $\Prim(A)$, $I$ and $J$
are in the same $*$-component of $\Sub(A)$. Let
$I=I_0*I_1*\ldots*I_{n-1}
*I_n=J$ be any walk in $\Sub(A)$ from $I$ to $J$, of length $n$.
Then $\Prim(A/I_1)$ meets $X_1'$ and hence $\Prim(A/I_1)\subseteq
X_1'\cup X_2'$, since $\Prim(A/I_1)$ is a limit set and $(X'_1\cup
X'_2)\setminus X'_3$ and $(X'_3\cup\ldots X'_k)\setminus X'_2$ are
disjoint open subsets of $\Prim(A)$. By induction,
$\Prim(A/I_i)\subseteq X_1'\cup X_2'\cup\ldots\cup X_{i+1}'$ for
$1\le i\le k-1$. Thus the least $i$ such that $\Prim(A/I_i)$ can
meet $X_k'$ is $i=k-2$ (recall that $X'_{k-j}\cap X_k'$ is empty for
$2\le j\le k-1$). But $\Prim(A/I_{n-1})$ does meet $\Prim(A/J)$ (and
hence $X'_k$) and thus $n-1 \geq k-2$. Hence $d^*(I,J)\ge k-1$ and
so $D(A)\ge k-1>D(A)$, a contradiction.

We have shown that for all $G\in \Glimm(A)$ and $R, S\in
\Prim((M(A)/H_G)$, $d_{M(A)}(R, S)\le D(A)+1$. It follows from
Lemma~\ref{Orc(M(A)leqn} that $\Orc(M(A))\leq D(A)+1$.
\end{proof}

\bigskip

 If $A$ is a $\sigma$-unital $C^*$-algebra with
$D(A)=0$ (i.e. $A$ quasi-standard) then $\Orc(M(A))\leq2$
\cite[Theorem 3.4]{AKSI}. Combining this with
Theorem~\ref{Orc(M(A)leqD(A)+1}, we see that if $A$ is a
$\sigma$-unital C$^*$-algebra such that $X^1$ is closed whenever $X$
is a closed subset of $\Prim(A)$ then $\Orc(M(A))\le \Orc(A)+2$.

\begin{cor}
Let $A$ be a $\sigma$-unital $C^*$-algebra such that $X^1$ is closed
whenever $X$ is a closed subset of $\Prim(A)$. Then
$$\Orc(A)\leq \Orc(M(A))\leq \Orc(A)+2.$$
\end{cor}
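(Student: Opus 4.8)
The plan is to obtain the two inequalities separately: the left-hand one directly from a theorem already proved, and the right-hand one by a short case split on the value of $D(A)$.

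First I would observe that the lower bound $\Orc(A)\leq\Orc(M(A))$ is precisely the content of Theorem~\ref{Orc(M(A))geqOrc(A)}. The standing hypotheses of the corollary --- that $A$ is $\sigma$-unital and that $X^1$ is closed whenever $X$ is a closed subset of $\Prim(A)$ --- are exactly the hypotheses of that theorem, so no further argument is needed for this half.

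For the upper bound I would distinguish two cases according to whether $D(A)\geq1$ or $D(A)=0$. If $D(A)\geq1$, then Theorem~\ref{Orc(M(A)leqD(A)+1} applies and gives $\Orc(M(A))\leq D(A)+1$; combining this with the elementary inequality $D(A)\leq\Orc(A)+1$ established earlier in this section yields $\Orc(M(A))\leq\Orc(A)+2$. If instead $D(A)=0$, then by Lemma~\ref{D(A)=0} the algebra $A$ is quasi-standard, so $\Orc(A)=1$, and by \cite[Theorem 3.4]{AKSI} we have $\Orc(M(A))\leq2$; hence $\Orc(M(A))\leq2\leq\Orc(A)+2$ in this case as well. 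Putting the two cases together gives $\Orc(M(A))\leq\Orc(A)+2$ in all cases, which together with the first paragraph completes the proof.

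There is essentially no obstacle here: the corollary merely packages Theorem~\ref{Orc(M(A))geqOrc(A)} together with Theorem~\ref{Orc(M(A)leqD(A)+1} and, for the degenerate value $D(A)=0$ which is explicitly excluded from the hypotheses of the latter, the separate input \cite[Theorem 3.4]{AKSI} accessed through the quasi-standard characterization of Lemma~\ref{D(A)=0}. The only point needing a moment's care is that Theorem~\ref{Orc(M(A)leqD(A)+1} is stated only for $D(A)\geq1$, which is precisely why the quasi-standard case must be handled on its own; indeed the upper bound has already been noted in the paragraph immediately preceding the corollary.
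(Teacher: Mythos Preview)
Your proposal is correct and follows essentially the same approach as the paper: the lower bound comes directly from Theorem~\ref{Orc(M(A))geqOrc(A)}, and the upper bound is obtained by the case split on $D(A)\geq 1$ (using Theorem~\ref{Orc(M(A)leqD(A)+1}) versus $D(A)=0$ (using Lemma~\ref{D(A)=0} and \cite[Theorem 3.4]{AKSI}). Indeed, the paper's own proof simply refers back to the paragraph preceding the corollary, which records exactly this argument.
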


\begin{proof}
By Theorem~\ref{Orc(M(A))geqOrc(A)}, $\Orc(A)\leq \Orc(M(A))$. The
second inequality has been noted above.
\end{proof}

\bigskip

 In the second part of this section, we shall consider
$C^*$-algebras with the following properties:

(1) $X^1$ is closed whenever $X$ is a closed subset of $\Prim(A)$;

(2) there is a dense subset $S\subseteq \Prim(A)$ such that: (a)
each $P\in S$ is a Glimm ideal; (b) each $P\in S$ is a maximal
ideal; (c) $A/P$ is non-unital for all $P\in S$.

\medskip

\noindent Property (1) has been discussed earlier, at the beginning
of Section $4$. Property (2) is a considerable restriction, but it
is satisfied by the $C^*$-algebras of many locally compact groups
including $SL(2,\R)$, $SL(2,\C)$, the motion groups $G_N$ (see
below) and all non-abelian, connected, simply connected nilpotent
Lie groups.

If $\Prim(A)$ is a $T_1$-space, $\Orc(A)<\infty$, and $A$ has
property (1) then every Glimm class is $\sim$-connected by
Proposition~\ref{phi-closed}, so every separated point of $\Prim(A)$
automatically satisfies (a) and (b) of property (2).
Hence if $A$ is a separable unital C$^*$-algebra with $\Prim(A)$ a
$T_1$-space and $\Orc(A)<\infty$ then its stabilization $A\otimes
\mathcal{K}$ satisfies properties (1) and (2) (where $\mathcal{K}$
is the $C^*$-algebra of compact linear operators on a separable
Hilbert space of infinite dimension). Also, if $A$ is a separable,
liminal $C$*-algebra such that $\Orc(A)<\infty$ and the set $S_1=\{P
\in \Prim(A): A/P \quad\mbox{is non-unital}\}$ is dense in
$\Prim(A)$, then the assumption of property (1) automatically leads
to property (2). To see this note that $S_1$ is a $G_\delta$ subset
of $\Prim(A)$ (see the proof of \cite[Proposition 12]{Del}). On the
other hand, the set $S_2$ of separated points of $\Prim(A)$ is also
a dense $G_\delta$ subset \cite[3.9.4(b)]{Dix}. As above, each $P$
in $S_2$ satisfies (a) and (b). Since $\Prim(A)$ is a Baire space,
the set $S=S_1 \cap S_2$ has the required properties. Incidentally,
the density of $S_1$ always forces $Z(A)=\{0\}$ and the converse is
true when $A$ is liminal \cite[Proposition 12]{Del}.

It follows from property (2) that every $P\in S$ is a separated
point of $\Prim(A)$.
If a net converges in $\Prim(A)$  to a separated point $Q$ then
every cluster point must contain $Q$ and hence the net converges to
$Q$ in $\tau_s$ \cite[Theorem 2.1]{Fe}. It follows that $S$ is
$\tau_s$-dense in the set of separated points of $\Prim(A)$. Thus,
if $A$ is also separable, every ideal in $\Sub(A)$ is the
$\tau_s$-limit of a sequence in $S$ (cf. the discussion preceding
Theorem~\ref{M(A)and(n+1)}). Now suppose that $I\in \Sub(A)$, that
$(P_n)$ is a sequence in $S$ such that $P_n \to I$ ($\tau_s$) and
that a primitive ideal $P$ lies in the closure of $\{P_n:n\geq1\}$
but $P\neq P_n$ for all $n$. Let $a\in I$ and $\epsilon>0$. Since
$P_n \to I$ ($\tau_s$), there exists $N\in\NN$ such that $\Vert
a+P_n\Vert<\epsilon$ for all $n> N$. As $P$ does not belong to the
closed set $\{P_1,\ldots,P_N\}$ it belongs to the closure of the set
$\{P_n:n>N\}$. By lower semi-continuity of norm functions on
$\Prim(A)$, $\Vert a+P\Vert \leq \epsilon$. Since $\epsilon$ was
arbitrary, we obtain that $a\in P$ and hence that $P\supseteq I$.
We
shall use this fact several times in the proof of
Theorem~\ref{Orc(M(A))=D(A)+1}.

We now show that if $A=C^*(G_N)$ ($N\geq2$) then $A$ satisfies both
properties (1) and (2). Property (1) has been previously observed as
a consequence of Lemma~\ref{X^1closed}. The set $S=\{\ker \pi:
\pi\in\mathcal{U}_N\}$ is a dense open subset of separated points of
$\Prim(A)$ which are also Glimm ideals (see Section $3$). For $P\in
S$, $A/P$ is $^*$-isomorphic to the non-unital, simple $C^*$-algebra
$\mathcal{K}$ and so $S$ satisfies 2(b) and 2(c).

\bigskip

 Next we need a technical lemma.

\begin{lemma}\label{lemma-for-(1)and(2)} Let $A$ be a separable C$^*$-algebra with $\Orc(A)<\infty$ and
with properties (1) and (2) above. Let $G\in \Glimm(A)$ and let
$I\in \Sub(A)$ with $G\subseteq I$. Set $X_G=\{P\in \Prim(A):
P\supseteq G\}$. Let $V$ be an open subset of $\Prim(A)\setminus
X_G$ and suppose that there is a sequence $(P_n)_{n\ge 1}$ in $
S\cap V$ such that $P_n\to I$ $(\tau_s)$. Then there exists $Q\in
\Prim(M(A))$ with $Q\supseteq H_G$ such that
\begin{enumerate}
\item[{\rm (i)}] $Q$ lies in the closure of $\{ \tilde P_n:n\ge
1\}$;
\item[{\rm (ii)}] if $(Q_{\alpha})$ is any net in $\Prim(A)$
with $\tilde Q_{\alpha}\to Q$ then eventually $Q_{\alpha}\in V$;
\item[{\rm (iii)}] $\{Q\}^1$ lies in the closure of $\tilde V=\{\tilde
P:P\in V\}$ in $\Prim(M(A))$.
\end{enumerate}
\end{lemma}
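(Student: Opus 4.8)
The plan is to construct the candidate primitive ideal $Q$ of $M(A)$ as a $\tau_s$-type limit point of the net $(\tilde P_n)$ inside $\Prim(M(A))$, exploiting that each $P_n$ is a separated point, and then verify (i)--(iii) in turn. The key idea mirrors the proof of Theorem~\ref{Orc(M(A)leqD(A)+1}: the sequence $(\tilde P_n)$ sits inside the open set $\tilde V$, and since $\Prim(M(A))$ is compact (indeed $M(A)$ is unital), $(\tilde P_n)$ has a cluster point. First I would pass to a subnet $(\tilde P_{n(\gamma)})$ that converges in $\Prim(M(A))$ to some $Q$; since $P_n \to I$ $(\tau_s)$ and $G \subseteq I$, the continuity of $\phi_{M(A)}$ together with the homeomorphism $\iota:\beta\Glimm(A)\to\Glimm(M(A))$ forces $\phi_{M(A)}(\tilde P_{n(\gamma)}) = \iota(\phi_A(P_{n(\gamma)})) \to \iota(G) = H_G$, because $\phi_A(P_n)\to\phi_A(I)$'s Glimm class, which contains $G$; hence $\phi_{M(A)}(Q)=H_G$, i.e.\ $Q\supseteq H_G$. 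This is (i), once one notes that $Q$, being a cluster point of $(\tilde P_n)$, lies in the closure of $\{\tilde P_n:n\ge1\}$.

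For (ii), suppose $(Q_\alpha)$ is a net in $\Prim(A)$ with $\tilde Q_\alpha \to Q$ in $\Prim(M(A))$. I want to show $Q_\alpha$ is eventually in the open set $V$. The point is to translate the convergence back down to $A$ and use that the $P_n$ are separated points with $P_n\to I$ $(\tau_s)$. Choose $a \in A$ with $\|a+P\|$ small for $P$ near $I$ (using $a\in I$) --- more precisely, since $\Prim(A)\setminus X_G \supseteq V$ is open and $Q$ is built from the $\tilde P_n$, and since convergence to the separated points $\tilde P_n$ in $\Prim(M(A))$ is automatically $\tau_s$-convergence by \cite[Theorem 2.1]{Fe}, one deduces that $\tilde Q_\alpha \to Q$ in $\tau_s$ forces $Q \supseteq$ (the $\tau_s$-limit built from $\tilde P_n$), and then pulling back, $Q_\alpha$ must eventually enter any preassigned open neighbourhood structure that $V$ provides. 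The cleanest route: pick a function / element of $M(A)$ supported (up to $\varepsilon$) on $\tilde V$ and nowhere else, witnessing that $Q\in\overline{\tilde V}$ in a $\tau_s$-robust way; then $\tilde Q_\alpha\to Q$ in $\tau_s$ forces the norm of that element at $\tilde Q_\alpha$ to be eventually positive, hence $\tilde Q_\alpha\in\tilde V$ eventually, i.e.\ $Q_\alpha\in V$ eventually. This uses property (2) --- that $S\cap V$ consists of separated points --- in an essential way, via the Fell criterion that nets converging to separated points converge in $\tau_s$.

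For (iii), I would take an arbitrary $R \in \{Q\}^1$, so $R\sim Q$ or $R=Q$, and show $R$ lies in $\overline{\tilde V}$. The case $R = Q$ is immediate from (i) since $\{\tilde P_n\}\subseteq \tilde V$. If $R\sim Q$ with $R\ne Q$, then every neighbourhood of $R$ meets every neighbourhood of $Q$; since $Q\in\overline{\{\tilde P_n\}}\subseteq\overline{\tilde V}$ and $\tilde V$ is open, any neighbourhood $W$ of $R$ meets $\tilde V$ (because $W$ is a neighbourhood that cannot be separated from one containing $Q$, and $Q$ is in the closure of the open set $\tilde V$). Hence $R\in\overline{\tilde V}$. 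Here I would be careful to use the density of the open set $\Prim(A)^{\sim}$ in $\Prim(M(A))$ so that "$Q\in\overline{\tilde V}$ with $\tilde V$ open" really does force the inseparability partner $R$ into $\overline{\tilde V}$; the relevant fact is that for an open set $\tilde V$, $\overline{\tilde V}\supseteq\{x: x\sim y \text{ for some } y\in\tilde V\}$, applied with $y$ ranging over points of $\tilde V$ close to $Q$.

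**The main obstacle** I anticipate is part (ii): extracting, from the purely topological convergence $\tilde Q_\alpha\to Q$ in $\Prim(M(A))$, the conclusion that $Q_\alpha$ lands in the prescribed open set $V\subseteq\Prim(A)$. The danger is that $Q$ could be approached from outside $\tilde V$ as well. The resolution must be that $Q$ was manufactured so that it is \emph{only} approachable through $\tilde V$ --- and this is exactly where the hypothesis that the $P_n$ are separated points (property (2)(a),(b) giving $P_n$ separated), combined with $P_n\to I$ $(\tau_s)$ and $V$ open, has to be leveraged: I would show $Q$ has a neighbourhood base in $\Prim(M(A))$ contained in $\tilde V$, by producing, via Urysohn-type arguments in the unital algebra $M(A)$ and the lower/upper semicontinuity of norm functions, an element of $M(A)$ that is norm-$1$ at $Q$ and vanishes off $\tilde V$. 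Establishing that such an element exists --- equivalently that $\tilde V$ itself is (or contains) a neighbourhood of $Q$ --- is the delicate heart of the lemma and will likely require invoking that each $\tilde P_n$ is a separated point of $\Prim(M(A))$ (inherited from $P_n$ being separated in $\Prim(A)$) so that the sequence $(\tilde P_n)$ behaves well and $Q$ inherits a sufficiently rigid neighbourhood structure.
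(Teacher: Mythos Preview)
Your overall shape is right and your handling of (iii) via (ii) matches the paper (take a common convergent net to $Q$ and $T$ by density of $\Prim(A)^\sim$, then apply (ii)). The serious gap is in (ii), and it stems from a wrong order of operations. You propose to pick an arbitrary cluster point $Q$ of $(\tilde P_n)$ and \emph{then} manufacture an element of $M(A)$ that is norm-$1$ at $Q$ and vanishes off $\tilde V$. But for a generic cluster point there is no reason such an element exists: $Q$ could equally well lie in the closure of $(\Prim(A)\setminus V)^\sim$, and then nets from outside $V$ could converge to it. The paper does the construction in the opposite order: it \emph{first} builds the element, and \emph{then} selects $Q$ as a cluster point lying in the compact set where that element has norm $\geq 1$.

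Concretely, the paper works on $\Glimm(A)$ rather than $\Prim(A)$. Setting $W=\Prim(A)\setminus V$ and $U=\Glimm(A)\setminus\phi_A(W)$, closedness of $\phi_A$ (from property~(1), $\Orc(A)<\infty$ and Proposition~\ref{phi-closed}) makes $U$ open, and property~(2)(a) gives $P_n=\phi_A(P_n)\in U$. One then takes a continuous $f$ on $\Glimm(A)$ with cozero set $U$ and invokes \cite[Theorem 2.2]{AKSI} to obtain $b\in M(A)$ with $(1-b)\in\tilde K$ for every $K\notin U$ and $\Vert(1-b)+\tilde K\Vert=1$ for $K\in U$ with $A/K$ non-unital. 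This last clause is exactly where property~(2)(c) enters, and you never mention it; separatedness of the $P_n$ alone does not give you the required element. Now $\Vert(1-b)+\tilde P_n\Vert=1$ for all $n$, so a cluster point $Q$ can be chosen in the compact set $N=\{R:\Vert(1-b)+R\Vert\geq1\}$, and then $Y=\{R:\Vert(1-b)+R\Vert>0\}$ is an open neighbourhood of $Q$ whose pullback to $\Prim(A)$ lies in $\phi_A^{-1}(U)\subseteq V$. That is what makes (ii) work.
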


\begin{proof} Set $W=\Prim(A)\setminus V$ and
$U=\Glimm(A)\setminus\phi_A(W)$. By Proposition~\ref{phi-closed},
$\phi_A$ is closed and so $U$ is an open subset of $\Glimm(A)$ and
$G\notin U$. On the other hand, $P_n=\phi_A(P_n)\in U$ for each $n$
since $\{P_n\}$ is a Glimm class. Also, for any $P\in\Prim(A/I)$,
$P_n\to P$ in $\Prim(A)$ and so $P_n=\phi_A(P_n)\to\phi(P)= G$ in
$\Glimm(A)$. Hence $G$ lies in the boundary of the open set $U$.
Since $A$ is separable, it follows from \cite[Lemma 3.9]{ASomSS}
that $U$ is the cozero set of a continuous real-valued function $f$
on $\Glimm(A)$. Replacing $f$ by $|f|/(1+|f|)$, we may assume that
$0\le f\le 1$.

By \cite[Theorem 2.2 (iii),(iv)]{AKSI}, there exists an element
$b\in M(A)$ with $0\leq b\leq 1$ such that $(1-b)\in \tilde K$ for
all $K\in \Glimm(A)\setminus U$ and $\Vert (1-b)+\tilde K\Vert=1$
for all $K\in U$ such that $A/K$ is non-unital. In particular,
$\Vert (1-b)+\tilde P_n\Vert=1$ for all $n\ge 1$. The set
$$N=\{R\in
\Prim(M(A)):\Vert (1-b)+R\Vert\ge 1\}$$
 is compact and hence the
sequence $(\tilde P_n)$ in $N$ has a convergent subnet $(\tilde
P_{n_{\alpha}})$ with a limit $Q\in N$. Since $\Glimm(M(A))$ is
Hausdorff,
$$\phi_{M(A)}(Q) =\lim\phi_{M(A)} (\tilde P_{n_{\alpha}})
=\lim\iota(\phi_A(P_{n_{\alpha}}))= \iota(G)=H_G$$ and so
$Q\supseteq H_G$.

Set $Y=\{ R\in \Prim(M(A)): \Vert (1-b)+R\Vert>0\}$. Then $Y$ is a
neighbourhood of $Q$ in $\Prim(M(A))$. If $P\in \Prim(A)$ and
$K:=\phi_A(P)\notin U$ then $1-b\in \tilde{K}\subseteq\tilde{P}$ and
so $\tilde{P}\notin Y$. For (ii), suppose that
$\tilde{Q}_{\alpha}\to Q$. Then eventually $\tilde{Q}_{\alpha}\in Y$
and so $Q_{\alpha}\in \phi_A^{-1}(U)\subseteq V$.

Finally, suppose that $T\in \Prim(M(A))$ with $T\sim Q$. By the
density of $\{\tilde{P}:P\in\Prim(A)\}$ in $\Prim(M(A))$ there
exists a net $(Q_{\alpha})$ in $\Prim(A)$ such that $\tilde
Q_{\alpha}\to Q, T$. By (ii), eventually $Q_{\alpha}\in V$. Hence
$T$ lies in the closure of $\tilde V$ in $\Prim(M(A))$.
\end{proof}


\bigskip

 We come now to the main theorem of this section. In the course of
this, we shall need the fact that if $A$ is a $C^*$-algebra and $G$
is a $\sigma$-unital Glimm ideal of $A$ then
$\Glimm(A)\setminus\{G\}$ is a normal subspace of $\Glimm(A)$. To
see this, first note that the complete regularity of the Hausdorff
space $\Glimm(A)$ passes to any subspace. Secondly,
$\Glimm(A)\setminus\{G\}$ is the image under $\phi_A$ of the set
$\{P\in\Prim(A): P\not\supseteq G\}$ which is homeomorphic to
$\Prim(G)$ and therefore $\sigma$-compact. Thus
$\Glimm(A)\setminus\{G\}$ is a $\sigma$-compact, completely regular
Hausdorff space and is therefore normal (see \cite[3D]{GJ} or
\cite[Ch.2, Proposition 1.6]{Pears}). If $A$ is separable, this
applies to any $G\in \Glimm(A)$. Incidentally, it is always the case
that $\Glimm(A)\setminus\{G\}$ is canonically homeomorphic to
$\Glimm(G)$ but we have avoided the need to prove that here.

\bigskip

\begin{thm}\label{Orc(M(A))=D(A)+1} Let $A$ be separable C$^*$-algebra having properties (1) and
(2) above, and with $D(A)\ge 1$. Then $\Orc(M(A))= D(A)+1$.
\end{thm}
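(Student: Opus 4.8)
The plan is to prove the two inequalities $\Orc(M(A))\le D(A)+1$ and $\Orc(M(A))\ge D(A)+1$ separately. The first is exactly Theorem~\ref{Orc(M(A)leqD(A)+1}: a separable $C^{*}$-algebra is $\sigma$-unital, property~(1) is the standing hypothesis there on closed sets, and $D(A)\ge 1$. So the work is the reverse inequality. If $D(A)=\infty$ then $\Orc(A)=\infty$ too (recall from this section that $\Orc(A)\le D(A)+1$ and $D(A)\le\Orc(A)+1$, one being infinite exactly when the other is), so Theorem~\ref{Orc(M(A))geqOrc(A)} gives $\Orc(M(A))=\infty$; hence assume $D(A)=n<\infty$. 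Since $n\ge 1$ and the distances in $(\Sub(A),*)$ are integers, fix $I,J\in\Sub(A)$ with $d^{*}(I,J)=n$ and a shortest $*$-walk $I=I_{0}*I_{1}*\cdots*I_{n}=J$. As each proper closed primal ideal contains a unique Glimm ideal \cite[Lemma 2.2]{ASo}, applying this to the $I_{i}$ and to primitive ideals witnessing $I_{i-1}*I_{i}$ shows that all the $I_{i}$ contain a common Glimm ideal $G$; and since the walk is shortest, the closed sets $F_{i}:=\Prim(A/I_{i})\subseteq\Prim(A/G)$ satisfy $F_{i}\cap F_{j}=\emptyset$ for $|i-j|\ge 2$, while $\Prim(A/(I_{i-1}+I_{i}))\ne\emptyset$ for $1\le i\le n$.

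First I would produce one primitive ideal of $M(A)$ at each end, together with a walk of length $n+1$ joining them. For $1\le i\le n$ pick $P^{(i)}\in\Prim(A/(I_{i-1}+I_{i}))$, so that $P^{(i)},P^{(i+1)}\in F_{i}$. Since $A$ is separable and $I_{i}\in\Sub(A)$, a sequence in the dense set $S$ of property~(2) that is $\tau_{s}$-convergent to $I_{i}$ converges in $\Prim(A)$ precisely to the points of $F_{i}$, so $P^{(i)}\sim P^{(i+1)}$ and hence $\tilde P^{(1)}\sim\cdots\sim\tilde P^{(n)}$ in $\Prim(M(A))$. Fix an $S$-sequence $(a_{m})$ with $a_{m}\to I_{0}\ (\tau_{s})$ and an $S$-sequence $(c_{m})$ with $c_{m}\to I_{n}\ (\tau_{s})$, and open sets $V_{0},V_{n}\subseteq\Prim(A)\setminus\Prim(A/G)$ with $V_{0}$ containing a tail of $(a_{m})$ and meeting no $c_{m}$, and $V_{n}$ containing a tail of $(c_{m})$ and meeting no $a_{m}$ (possible since, as $I_{0}\ne I_{n}$, the sets $\overline{\{a_{m}\}}=\{a_{m}\}\cup F_{0}$ and $\overline{\{c_{m}\}}=\{c_{m}\}\cup F_{n}$, and $\Prim(A/G)$, are suitably disjoint closed subsets of $\Prim(A)$; the degenerate cases where $I_{0}$ or $I_{n}$ equals $G$ are handled directly). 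Apply Lemma~\ref{lemma-for-(1)and(2)} to $(G,I_{0},V_{0})$ and to $(G,I_{n},V_{n})$, retaining from its proof the compact sets $N_{0}=\{R'\in\Prim(M(A)):\Vert(1-b_{0})+R'\Vert\ge 1\}$ and $N_{n}$ associated with the corresponding $b_{0},b_{n}\in M(A)$; this gives $Q\in N_{0}$ with $Q\supseteq H_{G}$, lying in the closure of $\{\tilde a_{m}\}$ and with $\{Q\}^{1}\subseteq\overline{\tilde V_{0}}$, and the analogous $R\in N_{n}$. Since $a_{m}\to P^{(1)}$ in $\Prim(A)$, a subnet of $(\tilde a_{m})$ converges in $\Prim(M(A))$ to both $Q$ and $\tilde P^{(1)}$, so $Q\sim\tilde P^{(1)}$; symmetrically $R\sim\tilde P^{(n)}$. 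Hence $Q\sim\tilde P^{(1)}\sim\cdots\sim\tilde P^{(n)}\sim R$, so $d_{M(A)}(Q,R)\le n+1<\infty$ and $Q,R$ lie in a common $\sim$-component of $\Prim(M(A))$.

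The heart of the matter is the bound $d_{M(A)}(Q,R)\ge n+1$; with the previous paragraph it yields $d_{M(A)}(Q,R)=n+1$, so the $\sim$-component of $Q$ has diameter $\ge n+1$. Assume $n\ge 2$ (the case $n=1$, which occurs for $A=C^{*}(G_{3})$, is treated by a variant of these ideas). I would exhibit an admissible chain $Y_{1},\dots,Y_{n+1}$ on $\Prim(M(A))$ with $Q\in Y_{1}\setminus Y_{2}$ and $R\in Y_{n+1}\setminus Y_{n}$, so that $d_{M(A)}(Q,R)\ge n+1$ by \cite[Lemma 2.1]{Som}. Set $Y_{j}=\overline{\tilde F_{j-1}}$ for $2\le j\le n$; by Lemma~\ref{disjoint-prim} ($A$ is $\sigma$-unital) these are pairwise disjoint for indices differing by $\ge 2$, and $\tilde P^{(i)}\in Y_{i}\cap Y_{i+1}$. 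The decisive observation is that, by the construction of $b_{0}$ in the proof of Lemma~\ref{lemma-for-(1)and(2)} (via \cite[Theorem 2.2]{AKSI}), $1-b_{0}$ lies in $\tilde P$ for every $P\supseteq G$; since $F_{i}\subseteq\Prim(A/G)$, the closed sets $Y_{2},\dots,Y_{n}$ together with $\overline{\tilde F_{0}}$ and $\overline{\tilde F_{n}}$ all lie in the closed zero set $Z_{0}=\{R':\Vert(1-b_{0})+R'\Vert=0\}$, which is disjoint from $N_{0}\ni Q$. I would then put $Y_{1}=N_{0}\cup C_{0}$ and $Y_{n+1}=N_{n}\cup C_{n}$, with $C_{0}\supseteq\overline{\tilde F_{0}}$ and $C_{n}\supseteq\overline{\tilde F_{n}}$ closed ``padding'' sets, disjoint from one another and from the $Y_{j}$ of index differing by $\ge 2$, chosen so that $\bigcup_{j}Y_{j}=\Prim(M(A))$ --- a splitting arranged through the continuous map $\phi_{M(A)}$ using the normality of $\beta\,\Glimm(A)=\Glimm(M(A))$ and of $\Glimm(A)\setminus\{G\}$. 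Then $Q\in N_{0}\subseteq Y_{1}$; $Q\notin Y_{2},\dots,Y_{n}$ (they lie in $Z_{0}$); and $Q\notin Y_{n+1}$, because $C_{n}\subseteq Y_{n+1}$ is disjoint from $Y_{1}\ni Q$, while $Q\notin N_{n}$ since $Q$ is a limit of $(\tilde a_{m})$ and each $\tilde a_{m}$ lies in the zero set of $\Vert(1-b_{n})+\cdot\Vert$ (as $a_{m}\notin V_{n}$). Thus $Q\in Y_{1}\setminus Y_{2}$, and symmetrically $R\in Y_{n+1}\setminus Y_{n}$.

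Combining, $\Orc(M(A))\ge n+1=D(A)+1$, and with Theorem~\ref{Orc(M(A)leqD(A)+1} we obtain $\Orc(M(A))=D(A)+1$. The main obstacle is the chain construction: promoting the overlapping family $\{\overline{\tilde F_{i}}\}$ to a genuine covering chain of exact length $n+1$ on $\Prim(M(A))$ and pinning $Q$ and $R$ into the two extreme steps --- in effect the proof of Theorem~\ref{Orc(M(A)leqD(A)+1} run in reverse, with the element $b_{0}$ of Lemma~\ref{lemma-for-(1)and(2)} (isolating the ``new'' points $Q,R$ from the closure of the image of $\Prim(A/G)$ that carries the interior of the chain) doing the essential work. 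An alternative presentation argues by contradiction from a hypothetical walk $Q=T_{0}\sim\cdots\sim T_{m}=R$ with $m\le n$: convert it via \cite[Lemma 2.4]{Som} to a chain on $\Prim(M(A))$, pull it back along $P\mapsto\tilde P$, use parts~(ii) and~(iii) of Lemma~\ref{lemma-for-(1)and(2)} to force $F_{0}$ and $F_{n}$ into its first and last steps, and replay the induction of that proof to extract a $*$-walk from $I$ to $J$ of length $\le n-1$, contradicting $d^{*}(I,J)=n$.
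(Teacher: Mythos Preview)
Your upper bound (via Theorem~\ref{Orc(M(A)leqD(A)+1}) and the handling of $D(A)=\infty$ are fine, and your explicit walk $Q\sim\tilde P^{(1)}\sim\cdots\sim\tilde P^{(n)}\sim R$ is a clean way to place $Q,R$ in the same $\sim$-component. The gap is in the lower bound $d_{M(A)}(Q,R)\ge n+1$, specifically in the chain $Y_1,\ldots,Y_{n+1}$. You set $Y_j=\overline{\tilde F_{j-1}}$ for $2\le j\le n$; these are small closed sets, all contained in the closure of $\widetilde{\Prim(A/G)}$. For $Y_1,\ldots,Y_{n+1}$ to be a chain you need $Y_1\cup Y_{n+1}$ to cover the large open set $\Prim(M(A))\setminus(Y_2\cup\cdots\cup Y_n)$ while $Y_1\cap Y_{n+1}=\emptyset$ (for $n\ge 2$) and $Y_1\cap Y_j=\emptyset$ for $j\ge 3$, $Y_{n+1}\cap Y_j=\emptyset$ for $j\le n-1$. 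That forces a partition of an open set into two disjoint \emph{closed} pieces subject to further disjointness constraints; in effect it demands a specific disconnection of $\Prim(M(A))$ after removing a small set. Your appeal to normality of $\Glimm(M(A))$ and of $\Glimm(A)\setminus\{G\}$ does not produce such a splitting: normality separates disjoint closed sets by disjoint \emph{open} sets, which is the wrong direction here. Pulling the putative chain back to $\Prim(A)$ makes the difficulty concrete: the middle pieces become $F_1,\ldots,F_{n-1}\subseteq\Prim(A/G)$, so the big open set $\Prim(G)$ would have to split into two disjoint closed pieces --- nothing in properties (1) and (2) guarantees this. Your treatment of $n=1$ (``a variant of these ideas'') is also not an argument.

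The paper avoids building a chain of length $n+1$ altogether. For $D(A)=1$ it argues by contradiction using \cite[Corollary 2.4]{AKSI}: $\Orc(M(A))=1$ would force $\Prim(A)$ discrete and hence $D(A)=0$. For $D(A)=2$ it constructs, via careful use of the normality of $\Glimm(A)\setminus\{G\}$ and Lemma~\ref{Lindelof-lemma}, open sets $U,V$ with $\overline U\cap\overline V=\emptyset$; then Lemma~\ref{lemma-for-(1)and(2)} gives $R,T\supseteq H_G$ with $\{R\}^1\subseteq\overline{\tilde U}$, $\{T\}^1\subseteq\overline{\tilde V}$, so $\{R\}^1\cap\{T\}^1=\emptyset$ and $d_{M(A)}(R,T)\ge 3$. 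For $D(A)=k\ge 3$ the key idea is to use the \emph{shorter} distance $d_A(\Prim(A/I),\Prim(A/J))=k-1$ and Lemma~\ref{admissible-chain} to get an admissible chain $X_1,\ldots,X_{k-1}$ on $\Prim(A)$, lift it to $Z_1,\ldots,Z_{k-1}$ on $\Prim(M(A))$, and then use Lemma~\ref{lemma-for-(1)and(2)} to produce $R$ with $\{R\}^1\subseteq Z_1\setminus Z_2$. Since $Z_1\setminus Z_2$ and $\Prim(M(A))\setminus Z_1$ are disjoint open sets, one gets $\{R\}^2\subseteq Z_1$, and by induction $\{R\}^i\subseteq Z_1\cup\cdots\cup Z_{i-1}$ for $2\le i\le k-1$; with the analogous statement for $T$ at the other end, $\{R\}^{k-1}\cap\{T\}^1=\emptyset$ and hence $d_{M(A)}(R,T)\ge k+1$. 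The point is that the chain of length $k-1$ already exists on $\Prim(A)$ and lifts cleanly, and the extra two units of distance are squeezed out of the behaviour of $\{R\}^1$ and $\{T\}^1$ rather than out of two extra chain members.
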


\begin{proof}
By Theorem~\ref{Orc(M(A)leqD(A)+1}, $\Orc(M(A))\le D(A)+1$. Thus it
suffices to show that $\Orc(M(A))\ge D(A)+1$ and for this we may
assume that $\Orc(M(A))<\infty$. Then it follows from
Theorem~\ref{Orc(M(A))geqOrc(A)} that $\Orc(A)<\infty$ (and so
$D(A)<\infty$) and from Proposition~\ref{phi-closed} that $\phi_A$
is a closed map.

We begin by considering the case $D(A)=1$. Suppose that
$\Orc(M(A))=1$. By \cite[Corollary 2.4]{AKSI} and property (2),
$\Prim(A)$ is discrete and so $D(A)=0$, a contradiction. Thus
$\Orc(M(A))\ge 2$, as required.

Now suppose that $D(A)=2$. Let $I, J\in \Sub(A)$ with $d^*(I,J)=2$.
Let $G$ be the unique Glimm ideal such that $I\cap J\supseteq G$ and
set $X_G=\Prim(A/G)$. Set $X=\Prim(A/I)$ and $Y=\Prim(A/J)$. Then
$X$ and $Y$ are disjoint closed subsets of $X_G$. Since $X_G$ is not
a singleton, $G\notin S$ and $X_G\cap S=\emptyset$. Let $(P_n)$ and
$(Q_n)$ be sequences in $S$ with $P_n\to I$ $(\tau_s)$ and $Q_n\to
J$ $(\tau_s)$. Since $I\neq J$ and $\tau_s$ is Hausdorff, we may
assume that the sets $X'= \{ P_n: n\ge 1\}$ and $Y'=\{ Q_n:n\ge
1\})$ are disjoint. By an observation preceding
Lemma~\ref{lemma-for-(1)and(2)}, $X'\cup X$ and $Y'\cup Y$ are
closed subsets of $\Prim(A)$. Since $\phi_A$ is closed, $\phi_A(X')$
and $\phi_A(Y')$ are disjoint closed subsets of the normal space
$\Glimm(A)\setminus \{G\}$. By two applications of normality, there
exist disjoint open subsets $U'$ and $V'$ of $\Glimm(A)\setminus
\{G\}$ such that $X'=\phi_A(X')\subseteq U'$ and
$Y'=\phi_A(Y')\subseteq V'$ and such that the closures of $U'$ and
$V'$ in $\Glimm(A)\setminus\{G\}$ are disjoint. Set
$U''=\phi_A^{-1}(U')$ and $V''=\phi_A^{-1}(V')$.

If $P,Q\in\Prim(A)\setminus X$ and $P\sim Q$ relative to the space
$\Prim(A)\setminus X$ then $P\sim Q$ in $\Prim(A)$ and hence
$P\approx Q$ in $\Prim(A)$. Hence $X'$ and $X_G\setminus X$ are
relatively closed and $\sim$-saturated subsets of the space
$\Prim(A)\setminus X$ which is homeomorphic to $\Prim(I)$. Since
$I$ is a separable C$^*$-algebra, $\Prim(I)$ is $\sigma$-compact and
so it follows from Lemma~\ref{Lindelof-lemma} that there exist
disjoint open sets $U'''$ and $W$ in $\Prim(A)\setminus X$ such that
$X'\subseteq U'''$ and $X_G\setminus X\subseteq W$. Similarly there
exist disjoint open sets $V'''$ and $W'$ in $\Prim(A)\setminus Y$
such that $Y'\subseteq V'''$ and $X_G\setminus Y\subseteq W'$. Set
$U=U''\cap U'''$ and $V=V''\cap V'''$. Then $U$ and $V$ are open
subsets of $\Prim(A)$ with $P_n\in U$, and $Q_n\in V$ for all $n$.
Since $U\subseteq \phi_A^{-1}(U') \subseteq
\phi_A^{-1}(\Glimm(A)\setminus \{G\})$, we have $U\cap
X_G=\emptyset$ and similarly $V\cap X_G=\emptyset$.
 It follows from
Lemma~\ref{lemma-for-(1)and(2)} that there exist $R,T\in \Prim(M(A))$
with $R,T\supseteq H_G$ such that $\{R\}^1$ lies in the closure of
$\tilde U$ in $\Prim(M(A))$ and $\{T\}^1$ lies in the closure of
$\tilde V$.

Suppose that $P\in\overline U\cap \overline V$ (where the closures
are taken in $\Prim(A)$). Then $\phi_A(P) \in \overline{U'}\cap
\overline{V'} =\{G\}$ (where the closures are taken in $\Glimm(A)$)
and so $P\supseteq G$. On the other hand, by the properties of
$U'''$ and $W$, $P\notin X_G\setminus X$ and similarly $P\notin
X_G\setminus Y$. Since $X$ and $Y$ are disjoint subsets of $X_G$, we
have $P\notin X_G$, a contradiction. Thus $\overline U\cap \overline
V=\emptyset$ and hence the closures of $\tilde U$ and $\tilde V$ in
$\Prim(M(A))$ are disjoint by Lemma~\ref{disjoint-prim}. Hence
$\{R\}^1$ and $\{T\}^1$ are disjoint and thus $d_{M(A)}(R,T)\ge 3$.
Since $\Orc(M(A))<\infty$, Glimm classes are $\sim$-connected in
$\Prim(M(A))$ \cite[Corollary 2.7]{Som}, and hence $\Orc(M(A))\ge 3$
as required in this case.

Finally suppose that $D(A)=k\ge 3$. Let $I, J\in \Sub(A)$ with and
$d^*(I,J)=k$ and let $G$ be the unique Glimm ideal such that $I\cap
J\supseteq G$. Set $X=\Prim(A/I)$ and $Y=\Prim(A/J)$. Then $X$ and
$Y$ are disjoint closed subsets of $X_G:=\Prim(A/G)$,
$d_A(X,Y)=k-1$, $G\notin S$ and $X_G\cap S=\emptyset$. The sets $X$
and $Y$ are $\sim$-connected since $I$ and $J$ are primal, and hence
$X\cup Y$ is $\sim$-connected since $d_A(X,Y)<\infty$. By
Lemma~\ref{admissible-chain}, there is an admissible chain $X_1,
\ldots, X_{k-1}$ of closed subsets of $\Prim(A)$ with $X\subseteq
X_1\setminus X_2$ and $Y\subseteq X_{k-1}\setminus X_{k-2}$. For
$1\le i\le k-1$, let $Z_i$ be the closure of $\tilde X_i$ in
$\Prim(M(A))$. Then as in the proof of
Theorem~\ref{Orc(M(A))geqOrc(A)} we have that $Z_1, \ldots, Z_{k-1}$
is a chain on $\Prim(M(A))$ of length $k-1$.

Let $(P_n)$ be a sequence in the dense subset $S$ with $P_n\to I$
$(\tau_s)$. Since $X_1\setminus X_2$ is an open set containing
$\Prim(A/I)$ we may assume that $P_n\in X_1\setminus X_2$ for all
$n$. Set $X'=\{ P_n: n\ge 1\}$.
Then $X'$ and $X_{2}$ are disjoint. As before, $X'$ is a relatively
closed and $\sim$-saturated subset of the space $\Prim(A)\setminus
X$ which is homeomorphic to the $\sigma$-compact space $\Prim(I)$.
Let  $E=((X_1\cap X_G)\setminus X)\cup X_2\cup\ldots\cup X_{k-1}$, a
relatively closed subset of $\Prim(A)\setminus X$ disjoint from
$X'$. So $E=F\cap(\Prim(A)\setminus X))$ for some closed subset $F$
of $\Prim(A)$ disjoint from $X'$. Since $X'$ is a $\sim$-saturated
subset of $\Prim(A)$, $F^1$ is disjoint from $X'$ and is closed in
$\Prim(A)$ by property (1). Now suppose that $P\in\overline{E^1}$
(where both operations are taken relative to the space
$\Prim(A)\setminus X$). Then, in $\Prim(A)$, $P\in
\overline{F^1}=F^1$ and so $P\notin X'$.
 Hence by Lemma~\ref{Lindelof-lemma} there exist
disjoint open subsets $U$ and $V$ of $\Prim(A)\setminus X$ such that
$X'\subseteq U$ and $E=((X_1\cap X_G)\setminus X)\cup
X_2\cup\ldots\cup X_{k-1}\subseteq V$.

By Lemma~\ref{lemma-for-(1)and(2)} there exists $R\in \Prim(M(A))$
such that $R\supseteq H_G$ and $\{R\}^1$ is contained in the closure
of $\tilde{U}$ in $\Prim(M(A))$. But $\overline{U}\subseteq
\Prim(A)\setminus V$ (where $\overline U$ denotes the closure of $U$
in $\Prim(A)$), so $\overline{U}\cap (X_2\cup\ldots\cup X_{k-1})$ is
empty. Hence, by Lemma~\ref{disjoint-prim}, $\{R\}^1$ does not meet
$Z_2\cup\ldots\cup Z_{k-1}$. Similarly there exists $T\in
\Prim(M(A))$ such that $T\supseteq H_G$ and $\{T\}^1$ does not meet
$Z_1\cup\ldots\cup Z_{k-2}$. Since $Z_1, \ldots, Z_{k-1}$ is a chain
on $\Prim(M(A))$, $\{R\}^1\subseteq Z_1\setminus Z_2$. But
$Z_1\setminus Z_2$ and $\Prim(M(A))\setminus Z_1$ are disjoint open
subsets of $\Prim(M(A))$ and so $\{R\}^2\subseteq Z_1$. Proceeding
inductively, we obtain that $\{R\}^i\subseteq Z_{1}\cup\ldots\cup
Z_{i-1}$ for $2\le i\le k-1$.  Thus
$\{R\}^{k-1}\cap\{T\}^1=\emptyset$ and so $d_{M(A)}(R, T)\ge k+1$.
Since $\Orc(M(A)<\infty$, $\Orc(M(A))\ge k+1$ as required.
\end{proof}

\section{The cases $N\equiv1$ and $N\equiv3 \mod 4$.}

In this section, we finally complete the computation of
$K(M(C^*(G_N)))$. The next result will be used in Theorems
\ref{Ncong1} and \ref{Ncong3} to show that if $A=C^*(G_N)$ and $N$
is odd then
$$K(M(A))=K_s(M(A))=\frac{1}{2}{\rm Orc}(M(A))=\frac{N+1}{4}.$$

\begin{prop}\label{D-motion} Let $A=C^*(G_N)$ with $N$ odd. Then $D(A)\geq
\frac{N-1}{2}$.
\end{prop}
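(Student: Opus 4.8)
The plan is to work entirely inside the graph $(\Sub(A),*)$, combining the description of $\Sub(A)$ from Proposition~\ref{motion-sub} with the branching rule for $SO(2k+1)\downarrow SO(2k)$ recorded earlier. Write $N=2k+1$, so $k=\lfloor N/2\rfloor=(N-1)/2$; we may assume $N\geq3$ (the inequality $D(A)\geq 0$ being trivial for $N=1$), so $k\geq1$. The first step is to identify the edges of $(\Sub(A),*)$. By Proposition~\ref{motion-sub} the vertices of $\Sub(A)$ are the ideals $\ker\pi_{t,\sigma}$, which are separated points of $\Prim(A)$ and hence maximal ideals since $A/\ker\pi_{t,\sigma}\cong\K$, together with the ideals $I_{0,\sigma}$. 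Since a maximal ideal is the only primitive ideal lying above itself, and since ${\rm hull}(I_{0,\sigma})=\{\ker\pi:\pi\in SO(N)^\wedge,\ \pi|_{SO(N-1)}\geq\sigma\}\subseteq SO(N)^\wedge$ contains no $\ker\pi_{t,\rho}$, one checks that each $\ker\pi_{t,\sigma}$ is an isolated vertex of $(\Sub(A),*)$, and that for $\sigma,\sigma'\in SO(N-1)^\wedge$ one has $I_{0,\sigma}*I_{0,\sigma'}$ if and only if there is some $\pi\in SO(N)^\wedge$ whose restriction to $SO(N-1)$ contains both $\sigma$ and $\sigma'$ as subrepresentations (since $I_{0,\sigma}+I_{0,\sigma'}\neq A$ exactly when some primitive ideal lies above both, and such an ideal must be a $\ker\pi$ with $\pi\in SO(N)^\wedge$). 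In particular every walk in $(\Sub(A),*)$ joining two ideals of the form $I_{0,\cdot}$ passes only through ideals of that form.

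The key step is a depth estimate. For $\sigma=(q_1,\dots,q_k)\in SO(2k)^\wedge$ put $\ell(\sigma)=\max\{j:q_j\neq0\}$, with $\ell(\sigma)=0$ when $\sigma$ is trivial. I claim that $I_{0,\sigma}*I_{0,\sigma'}$ forces $|\ell(\sigma)-\ell(\sigma')|\leq1$. To see this, choose $\pi=(m_1,\dots,m_k)\in SO(2k+1)^\wedge$ whose restriction to $SO(2k)$ contains both $\sigma$ and $\sigma'$; the branching rule gives $m_j\geq q_j\geq m_{j+1}$ for $1\leq j\leq k-1$ and $m_k\geq|q_k|$, and likewise with $\sigma$ replaced by $\sigma'$. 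Set $i=\ell(\sigma)$, so that $q_{i+1}=\dots=q_k=0$. If $i\leq k-2$, then for $i+1\leq j\leq k-1$ we get $m_{j+1}\leq q_j=0$, hence $m_{j+1}=0$; that is, $m_j=0$ for $i+2\leq j\leq k$, and feeding this into the interlacing inequalities for $\sigma'$ gives $q'_j=0$ for $i+2\leq j\leq k$, that is, $\ell(\sigma')\leq i+1$. If $i\geq k-1$ then $\ell(\sigma')\leq k\leq i+1$ trivially. So $\ell(\sigma')\leq\ell(\sigma)+1$, and by symmetry $|\ell(\sigma)-\ell(\sigma')|\leq1$ whenever $I_{0,\sigma}*I_{0,\sigma'}$.

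To finish, take $\tau=(0,\dots,0)$ and $\sigma^{*}=(1,\dots,1)$ in $SO(2k)^\wedge$, so that $\ell(\tau)=0$ and $\ell(\sigma^{*})=k$; the ideal $I_{0,\sigma^{*}}$ genuinely belongs to $\Sub(A)$, since, for instance, $(1,\dots,1)\in SO(2k+1)^\wedge$ restricts to a sum containing $(1,\dots,1)\in SO(2k)^\wedge$. Both $I_{0,\tau}$ and $I_{0,\sigma^{*}}$ contain the Glimm ideal $I_0$, and $\{P\in\Prim(A):P\supseteq I_0\}=\{\ker\pi:\pi\in SO(N)^\wedge\}$ is $\sim$-connected by Theorem~\ref{Orc(G_N)}(i); by the equivalence noted earlier between the $\sim$-connectedness of $\{P\in\Prim(A):P\supseteq G\}$ and the $*$-connectedness of $\{J\in\Sub(A):J\supseteq G\}$, the set $\{J\in\Sub(A):J\supseteq I_0\}=\{I_{0,\sigma}:\sigma\in SO(N-1)^\wedge\}$ is $*$-connected, so $I_{0,\tau}$ and $I_{0,\sigma^{*}}$ lie in the same $*$-component of $\Sub(A)$. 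By the first paragraph, any walk between them has the form $I_{0,\sigma_0}*\dots*I_{0,\sigma_m}$ with $\sigma_0=\tau$ and $\sigma_m=\sigma^{*}$, and the depth estimate then forces $m\geq|\ell(\sigma_m)-\ell(\sigma_0)|=k$. Hence $d^{*}(I_{0,\tau},I_{0,\sigma^{*}})\geq k$, the diameter of that $*$-component is at least $k$, and therefore $D(A)\geq k=(N-1)/2$. (One could instead avoid citing the connectedness equivalence by writing down the explicit length-$k$ walk $I_{0,\sigma_0}*\dots*I_{0,\sigma_k}$ with $\sigma_j=(1,\dots,1,0,\dots,0)$ having exactly $j$ ones, each edge $I_{0,\sigma_{j-1}}*I_{0,\sigma_j}$ being witnessed by $\pi_j=(1,\dots,1,0,\dots,0)\in SO(2k+1)^\wedge$ with $j$ ones.)

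The main obstacle is the depth estimate: one has to track carefully how the vanishing of the tail $q_{i+1}=\dots=q_k=0$ propagates through the interlacing inequalities to force $m_{i+2}=\dots=m_k=0$, and then back out to $q'_{i+2}=\dots=q'_k=0$, making sure the shift is by exactly one and checking the boundary cases $i\geq k-1$ separately. A smaller but necessary point is the structural claim that a walk between two $I_{0,\cdot}$ vertices cannot be shortened by passing through the isolated vertices $\ker\pi_{t,\sigma}$ or through primitive ideals outside $SO(N)^\wedge$, which rests on each $\ker\pi_{t,\sigma}$ being a maximal ideal and on ${\rm hull}(I_{0,\sigma})$ being contained in $SO(N)^\wedge$.
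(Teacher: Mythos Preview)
Your proof is correct and follows essentially the same approach as the paper's: both arguments use the pair $I_{0,(0,\ldots,0)}$ and $I_{0,(1,\ldots,1)}$, show they lie in the same $*$-component, and prove the distance bound via the observation that one $*$-step can increase the position of the last nonzero entry of $\sigma$ by at most one (your function $\ell$). Your write-up is somewhat more explicit than the paper's---you name the invariant $\ell$, spell out why the vertices $\ker\pi_{t,\sigma}$ are isolated in $(\Sub(A),*)$, and your argument handles $k=1$ uniformly rather than by the paper's separate containment argument---but the mathematical content is the same.
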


\begin{proof} Let $N=2k+1$ and suppose first of all that $k>1$.
 Since $SO(N)^{\wedge}$ is a $\sim$-connected subset of
 $\widehat{G_N}$ (Theorem~\ref{Orc(G_N)}), the
 ideals
 $I_{0,(0,\ldots,0)}$ and $I_{0,(1,\ldots,1)}$
 belong to the same $*$-component of ${\rm Sub}(A)$ (in fact, it is
 easy to exhibit a $*$-walk between them via the
 ideals $I_{0,(1,\ldots,1,0\ldots,0)}$).

 Suppose that $0\leq
i\leq k-2$ and that $\sigma=(p_1, \ldots,p_k)$ and $\sigma'=(p'_1,
\ldots,p'_k)$ are elements of $SO(N-1)^{\wedge}$ such that
$I_{0,\sigma}*I_{0,\sigma'}$ and $p_j=0$ for $i<j\leq k$. Then there
exists $\pi\in SO(N)^{\wedge}$ such that $\pi|_{SO(N-1)}\geq \sigma$
and $\pi|_{SO(N-1)}\geq \sigma'$. So there exist integers $m_1\geq
m_2\geq\ldots\geq m_k\geq0$ such that
$$m_1\geq p_1\geq m_2\geq\ldots\geq p_i\geq m_{i+1}\geq0 \geq
m_{i+2}$$ and
$$m_1\geq p'_1\geq \ldots\geq m_{i+2}\geq p'_{i+2}\geq\ldots\geq
m_k\geq p'_k\geq -m_k.$$ Thus $m_{i+2}=\ldots=m_k=0$ and so
$p'_{i+2}=\ldots=p'_k=0$. It follows that
$$d^*(I_{0,(0,\ldots,0)},I_{0,(1,\ldots,1)})\geq k$$ and so $D(A)\geq
k$ as required.

  In the case $k=1$, consideration of the inequalities $m_1\geq 1\geq -m_1$ and
  $m_1\geq0\geq -m_1$ shows that the hull of $I_{0,(1)}$ is strictly contained
  in the hull of $I_{0,(0)}$. Thus $I_{0,(1)}$ strictly contains $I_{0,(0)}$
  and so $D(A)\neq0$.
  \end{proof}

\medskip

Our final two results are similar in nature to Theorems \ref{Ncong0}
and \ref{Ncong2} but the proofs differ from these in the following
respects. Firstly, since $N$ is now odd, the checking of the
hypotheses of Theorems \ref{M(A)and(n+1)} and \ref{M(A)and(n+3/2)}
is somewhat different and so we give the details. Secondly, we use
Proposition~\ref{D-motion} and Theorem~\ref{Orc(M(A))=D(A)+1} in
place of Corollary~\ref{Orc(M(A))motion}.

\begin{thm}\label{Ncong3} Let $A=C^*(G_N)$ where $N\equiv 3$ $({\rm mod}\ 4)$. Then
$$K(M(A))=K_s(M(A))=\frac{1}{2}{\rm Orc}(M(A))=\frac{N+1}{4}.$$
\end{thm}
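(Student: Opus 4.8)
The plan is to follow the template of Theorems~\ref{Ncong0} and~\ref{Ncong2}: establish the upper bound $K(M(A))\le \frac{N+1}{4}$ via Theorem~\ref{M(A)and(n+1)}, and the matching lower bound via the chain $\tfrac12\Orc(M(A))=K_s(M(A))\le K(M(A))$, where now the extra half‑unit in $\Orc(M(A))$ is supplied by $D(A)$ rather than by $\Orc(A)$. Write $N=2k+1$ with $k=\lfloor N/2\rfloor$; since $N\equiv 3\pmod 4$, $k$ is odd, say $k=2m+1$, so $\frac{N+1}{4}=m+1$.

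For the upper bound I would check that $A=C^*(G_N)$ satisfies the hypothesis of Theorem~\ref{M(A)and(n+1)} with $n=m$ (recall that $\Glimm(A)$ is normal, as $A$ is separable, and that $\phi_A$ is closed by Theorem~\ref{Orc(G_N)}(iii)). Let $G\in\Glimm(A)$ and $I^{(1)},I^{(2)},I^{(3)}\in\Sub(A)$ with $I^{(1)}\cap I^{(2)}\cap I^{(3)}\supseteq G$. If $G$ is one of the separated Glimm ideals $\ker\pi_{t,\sigma}$ then, being maximal, it forces $I^{(1)}=I^{(2)}=I^{(3)}=G$ and one takes $S^{(i)}=T=G$. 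Otherwise $G=I_0$ and, by Proposition~\ref{motion-sub}, $I^{(i)}=I_{0,\sigma_i}$ for signatures $\sigma_i=(p^{(i)}_1,\dots,p^{(i)}_k)\in\widehat{SO(N-1)}$ (when $k=1$ one first replaces $\sigma_i$ by its conjugate so that $p^{(i)}_1\ge0$; for $k\ge 2$ the entries $p^{(i)}_1,\dots,p^{(i)}_{k-1}$ are automatically nonnegative). Put $Q_j=\max\{p^{(1)}_j,p^{(2)}_j,p^{(3)}_j\}$ for $1\le j\le m+1$, and set
$$\rho=(Q_1,\dots,Q_{m+1},0,\dots,0)\in\widehat{SO(N)},\qquad \pi_i=(Q_1,p^{(i)}_1,\dots,p^{(i)}_{k-1})\in\widehat{SO(N)},$$
where $\rho$ carries $m$ trailing zeros. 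The branching rule $SO(2k+1)\downarrow SO(2k)$ shows $\pi_i|_{SO(N-1)}\ge\sigma_i$, hence $\ker\pi_i\supseteq I^{(i)}$. Moreover, exactly as in Theorem~\ref{Ncong0}, for $0\le j\le m-1$ the signatures
$$\pi_i^{(j)}=(Q_1,\dots,Q_{j+1},p^{(i)}_{j+1},\dots,p^{(i)}_{k-1-j},0,\dots,0)$$
satisfy $\pi_i^{(j)}\sim\pi_i^{(j+1)}$, because the restrictions of both to $SO(N-1)$ contain the common signature $(Q_1,\dots,Q_{j+1},\min\{p^{(i)}_{j+1},Q_{j+2}\},p^{(i)}_{j+2},\dots,p^{(i)}_{k-2-j},0,\dots,0)$. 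Since $k-1-m=m$, the $p$‑block has been exhausted after $m$ steps, so $\pi_i^{(m)}=\rho$ for every $i$; thus $d_A(\ker\pi_i,\ker\rho)\le m$, and taking $S^{(i)}=\ker\pi_i$, $T=\ker\rho$ verifies the hypothesis of Theorem~\ref{M(A)and(n+1)}. Hence $K(M(A))\le m+1$.

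For the lower bound I would use Proposition~\ref{D-motion}, which gives $D(A)\ge\frac{N-1}{2}=2m+1\ge1$; since $C^*(G_N)$ has properties (1) and (2) of Section~7, Theorem~\ref{Orc(M(A))=D(A)+1} applies and gives $\Orc(M(A))=D(A)+1\ge 2m+2$. Together with $K_s(M(A))=\tfrac12\Orc(M(A))$ (\cite[Theorem 4.4]{Som}, applied to the unital algebra $M(A)$) and $K_s(M(A))\le K(M(A))$, this yields
$$K(M(A))\le m+1\le\tfrac12\Orc(M(A))=K_s(M(A))\le K(M(A)),$$
so all these quantities equal $m+1=\frac{N+1}{4}$.

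I expect the main obstacle to be the purely combinatorial part of the second paragraph: verifying the adjacencies $\pi_i^{(j)}\sim\pi_i^{(j+1)}$ and the relations $\pi_i|_{SO(N-1)}\ge\sigma_i$ from the branching rules. This differs from the even case (Theorem~\ref{Ncong0}) in that $SO(2k+1)\downarrow SO(2k)$ preserves the number of signature entries rather than decreasing it, which forces the slightly different choice of $\pi_i$ (one leading $Q$ followed by a shifted copy of $\sigma_i$) and demands care with the sign constraint on the last entry of an $SO(2k)$‑signature and with the degenerate case $N=3$ (where $m=0$ and $\pi_i=\rho$ already). None of this is conceptually hard, but the bookkeeping must be done carefully.
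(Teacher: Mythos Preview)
Your proposal is correct and follows essentially the same route as the paper's own proof: both establish the upper bound by verifying the hypothesis of Theorem~\ref{M(A)and(n+1)} with $n=m$ via the same shifted-signature walk $\pi_i\sim\cdots\sim\rho=(Q_1,\ldots,Q_{m+1},0,\ldots,0)$, and both obtain the lower bound from Proposition~\ref{D-motion} together with Theorem~\ref{Orc(M(A))=D(A)+1}. The only cosmetic differences are that the paper treats $N=3$ separately (using $Q=\max\{|q|,|q'|,|q''|\}$ rather than your conjugate-replacement device, which amounts to the same thing since $I_{0,(q)}=I_{0,(-q)}$) and that the paper exhibits the simpler common subrepresentation $(Q_1,\ldots,Q_{j+1},p^{(i)}_{j+2},\ldots,p^{(i)}_{k-1-j},0,\ldots,0)$ in the walk, without the $\min$.
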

\begin{proof} Let $k=\frac{N-1}{2}$ and $m=\frac{N-3}{4}$ so that $N=2k+1$
and $k=2m+1$.  We begin with the case $m\neq0$ ($N\geq7$). We aim to
show that $A$ satisfies the hypotheses of Theorem~\ref{M(A)and(n+1)}
with $n=m$. Let $I^{(1)},I^{(2)},I^{(3)}\in \Sub(A)$ with
$I^{(1)},I^{(2)},I^{(3)}$ all containing the same Glimm ideal $G$.
If $G$ is one of the separated points of $\Prim(A)$ then
$I^{(1)}=I^{(2)}=I^{(3)}=G$ and we may take
$S^{(1)}=S^{(2)}=S^{(3)}=T=G$.

We may suppose therefore that $G=I_0$ (see Theorem~\ref{Orc(G_N)})
and that $I^{(1)}=I_{0, \sigma}$, $I^{(2)}=I_{0, \sigma'}$, and
$I^{(3)}=I_{0, \sigma''}$ for some $\sigma, \sigma', \sigma''\in
SO(N-1)^{\wedge}$ by Proposition~\ref{motion-sub}. Let $\sigma=(p_1,
\ldots, p_k)$, $\sigma'=(p'_1,\ldots, p'_k)$, and
$\sigma''=(p''_1,\ldots, p''_k)$. Set $P_i=\max\{ p_i, p'_i,
p''_i\}$ for $1\le i\le m+1$. Define $\pi= (P_1, p_1, p_2, \ldots,
p_{k-1})\in SO(N)^{\wedge}$ and similarly $\pi'$ and $\pi''$
(replacing the $p_j$ by $p_j'$ and $p_j''$ respectively). Since
$\pi|_{SO(N-1)} \geq \sigma$, $\ker\pi\supseteq I^1$. Similarly,
$\ker\pi'\supseteq I^2$ and $\ker\pi''\supseteq I^3$.

There is an $m$-step $\sim$-walk in $\widehat{G_N}$ from each of
$\pi$, $\pi'$, $\pi''$ to $\rho:=(P_1,P_2,\ldots , P_{m+1}, 0,\ldots
, 0) \in SO(N)^{\wedge}$. To see this, note that for $1\le i\le
m-1$,
$$(P_1, \ldots , P_i, p_i, p_{i+1}, \ldots , p_{k-i}, 0, \ldots, 0)\sim
(P_1, \ldots, P_i, P_{i+1}, p_{i+1}, \ldots, p_{k-1-i}, 0, \ldots ,
0)$$ because the restrictions to $SO(N-1)$ contain $(P_1,\ldots,
P_i, p_{i+1}, \ldots, p_{k-i}, 0,\ldots, 0),$ and finally
$$(P_1, \ldots, P_m, p_m, p_{m+1}, 0, \ldots, 0) \sim
(P_1, \ldots , P_{m+1}, 0, \ldots, 0)=\rho$$ because the
restrictions to $SO(N-1)$ contain $(P_1, \ldots, P_m, p_{m+1}, 0,
\ldots, 0)$. Similar arguments apply to $\pi'$ and $\pi''$,
replacing the $p_j$ by $p_j'$ and $p_j''$ (respectively).

Taking $S^{(1)}=\ker\pi$, $S^{(2)}=\ker\pi'$, $S^{(3)}=\ker\pi''$
and $T=\ker\rho$, we have satisfied the hypotheses of
Theorem~\ref{M(A)and(n+1)}. Thus $K(M(A))\le m+1=\frac{N+1}{4}$.
Combining this with Proposition~\ref{D-motion},
Theorem~\ref{Orc(M(A))=D(A)+1} and \cite[Theorem 4.4]{Som}, we have
$$K(M(A))\le \frac{N+1}{4}\leq \frac{D(A)+1}{2}
= \frac{{\rm Orc}(M(A))}{2}=K_s(M(A))\leq K(M(A))$$
 and hence equality throughout.

 For the simpler case $N=3$, it suffices to check the hypothesis of
 Theorem~\ref{M(A)and(n+1)} for $G=I_0$ (the only non-maximal Glimm ideal of $A$).
 So let
 $I^{(1)}=I_{0,(q)}$, $I^{(2)}=I_{0,(q')}$ and $I^{(3)}=I_{0,(q'')}$ where
 $(q),(q'),(q'')\in SO(2)^{\wedge}$. Let $Q=\max\{|q|,|q'|,|q''|\}$ and
 let $\pi=(Q)\in SO(3)^{\wedge}$. Then $\pi|_{SO(2)}$ contains
 $(q)$,$(q')$ and $(q'')$ and so $\ker\pi\supseteq I^{(1)}+I^{(2)}+I^{(3)}$. Using
 Theorem~\ref{M(A)and(n+1)} and arguing as above, we have
 $$K(M(A))\le 1\leq \frac{D(A)+1}{2}
= \frac{{\rm Orc}(M(A))}{2}=K_s(M(A))\leq K(M(A))$$
 and hence equality throughout.
 \end{proof}

\begin{thm}\label{Ncong1} Let $A=C^*(G_N)$ where $N\equiv 1$ $({\rm mod}\ 4)$ and
$N\geq5$. Then
$$K(M(A))=K_s(M(A))=\frac{1}{2}{\rm Orc}(M(A))=\frac{N+1}{4}.$$
\end{thm}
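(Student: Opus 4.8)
The plan is to bound $K(M(A))$ above by means of Theorem~\ref{M(A)and(n+3/2)} and to bound $\Orc(M(A))$ below by means of Proposition~\ref{D-motion} together with Theorem~\ref{Orc(M(A))=D(A)+1}, in the same spirit as the proof of Theorem~\ref{Ncong3}; the difference is that, since $\frac{N+1}{4}$ is now a half-integer, we must use the ``$n+\frac32$'' estimate of Theorem~\ref{M(A)and(n+3/2)} in place of the ``$n+1$'' estimate. Write $k=\frac{N-1}{2}$ and $m=\frac{N-1}{4}$, so that $N=2k+1$, $k=2m$ and $\frac{N+1}{4}=m+\frac12$. Since $N\ge 5\ge 3$, $A=C^*(G_N)$ is separable, $\Glimm(A)$ is normal, $\phi_A$ is closed (Theorem~\ref{Orc(G_N)}), and $A$ has properties~(1) and~(2); these facts make Theorems~\ref{M(A)and(n+3/2)} and~\ref{Orc(M(A))=D(A)+1} available.

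For the upper bound I would check the hypothesis of Theorem~\ref{M(A)and(n+3/2)} with $n=m-1$. Let $G\in\Glimm(A)$ and $I^{(1)},I^{(2)},I^{(3)}\in\Sub(A)$ all contain $G$. If $G$ is a separated point of $\Prim(A)$ then $I^{(1)}=I^{(2)}=I^{(3)}=G$ and one takes $S^{(i)}=T^{(i)}=G$. Otherwise, by Theorem~\ref{Orc(G_N)} and Proposition~\ref{motion-sub}, $G=I_0$ and $I^{(i)}=I_{0,\sigma^{(i)}}$ with $\sigma^{(i)}=(q_1^{(i)},\ldots,q_k^{(i)})\in SO(N-1)^\wedge$. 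Put $Q_j=\max_{1\le i\le 3}q_j^{(i)}$ for $1\le j\le m$ and $\pi^{(i)}=(Q_1,q_1^{(i)},q_2^{(i)},\ldots,q_{k-1}^{(i)})\in SO(N)^\wedge$; the branching rule $SO(2k+1)\downarrow SO(2k)$ shows $\pi^{(i)}|_{SO(N-1)}\ge\sigma^{(i)}$, so $\ker\pi^{(i)}\supseteq I^{(i)}$. Then, just as in the proofs of Theorems~\ref{Ncong0} and~\ref{Ncong2}, I would construct an $(m-1)$-step $\sim$-walk from $\pi^{(i)}$ to
$$\rho^{(i)}:=(Q_1,\ldots,Q_m,q_m^{(i)},0,\ldots,0)\in SO(N)^\wedge,$$
each step promoting one more low-index $q$-coordinate to the corresponding $Q$ (and enlarging the zero-tail), being a valid $\sim$-edge because the relevant restrictions to $SO(N-1)$ share a common signature built from the $Q$'s and the surviving $q$'s. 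Finally the restrictions of $\rho^{(1)},\rho^{(2)},\rho^{(3)}$ to $SO(N-1)$ all contain $\mu:=(Q_1,\ldots,Q_m,0,\ldots,0)$, so for any null sequence $(t_n)$ in $(0,\infty)$ one has $\pi_{t_n,\mu}\to\rho^{(1)},\rho^{(2)},\rho^{(3)}$ in $\widehat{G_N}$, whence $\ker\rho^{(1)}\cap\ker\rho^{(2)}\cap\ker\rho^{(3)}$ is primal. With $S^{(i)}=\ker\pi^{(i)}$ and $T^{(i)}=\ker\rho^{(i)}$, Theorem~\ref{M(A)and(n+3/2)} then yields $K(M(A))\le(m-1)+\frac32=\frac{N+1}{4}$. (When $m=1$, i.e.\ $N=5$, the $\sim$-walk is empty, $\rho^{(i)}=\pi^{(i)}$, and one needs only that $\ker\pi^{(1)}\cap\ker\pi^{(2)}\cap\ker\pi^{(3)}$ is primal, via the common subrepresentation $(Q_1,0)$, exactly as for $N=6$ in Theorem~\ref{Ncong2}.)

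For the reverse inequality, Proposition~\ref{D-motion} gives $D(A)\ge\frac{N-1}{2}\ge 2$, so Theorem~\ref{Orc(M(A))=D(A)+1} applies and $\Orc(M(A))=D(A)+1\ge\frac{N+1}{2}$. Combining this with the identity $K_s(M(A))=\frac12\Orc(M(A))$ recorded in Section~2 (an application of \cite[Theorem 4.4]{Som} to the unital algebra $M(A)$) and the trivial bound $K_s(M(A))\le K(M(A))$, we obtain
$$K(M(A))\le\frac{N+1}{4}\le\frac{D(A)+1}{2}=\frac{\Orc(M(A))}{2}=K_s(M(A))\le K(M(A)),$$
so all these quantities coincide and $K(M(A))=K_s(M(A))=\frac12\Orc(M(A))=\frac{N+1}{4}$.

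I expect the main obstacle to be the combinatorial bookkeeping behind the $\sim$-walk: one has to keep careful control of the lengths and of the positions of the zero-tails of the signatures at each stage so that the defining inequalities $m_i\ge p_i\ge m_{i+1}$ ($1\le i\le k-1$) and $m_k\ge p_k\ge-m_k$ of the $SO(2k+1)\downarrow SO(2k)$ branching hold for the claimed common signatures, and so that the walk terminates precisely at $\rho^{(i)}$ after exactly $m-1$ steps. This is parallel to, though a little more delicate than, the corresponding computations for $N$ even in Theorems~\ref{Ncong0} and~\ref{Ncong2}, the extra care being forced by the last $SO(N)$-coordinate and the sign constraint $m_k\ge p_k\ge-m_k$ peculiar to the odd-to-even branching.
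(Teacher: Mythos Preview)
Your proposal is correct and follows essentially the same strategy as the paper: verify the hypothesis of Theorem~\ref{M(A)and(n+3/2)} with $n=m-1$ (handling the separated Glimm ideals trivially and the ideal $I_0$ via the explicit $\sim$-walk on signatures), then combine the resulting upper bound with Proposition~\ref{D-motion} and Theorem~\ref{Orc(M(A))=D(A)+1} to force equality. The only cosmetic difference is that your endpoint $\rho^{(i)}=(Q_1,\ldots,Q_m,q_m^{(i)},0,\ldots,0)$ carries $q_m^{(i)}$ in the $(m+1)$-st slot whereas the paper records $p_{m+1}$ there; your version is the one actually produced by the walk construction, and in either case the restriction to $SO(N-1)$ contains $(Q_1,\ldots,Q_m,0,\ldots,0)$, so the primality of $\bigcap_i\ker\rho^{(i)}$ follows just the same.
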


\begin{proof} Let $k=\frac{N-1}{2}$ and $m=\frac{N-1}{4}$, so that $N=2k+1$
and $k=2m$. We begin with the case $N\geq9$ so that $m>1$. We aim to
show that $A$ satisfies the hypotheses of
Theorem~\ref{M(A)and(n+3/2)} with $n=m-1$. Let
$I^{(1)},I^{(2)},I^{(3)}\in \Sub(A)$ with $I^{(1)},I^{(2)},I^{(3)}$
all containing the same Glimm ideal $G$. If $G$ is one of the
separated points of $\Prim(A)$ then $I^{(1)}=I^{(2)}=I^{(3)}=G$ and
we may take $S^{(i)}=T^{(i)}=G$ $(1\leq i\leq3)$ so that
$T^{(1)}\cap T^{(2)}\cap T^{(3)}=G$ which is primal.

We may suppose therefore that $G=I_0$ (see Theorem~\ref{Orc(G_N)})
and that $I^{(1)}=I_{0, \sigma}$, $I^{(2)}=I_{0, \sigma'}$, and
$I^{(3)}=I_{0, \sigma''}$ for some $\sigma, \sigma', \sigma''\in
SO(N-1)^{\wedge}$ by Proposition~\ref{motion-sub}. Let $\sigma=(p_1,
\ldots, p_k)$, $\sigma'=(p'_1,\ldots, p'_k)$, and
$\sigma''=(p''_1,\ldots, p''_k)$. Set $P_i=\max\{ p_i, p'_i,
p''_i\}$ for $1\le i\le m$. Define $\pi= (P_1, p_1, p_2, \ldots,
p_{k-1})\in SO(N)^{\wedge}$ and similarly $\pi'$ and $\pi''$
(replacing the $p_j$ by $p_j'$ and $p_j''$ respectively). Since
$\pi|_{SO(N-1)} \geq \sigma$, $\ker\pi\supseteq I^{(1)}$. Similarly,
$\ker\pi'\supseteq I^{(2)}$ and $\ker\pi''\supseteq I^{(3)}$.

There is an $(m-1)$-step $\sim$-walk in $\widehat{G_N}$ from $\pi$
to $\rho:=(P_1,\ldots , P_m,p_{m+1}, 0,\ldots , 0)\in
SO(N)^{\wedge}$. To see this, note that for $1\le i\le m-1$,
$$(P_1, \ldots , P_i, p_i, \ldots , p_{k-i}, 0, \ldots, 0)\sim
(P_1, \ldots, P_i, P_{i+1}, p_{i+1}, \ldots, p_{k-1-i}, 0, \ldots ,
0)$$ because the restrictions to $SO(N-1)$ contain $(P_1,\ldots,
P_i, p_{i+1}, \ldots, p_{k-i}, 0,\ldots, 0)$. Similarly, there is an
$(m-1)$-step $\sim$-walk from $\pi'$ to $\rho':=(P_1,\ldots ,
P_m,p'_{m+1}, 0,\ldots , 0)$ and from $\pi''$ to
$\rho'':=(P_1,\ldots , P_m,p''_{m+1}, 0,\ldots , 0)$.

  The restrictions of $\rho$, $\rho'$ and $\rho''$ to
  $SO(N-1)$ contain $(P_1,\ldots,P_m,0,\ldots,0)$ as a
  common subrepresentation and so $\ker\rho\cap\ker\rho'\cap\ker\rho''$ is a primal ideal of
  $A$, as in the proof of Theorem~\ref{Ncong2}.

Taking $S^{(1)}=\ker\pi$, $S^{(2)}=\ker\pi'$, $S^{(3)}=\ker\pi''$
and
 $T^{(1)}=\ker\rho$, $T^{(2)}=\ker\rho'$, $T^{(3)}=\ker\rho''$, we have satisfied
 the hypotheses of Theorem~\ref{M(A)and(n+3/2)}. Thus $K(M(A))\le
(m-1)+3/2=\frac{N+1}{4}$. Combining this with
Proposition~\ref{D-motion}, Theorem~\ref{Orc(M(A))=D(A)+1} (see the
remark preceding Lemma~\ref{lemma-for-(1)and(2)}) and \cite[Theorem
4.4]{Som}, we have
$$K(M(A))\le \frac{N+1}{4}\leq \frac{D(A)+1}{2}
= \frac{{\rm Orc}(M(A))}{2}=K_s(M(A))\leq K(M(A))$$
 and hence equality throughout.

 For the simpler case $N=5$, it again suffices to check the hypothesis of
 Theorem~\ref{M(A)and(n+3/2)}
 for the Glimm ideal  $G=I_0$. With notation as above, we have that
 the restrictions to $SO(4)$ of $\pi=(P_1,p_1,)$,
 $\pi'=(P_1,p'_1,)$ and $\pi''=(P_1,p''_1,)$ contain $(P_1,0)$ as a
 common subrepresentation and so the ideal $\ker\pi\cap\ker\pi'\cap\ker\pi''$
 is primal. Applying Theorem~\ref{M(A)and(n+3/2)} in the case $n=0$, we have
$$K(M(A))\le \frac{3}{2}\leq \frac{D(A)+1}{2}
=\frac{{\rm Orc}(M(A))}{2}=K_s(M(A))\leq K(M(A))$$
 and hence equality throughout.
 \end{proof}

In the case when $N$ is odd, it can be shown by direct arguments
that $D(A)\leq \frac{N-1}{2}$ (so that $D(A)= \frac{N-1}{2}$).
However, this inequality can also be obtained indirectly from
Theorem~\ref{Orc(M(A))=D(A)+1} and the fact that ${\rm
Orc}(M(A))=\frac{N+1}{2}$ (Theorems \ref{Ncong1}
 and \ref{Ncong3}). In the case when
$N$ is even, direct arguments show that $D(A)=\frac{N}{2}-1$. It
follows that Theorems \ref{Ncong0} and \ref{Ncong2} could be proved
by using Theorem~\ref{Orc(M(A))=D(A)+1} rather than the more
elementary Corollary~\ref{Orc(M(A))motion}. However, since
Theorem~\ref{Orc(M(A))=D(A)+1} uses
Theorem~\ref{Orc(M(A))geqOrc(A)}, no saving would be gained by
adopting this more complicated approach.

\bigskip
\bigskip

\end{document}